\definecolor{darkgreen}{rgb}{0,0.5,0}
\newtheorem{theorem}{Theorem}[section]
\newtheorem{lemma}[theorem]{Lemma}
\newtheorem{proposition}[theorem]{Proposition}
\newtheorem{corollary}[theorem]{Corollary}
\theoremstyle{definition}
\newtheorem{remark}[theorem]{Remark}
\newtheorem{definition}[theorem]{Definition}
\newtheorem{example}[theorem]{Example}
\def\im{\text{im }}
\def\spinc{\text{spin}^c}
\newcommand{\Z}{\mathbb{Z}}
\newcommand{\C}{\mathbb{C}}
\newcommand{\RP}{\mathbb{R} \mathbb{P}}
\newcommand{\Char}{\text{Char}}
\newcommand{\Hom}{\text{Hom}}
\newcommand{\CPr}{\mathbb{CP}}
\def\HF {\mathit{HF}}
\newcommand\HFhat{\widehat{\mathit\HF}}
\newcommand\HFp {\HF^+}
\newcommand\HFpe {\HF^+_\text{even}}
\newcommand\HFpo {\HF^+_\text{odd}}
\newcommand{\Hl}{\mathbb{H}}
\newcommand{\Hlb}{\overline{\mathbb{H}}}
\newcommand{\Is}{\smash{I^\#}}
\newcommand{\Iseven}{\smash{I^\#_\text{even}}}
\newcommand{\Isodd}{\smash{I^\#_\text{odd}}}
\newcommand{\Ts}{\smash{T^\#}}
\newcommand{\Tsb}{\smash{\bar{T}^\#}}
\newcommand{\ev}[2]{#1(#2)}
\begin{document}
\title[]{Instanton Floer homology of almost-rational plumbings}
\author{Antonio Alfieri, John A. Baldwin, Irving Dai, and Steven Sivek }
\begin{abstract}
We show that if $Y$ is the boundary of an almost-rational plumbing, then the framed instanton Floer homology $\smash{I^\#(Y)}$ is isomorphic to the Heegaard Floer homology $\smash{\widehat{\mathit{HF}}(Y; \C)}$. This class of 3-manifolds includes all Seifert fibered rational homology spheres with base orbifold $S^2$ (we establish the isomorphism for the remaining Seifert fibered rational homology spheres---with base  $\mathbb{RP}^2$---directly). Our proof utilizes lattice homology, and relies on a decomposition theorem for instanton Floer cobordism maps recently established by Baldwin and Sivek.   
\end{abstract}
\maketitle 

\section{Introduction}\label{sec:1}

Instanton Floer homology is a gauge-theoretic invariant which has featured prominently in many important topological results, including the proof of the Property~P conjecture for knots \cite{KMpropertyP}, and the proof that Khovanov homology detects the unknot \cite{KMkhovanov}. An active area of research involves developing techniques for computing  instanton Floer homology, and using these to explore  relationships between the instanton theory and other Floer homological invariants. 

In this paper, we study a version of instanton Floer homology called \textit{framed instanton homology}, introduced by Kronheimer and Mrowka \cite{KMknothomology} and further developed by Scaduto \cite{Scaduto}. A conjecture of Kronheimer and Mrowka \cite[Conjecture 7.24]{KMknots} posits that the framed instanton homology $\Is(Y)$ of a closed $3$-manifold $Y$ is isomorphic to the Heegaard Floer homology $\smash{\widehat{\mathit{HF}}(Y; \C)}$. Thus far, framed instanton  homology has  been computed (and this conjecture verified) for: connected sums of $S^1 \times S^2$ \cite{Scaduto}, branched double covers of two-fold quasi-alternating links \cite{Scaduto, ScadutoStoffregen}, some Brieskorn spheres \cite{Scaduto}, a surface times a circle \cite{ChenScaduto}, various Dehn surgeries on knots \cite{BSLspace, LPCS, BSconcordance}, and some small-volume hyperbolic manifolds \cite{BSconcordance}.

The aim of our work is to substantially expand the class of 3-manifolds for which the framed instanton homology is known, by establishing (under favorable circumstances) a relationship between $\Is$ and \emph{lattice homology}. The latter is a combinatorial invariant of plumbed 3-manifolds,\footnote{Associated to  \emph{negative-definite} plumbings.} introduced by Ozsv\'ath and Szab\'o \cite{OSplumbed} and elaborated considerably by N\'emethi \cite{Nemethi}, which is conjecturally isomorphic to Heegaard Floer homology. This conjecture has been verified for the class of \emph{almost-rational} plumbings, which includes\footnote{Up to orientation reversal.} all Seifert fibered rational homology spheres with base  $S^2$.  Our main result is that a version of lattice homology is likewise isomorphic to framed instanton homology in even degree for almost-rational plumbings. As a corollary, we prove that Kronheimer and Mrowka's conjectured isomorphism \[\smash{\Is(Y) \cong \HFhat(Y; \C)}\] holds for  such plumbings.  We note that almost-rational plumbings and lattice homology generally are also interesting from the viewpoint of algebraic geometry; see \cite{Nemethi}.

Our approach is modeled after  analogous arguments in the Heegaard Floer setting,  in \cite{OSplumbed, Nemethi}. However, there are several additional challenges in the instanton Floer setting. First, the instanton groups and cobordism maps come equipped with decorations which must be taken into account---namely, multicurves and surfaces which specify bundles over the respective manifolds (see Section~\ref{sec:2.1}). Second, we need to establish ``adjunction relations" for framed instanton homology, analogous to those in Heegaard Floer homology \cite{OSplumbed}. Our proof of these adjunction relations uses  Baldwin and Sivek's recent decomposition result for cobordism maps in framed instanton homology \cite{BSLspace}. Third, there are standard properties of $\HF^+$, $\HF^-$, and $\HF^\infty$ which play key roles in the Heegaard Floer argument, whose analogues in framed instanton  homology have not yet been established. Hence, considerable care and some new results are needed when adapting the arguments of \cite{OSplumbed,Nemethi} to our setting.

We describe our main results more precisely below.

\subsection{Statement of results}\label{sec:1.1}
Suppose $\Gamma$ is a negative-definite plumbing forest, and let $Y_\Gamma$ denote the boundary of the associated plumbed 4-manifold.  In Section~\ref{sec:3.1}, we define a complex vector space $\Hl(\Gamma)$ called the \textit{lattice homology of $\Gamma$}.  We will be particularly interested in the case where $\Gamma$ is \emph{almost-rational}  (see Section~\ref{sec:2.4}), as indicated in our main theorem:

\begin{theorem}\label{thm:1.1}
If $\Gamma$ is  almost-rational, then  \[\Hl(\Gamma) \cong \Iseven(Y_\Gamma)\subset \Is(Y_\Gamma),\] where $\Iseven(Y_\Gamma)$ refers to the summand of $\Is(Y_\Gamma)$ in even $\Z/2\Z$-grading.
\end{theorem}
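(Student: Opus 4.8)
The plan is to follow the structure of the Ozsváth–Szabó and Némethi arguments in the Heegaard Floer setting, working with the framed instanton invariant $\Is$ in place of $\HFhat$ and building up the ``large-surgery'' package along the way. First I would introduce minus-type and plus-type analogues of framed instanton homology—call them $\HIm(Y)$ and an associated $\HF^+$-style group—sitting in an exact triangle with $\Is(Y)$, so that $\Iseven(Y_\Gamma)$ can be accessed through a tower-plus-reduced-part decomposition mimicking the standard $\HFm = \mathbb{T}^-_d \oplus \HFred$ picture. The link $Y_\Gamma = \partial X_\Gamma$ bounds the negative-definite plumbing $X_\Gamma$, and the core of the argument is to realize $\HIm(Y_\Gamma)$ (in the relevant grading) as the homology of a combinatorial complex built from the characteristic vectors of the intersection lattice of $X_\Gamma$, exactly as lattice homology $\Hl(\Gamma)$ is defined in Section~\ref{sec:3.1}. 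Concretely, one filters $X_\Gamma$ by blowing down/handle-attaching one vertex at a time, and uses the cobordism maps on $\Is$ induced by the two-handle cobordisms, organized over $\Spinc$ (equivalently $\Char$) structures.

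The key steps, in order: \textbf{(1)} Establish the ``adjunction relations'' for $\Is$-cobordism maps associated to a two-handle attachment along an unknot linking a plumbing vertex—i.e.\ the instanton analogue of the $\HF$ relations in \cite{OSplumbed} that make the lattice complex's differential appear. As the introduction flags, this is where Baldwin–Sivek's decomposition theorem for framed instanton cobordism maps \cite{BSLspace} does the work: it lets one split the cobordism map over the decorations (multicurves/surfaces) and match terms with the lattice differential. \textbf{(2)} Prove the large-surgery / ``$V_d$-type'' statement: for $\Gamma$ with a bad vertex made good by a single blow-down (the almost-rational hypothesis), the minus-flavored instanton homology of $Y_\Gamma$ computes, grading by grading, the homology of Némethi's lattice complex. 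Here the almost-rational condition is used exactly as in \cite{Nemethi}: there is a distinguished vertex $v_0$ such that replacing its weight by any sufficiently negative integer yields a rational plumbing, which collapses the lattice complex to a computable ``graded root,'' and one runs a surgery-exact-triangle induction on the amount of blow-up at $v_0$. \textbf{(3)} Pin down the $\U$-tower: show the instanton analogue of $\HF^\infty$ (the ``image of $\U^N$'' part) is the expected single tower, so that the reduced part is what remains; combined with (2) this identifies $\Hl(\Gamma)$ with $\Iseven(Y_\Gamma)$, the even-graded summand, after checking the $\Z/2$-grading bookkeeping matches the parity of the lattice grading. \textbf{(4)} Finally, verify that the isomorphism is of the claimed form as a subspace of $\Is(Y_\Gamma)$, i.e.\ that the odd part contributes the complementary summand; this follows from the exact triangle relating $\HIm$ and $\Is$ together with the vanishing/structure results proved along the way.

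The main obstacle I expect is \textbf{Step (1)}, the adjunction relations. In Heegaard Floer these relations are near-formal consequences of the surgery exact triangle and the behavior of cobordism maps under blow-up; in the instanton setting the cobordism maps carry bundle data (the surfaces and multicurves of Section~\ref{sec:2.1}), and there is no off-the-shelf ``$\HF^\infty$-type'' input to control the maps. So one must genuinely extract the needed relations from the Baldwin–Sivek decomposition, track how the decorations transform under each two-handle attachment and blow-up, and check the signs/gradings line up—this is the delicate technical heart of the paper. A secondary difficulty is that several standard structural properties of $\HFm$, $\HFp$, $\HFinf$ (exactness, the tower, finiteness of the reduced part) have to be re-proved for their instanton counterparts before the lattice-theoretic bookkeeping of \cite{Nemethi} can even be set up; these are not conceptually hard but require care, as the introduction itself warns.
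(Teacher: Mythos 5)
Your Step (1)---establishing the adjunction relations via Baldwin--Sivek's decomposition theorem and tracking decorations carefully through two-handle attachments and blow-ups---is exactly what the paper does (Section~\ref{sec:4}, Theorem~\ref{thm:1.5}), and you have correctly identified it as the delicate technical heart. But the overall architecture of your Steps (2)--(4) diverges sharply from the paper, and that divergence hides a genuine gap.

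You propose to introduce minus- and plus-flavored analogues of $\Is$, fit them into an exact triangle, extract a tower-plus-reduced-part decomposition mimicking $\HFm \cong \mathbb{T}^-_d \oplus \HFred$, and pin down an instanton $\HFinf$. You describe re-proving these structural properties as a ``secondary difficulty'' that is ``not conceptually hard but require[s] care.'' This misjudges the state of the field: no such minus/plus/infinity flavors of framed instanton homology with a $U$-tower structure are known to exist, and constructing them would be a major undertaking, not bookkeeping. The paper itself flags precisely this as the reason the Heegaard Floer argument cannot be transplanted directly. The actual strategy sidesteps the problem entirely. Lattice homology $\Hl(\Gamma)$ is defined (Definition~\ref{def:3.1}) as a finite-dimensional \emph{quotient} of the $\C$-span of characteristic vectors by the two adjunction-type relations---a ``hat-level'' object with no $U$-action. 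One then defines a direct map $\Tsb$ to $\Is(Y_\Gamma,\lambda_\Gamma)$ by evaluating cobordism maps from $S^3$ over characteristic elements, shows it descends (Proposition~\ref{prop:3.9}) using the adjunction relations, constructs a lattice surgery exact sequence $\Hl(\Gamma-v) \to \Hl(\Gamma) \to \Hl(\Gamma_{+1}) \to 0$ (Section~\ref{sec:5}), verifies it commutes with the instanton surgery triangle (Lemma~\ref{lem:5.3}), and runs a five-lemma induction: first for $0$-bad-vertex plumbings (Lemma~\ref{lem:6.3}), then one bad vertex (Lemma~\ref{lem:1bad}), then rational plumbings using N\'emethi's computation $\mathbb{H}^+(\Gamma,[k_0]) \cong \Z[U,U^{-1}]/\Z[U]$ as the new base case (Lemma~\ref{lem:7.2}), and finally almost-rational plumbings. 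N\'emethi's results enter only through the identification $\Hl(\Gamma;\Z) \cong \ker U \subset \mathbb{H}^+(\Gamma)$ (Lemma~\ref{lem:isom}) and the dimension count for rational graphs; no large-surgery formula, no tower, and no $\HFinf$-type input is ever needed on the instanton side. Without a concrete construction of the flavored theories you posit, your Steps (2)--(4) as written cannot be carried out.
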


The isomorphism of Theorem~\ref{thm:1.1} is effected by an explicit map, defined in Section~\ref{sec:3.2}. 

Scaduto proved in \cite[Corollary 1.4]{Scaduto} that for any rational homology sphere $Y$, the  Euler characteristic of $\Is(Y)$ is given by
\begin{equation}\label{eq:1.1}
\chi(\Is(Y)):= \dim \Iseven(Y) - \dim \Isodd(Y)= |H_1(Y)|.
\end{equation}
Therefore, Theorem~\ref{thm:1.1} suffices to compute the \emph{entire} framed instanton homology group $\Is(Y_\Gamma)$ as a $\Z/2\Z$-graded complex vector space.  An isomorphism similar to that in Theorem \ref{thm:1.1} holds in the Heegaard Floer setting, due to Ozsv\'ath--Szab\'o \cite[Theorem 1.2]{OSplumbed} and N\'emethi \cite[Theorem 5.2.2]{Nemethi}. By comparing the lattice homology theory defined here with that in the Heegaard Floer setting, we  then obtain the following:

\begin{corollary}\label{cor:1.2}
If $\Gamma$ is almost-rational, then there is an isomorphism 
\[
\Is(Y_\Gamma) \cong \HFhat(Y_\Gamma; \C)
\]
of $\Z/2\Z$-graded complex vector spaces.
\end{corollary}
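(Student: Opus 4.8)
The plan is to deduce the corollary by chaining Theorem~\ref{thm:1.1} with the analogous result on the Heegaard Floer side, and then pinning down the odd-graded summands using Euler characteristics. First I would recall the Ozsv\'ath--Szab\'o/N\'emethi theorem that, for an almost-rational plumbing forest $\Gamma$, there is an isomorphism between (the even part of) the Heegaard Floer lattice homology and $\HFhat(Y_\Gamma)$; concretely, the lattice homology computes $\HF^+(Y_\Gamma)$ from which $\HFhat(Y_\Gamma)$ is recovered. The key point I must address is that the complex vector space $\Hl(\Gamma)$ defined in Section~\ref{sec:3.1} of this paper agrees (after tensoring with $\C$, or by definition over $\C$) with the lattice homology appearing in \cite{OSplumbed, Nemethi}: both are built from the same combinatorial data attached to $\Gamma$ (the weighted graph, the characteristic vectors, and the $\Spinc$-decomposition), so the comparison is essentially a matter of matching definitions rather than new geometry. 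Granting this, Theorem~\ref{thm:1.1} gives $\Iseven(Y_\Gamma) \cong \Hl(\Gamma)$, and the Heegaard Floer input gives $\Hl(\Gamma) \cong \HFhat_{\mathrm{even}}(Y_\Gamma;\C)$ (the summand of $\HFhat$ in even $\Z/2\Z$-grading, under the usual convention that the lattice homology sits in the even part), so the even-graded pieces of the two Floer theories are isomorphic.

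Next I would handle the odd-graded summands. On the instanton side, Scaduto's formula \eqref{eq:1.1} gives $\dim\Isodd(Y_\Gamma) = \dim\Iseven(Y_\Gamma) - |H_1(Y_\Gamma)|$. On the Heegaard Floer side, the analogous Euler characteristic computation (Ozsv\'ath--Szab\'o) gives $\chi(\HFhat(Y_\Gamma)) = |H_1(Y_\Gamma)|$ for a rational homology sphere, so $\dim\HFhat_{\mathrm{odd}}(Y_\Gamma;\C) = \dim\HFhat_{\mathrm{even}}(Y_\Gamma;\C) - |H_1(Y_\Gamma)|$. Combining these with the already-established isomorphism of even parts, we get $\dim\Isodd(Y_\Gamma) = \dim\HFhat_{\mathrm{odd}}(Y_\Gamma;\C)$, and hence $\Is(Y_\Gamma) \cong \HFhat(Y_\Gamma;\C)$ as $\Z/2\Z$-graded complex vector spaces. (One should note that $|H_1(Y_\Gamma)|$ is finite here since an almost-rational plumbing forest is in particular negative-definite, so $Y_\Gamma$ is a rational homology sphere; this is needed both for \eqref{eq:1.1} to apply and for the Heegaard Floer Euler characteristic statement.)

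The main obstacle I anticipate is the identification of the lattice homology $\Hl(\Gamma)$ defined in this paper with the one in \cite{OSplumbed, Nemethi}. The potential friction is bookkeeping: the two sources may use different normalizations for gradings, different conventions for the Spin$^c$ structures, or work over $\ZZ$ versus $\F$ versus $\C$, and one must check that passing to $\C$-coefficients introduces no torsion subtleties (none is expected, since lattice homology of an almost-rational graph is known to be torsion-free in the appropriate sense) and that the $\Z/2\Z$-grading conventions line up so that ``even lattice homology'' corresponds to ``even $\HFhat$'' on both sides. Once this dictionary is in place, the rest of the argument is the short Euler-characteristic bookkeeping described above, and I would expect Section~\ref{sec:3.1} to have been written precisely so that this comparison is immediate.
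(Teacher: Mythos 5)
Your proposal follows essentially the same route as the paper: Theorem~\ref{thm:1.1} identifies $\Iseven(Y_\Gamma)$ with $\Hl(\Gamma)$, the lattice-homology comparison gives $\Hl(\Gamma)\cong\HFhat_{\mathrm{even}}(Y_\Gamma;\C)$, and the Euler characteristic formula of Scaduto (together with $\chi(\HFhat)=|H_1|$) pins down the odd-graded summands. The one step you (rightly) flag as needing care---matching $\Hl(\Gamma)$ with the Heegaard Floer lattice theory---is indeed where the paper spends its effort: it proves in Lemma~\ref{lem:isom} that $\Hl(\Gamma;\Z)\cong\ker U\subset\mathbb{H}^+(\Gamma)$ after reconciling sign conventions, invokes N\'emethi's Theorem~\ref{thm:7.1} to identify $\C\otimes\mathbb{H}^+(\Gamma)$ with $\HF^+(-Y_\Gamma;\C)$, and then runs the $\HFhat\to\HFp\xrightarrow{U}\HFp$ long exact sequence to extract $\HFhat_{\mathrm{even}}$; so the ``matching of definitions'' is a real lemma rather than an immediate consequence of the construction, but your outline correctly anticipates its content.
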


As noted previously, the class of $3$-manifolds specified by almost-rational plumbings  includes all Seifert fibered rational homology spheres with base  $S^2$ (after possibly reversing orientation).  Every other Seifert fibered rational homology sphere has base  $\mathbb{RP}^2$, and we mimic an argument of Boyer--Gordon--Watson in the Heegaard Floer setting \cite[Proposition 18]{BGW} in this case to prove that each such manifold is an instanton L-space. We therefore have:

\begin{corollary}\label{cor:1.2a}
If $Y$ is a Seifert fibered rational homology sphere, then there is an isomorphism
\[
\Is(Y) \cong \HFhat(Y; \C)
\]
of $\Z/2\Z$-graded complex vector spaces.
\end{corollary}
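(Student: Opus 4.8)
The plan is to reduce Corollary~\ref{cor:1.2a} to Corollary~\ref{cor:1.2} for all Seifert fibered rational homology spheres whose base orbifold is $S^2$, and then to handle the remaining case---base orbifold $\RP^2$---separately by a direct argument. For the $S^2$-base case, I would first recall the standard fact (see \cite{Nemethi,OSplumbed}) that every Seifert fibered rational homology sphere with base $S^2$ is, up to orientation reversal, the boundary $Y_\Gamma$ of a negative-definite star-shaped plumbing forest $\Gamma$, and that any such $\Gamma$ is almost-rational. Since $\Is$ of a rational homology sphere is, as an ungraded (or $\Z/2\Z$-graded) complex vector space, insensitive to orientation reversal---because $\Is(-Y)\cong\Is(Y)^*$ and the two have the same total dimension and, by \eqref{eq:1.1}, the same Euler characteristic up to sign, with the even/odd summands swapping---and similarly $\HFhat(-Y;\C)\cong\HFhat(Y;\C)$ as $\Z/2\Z$-graded vector spaces, invoking Corollary~\ref{cor:1.2} for $\Gamma$ immediately yields $\Is(Y)\cong\HFhat(Y;\C)$ in this case. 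I would state the orientation-reversal symmetry carefully, since it is the one place where a small argument is needed rather than a citation.

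Next I would treat the case where the base orbifold is $\RP^2$. Here the strategy is to show that every such manifold is an \emph{instanton L-space}, meaning $\dim\Is(Y)=|H_1(Y;\Z)|$, which by \eqref{eq:1.1} forces $\Isodd(Y)=0$ and $\Iseven(Y)\cong\C^{|H_1(Y)|}$. On the Heegaard Floer side, Boyer--Gordon--Watson \cite[Proposition 18]{BGW} proved the analogous statement that these manifolds are Heegaard Floer L-spaces; in particular $\HFhat(Y;\C)\cong\C^{|H_1(Y)|}$ concentrated in even grading, so matching dimensions and gradings gives the desired isomorphism. The plan is to mimic their argument in the instanton setting: one exhibits $Y$ (or $-Y$) as obtained from a simpler manifold by a sequence of Dehn fillings/surgeries along curves for which the instanton surgery exact triangle applies, and runs an induction showing that the L-space condition propagates, using the known behavior of $\Is$ under surgery together with the computation of $\Is$ for the base cases (lens spaces, or prism manifolds built from them). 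The key inputs here are the instanton surgery exact triangle of Kronheimer--Mrowka/Scaduto and the fact that lens spaces are instanton L-spaces (already known, e.g.\ via \cite{Scaduto,BSLspace}).

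The main obstacle I anticipate is carrying out the Boyer--Gordon--Watson-style argument faithfully in instanton Floer homology. Their proof uses the structure of the fundamental group and the representation variety, together with surgery relations, to pin down the rank; translating each step requires an instanton analogue, and the surgery exact triangle only gives an inequality $\dim\Is(Y)\le \dim\Is(Y_0)+\dim\Is(Y_1)$ in general, so one must be careful to set up the induction so that equality is forced at each stage (e.g.\ by tracking Euler characteristics via \eqref{eq:1.1} and ensuring no cancellation). A secondary subtlety is bookkeeping the $\Z/2\Z$-grading throughout the surgery triangle so that the final identification is as graded vector spaces and not merely ungraded ones; this is where comparison with the Heegaard Floer computation of \cite{BGW}---which does track the grading---is most useful, since once the total dimension equals $|H_1(Y)|$ the grading is determined by \eqref{eq:1.1}. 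Modulo these points, the two cases together exhaust all Seifert fibered rational homology spheres, completing the proof.
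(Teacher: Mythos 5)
Your approach matches the paper's proof of Corollary~\ref{cor:1.2a}: for base orbifold $S^2$, exhibit $Y$ or $-Y$ as the boundary of a negative-definite one-bad-vertex (hence almost-rational) plumbing and invoke Corollary~\ref{cor:1.2}; for base $\RP^2$, transplant the Boyer--Gordon--Watson induction into the instanton setting using only the surgery exact triangle and the Euler characteristic formula \eqref{eq:1.1}. Two small corrections are worth making. First, your parenthetical claim that $\Is(-Y)\cong\Is(Y)^*$ has ``the same Euler characteristic up to sign, with the even/odd summands swapping'' is false and in fact contradicts the very formula you cite: since $|H_1(-Y)|=|H_1(Y)|$, Scaduto's formula \eqref{eq:1.1} gives $\chi(\Is(-Y))=\chi(\Is(Y))$ with the \emph{same} sign, so the even and odd summands do \emph{not} swap under orientation reversal. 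This is harmless for your argument---indeed, it is exactly what you need---but the reasoning as written is inconsistent. Second, the base case $k=1$ of the $\RP^2(\alpha_1,\dots,\alpha_k)$ induction is not the class of lens spaces; as in \cite[Proposition~18]{BGW}, it is either $\RP^3\#\RP^3$ or an elliptic manifold which also admits a Seifert fibration over $S^2(2,2,n)$, and the paper handles the latter by feeding the already-established $S^2$-base result into the $\RP^2$ induction rather than by an independent computation. Keeping this dependency explicit is important, since it is how the two halves of the proof interlock.
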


We can then apply this to prove an instanton version of the Boyer--Gordon--Watson L-space conjecture \cite{BGW} for Seifert fibered rational homology spheres:\begin{corollary}\label{cor:1.3}
Let $Y$ be a Seifert fibered rational homology sphere. Then $Y$ is an instanton L-space if and only if its fundamental group is not left-orderable.
\end{corollary}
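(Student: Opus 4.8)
The plan is to combine Corollary~\ref{cor:1.2a} with the known Heegaard Floer version of the L-space conjecture for Seifert fibered rational homology spheres, which is a theorem of Boyer--Gordon--Watson \cite{BGW} (building on work of Lisca--Stipsicz and others). First I would recall that, by Corollary~\ref{cor:1.2a}, for any Seifert fibered rational homology sphere $Y$ we have $\Is(Y) \cong \HFhat(Y;\C)$ as $\Z/2\Z$-graded complex vector spaces. In particular, $\dim_\C \Is(Y) = \dim_\C \HFhat(Y;\C)$. Since $\chi(\Is(Y)) = |H_1(Y)|$ by \eqref{eq:1.1} and likewise $\chi(\HFhat(Y;\C)) = |H_1(Y)|$, the two groups have the same rank if and only if one is ``thin'' exactly when the other is; that is, $Y$ is an instanton L-space (meaning $\dim_\C \Is(Y) = |H_1(Y)|$) if and only if $Y$ is a Heegaard Floer L-space (meaning $\dim_\C \HFhat(Y;\C) = |H_1(Y)|$).

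Next I would invoke the Boyer--Gordon--Watson theorem: a Seifert fibered rational homology sphere $Y$ is a Heegaard Floer L-space if and only if $\pi_1(Y)$ is not left-orderable. (This is precisely the content of \cite{BGW}; it is unconditional for Seifert fibered spaces, not contingent on the general L-space conjecture.) Stringing the two equivalences together yields: $Y$ is an instanton L-space $\iff$ $Y$ is a Heegaard Floer L-space $\iff$ $\pi_1(Y)$ is not left-orderable, which is exactly the claim.

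There is essentially no obstacle here beyond bookkeeping, since all the hard work is already packaged into Corollary~\ref{cor:1.2a} and the cited theorems; the one point requiring a little care is confirming that ``instanton L-space'' and ``Heegaard Floer L-space'' are defined by the same numerical condition $\dim = |H_1|$, so that the dimension-preserving isomorphism of Corollary~\ref{cor:1.2a} transports one notion to the other verbatim. The only genuinely nontrivial external input is the left-orderability side of \cite{BGW}, which we use as a black box.
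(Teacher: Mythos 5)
Your overall strategy matches the paper's: use the instanton-to-Heegaard-Floer isomorphism to translate the instanton L-space condition into a Heegaard Floer one, then cite the known orderability results for Seifert fibered spaces. However, there is a genuine gap in the coefficient bookkeeping. Corollary~\ref{cor:1.2a} gives you $\Is(Y) \cong \HFhat(Y;\C)$, so the equivalence it buys you is ``$Y$ is an instanton L-space $\iff$ $Y$ is a Heegaard Floer L-space \emph{over $\C$}.'' But the Boyer--Gordon--Watson result (and the underlying Lisca--Stipsicz and Boyer--Rolfsen--Wiest theorems on taut foliations and orderability) is phrased in terms of the standard Heegaard Floer L-space condition over $\Z/2\Z$ (or $\Z$). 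By the universal coefficient theorem, a $\Z/2\Z$-L-space is automatically a $\C$-L-space, so one direction of your chain works for free. The other direction --- that a $\C$-L-space is a $\Z/2\Z$-L-space --- can fail if $\HF^+(Y;\Z)$ has $2$-torsion, and you have not ruled that out.

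The paper closes exactly this gap: for Seifert fibered spaces with base $S^2$, the chain of equivalences inserts the step ``$\C$-L-space $\iff$ $\Z/2\Z$-L-space,'' justified by Remark~\ref{torsion}, which observes that for almost-rational plumbings $\HF^+(-Y_\Gamma)$ is torsion-free because the lattice cohomology model is a direct sum of $H^0$ groups. For base $\mathbb{RP}^2$, the paper sidesteps the issue entirely by verifying that both sides of the biconditional hold: $Y$ is shown to be an instanton L-space (and a Heegaard Floer L-space over all coefficients) in the proof of Corollary~\ref{cor:1.2a}, while $\pi_1(Y)$ is not left-orderable by Boyer--Rolfsen--Wiest. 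To repair your proposal, you should add the torsion-freeness step (citing the lattice cohomology computation) for the $S^2$-base case and either do the same or treat the $\mathbb{RP}^2$-base case directly.
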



An important advantage of instanton Floer homology over other Floer theories is that the relationship between Floer homology and the fundamental group is much more explicit in the instanton setting.  In \cite[Theorem 4.6]{BSstein}, for instance, the authors prove that if $Y$ is not an instanton L-space, and certain nondegeneracy conditions are satisfied, then there must exist an irreducible representation $\pi_1(Y) \rightarrow SU(2)$. These nondegeneracy conditions are always satisfied if, for example, the universal abelian cover of $Y$ is a rational homology sphere. We therefore immediately obtain:

\begin{corollary}\label{cor:1.4}
Suppose $\Gamma$ is almost-rational. Suppose that $Y_\Gamma$ is not a Heegaard Floer L-space and   the universal abelian cover of $Y_\Gamma$ is a rational homology sphere. Then there is an irreducible representation $\pi_1(Y_\Gamma) \rightarrow SU(2)$.
\end{corollary}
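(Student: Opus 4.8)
The plan is to deduce Corollary~\ref{cor:1.4} by feeding Corollary~\ref{cor:1.2} into the representation-theoretic machinery of \cite[Theorem 4.6]{BSstein}. First I would observe that since $Y_\Gamma$ is not a Heegaard Floer L-space, the group $\HFhat(Y_\Gamma;\C)$ has dimension strictly greater than $|H_1(Y_\Gamma)|$; by Corollary~\ref{cor:1.2} the same strict inequality holds for $\dim_\C \Is(Y_\Gamma)$, so combining with the Euler characteristic formula \eqref{eq:1.1} we conclude that $Y_\Gamma$ is not an instanton L-space either. (Here one should be a little careful: ``instanton L-space'' means $\dim \Is(Y) = |H_1(Y)|$, equivalently $\Is$ is supported in even grading, so the comparison is immediate once the Heegaard Floer dimension is known to be larger.)

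Next I would invoke \cite[Theorem 4.6]{BSstein}, which states that if $Y$ is not an instanton L-space and a certain nondegeneracy hypothesis on the $SU(2)$ character variety holds, then there is an irreducible homomorphism $\pi_1(Y)\to SU(2)$. The remaining task is to verify that the nondegeneracy hypothesis is automatically satisfied under our assumption that the universal abelian cover $\widetilde{Y_\Gamma}$ is a rational homology sphere. I would recall (or cite from \cite{BSstein}) the precise form of this hypothesis---it concerns the reducible locus of the $SU(2)$ representation variety of $Y_\Gamma$ being cut out nondegenerately, which is controlled by the first cohomology of $Y_\Gamma$ with coefficients twisted by the abelian characters---and then explain that $b_1(\widetilde{Y_\Gamma}) = 0$ forces all the relevant twisted cohomology groups to vanish, so every reducible is nondegenerate. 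This is exactly the implication already flagged in the paragraph preceding the corollary (``These nondegeneracy conditions are always satisfied if\dots the universal abelian cover of $Y$ is a rational homology sphere''), so at this point the argument is essentially a citation.

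Assembling the pieces: Corollary~\ref{cor:1.2} plus the failure of the Heegaard Floer L-space condition gives that $Y_\Gamma$ is not an instanton L-space; the rational-homology-sphere hypothesis on the universal abelian cover supplies the nondegeneracy input; and \cite[Theorem 4.6]{BSstein} then produces the desired irreducible $SU(2)$ representation. The main obstacle---such as it is---is not in any new geometry but in correctly matching the hypotheses of \cite[Theorem 4.6]{BSstein} with the situation at hand, i.e.\ confirming that ``not a Heegaard Floer L-space'' really does translate, via Corollary~\ref{cor:1.2} and \eqref{eq:1.1}, into ``not an instanton L-space,'' and that the nondegeneracy condition is implied (and not merely typically implied) by the universal abelian cover being a $\Q$HS${}^3$. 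Once those two bookkeeping points are pinned down, the proof is a short deduction with no further analysis required.
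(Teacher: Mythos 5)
Your proof is correct and follows essentially the same path as the paper's: deduce from Corollary~\ref{cor:1.2} that $Y_\Gamma$ is not an instanton L-space, note that the universal abelian cover being a rational homology sphere supplies the cyclical-finiteness (nondegeneracy) hypothesis, and invoke \cite[Theorem 4.6]{BSstein}. The only difference is that you spell out the bookkeeping (Euler characteristic, dimension comparison) more explicitly, whereas the paper treats these as immediate; both arguments are the same short citation chain.
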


In \cite[Theorem 1.3]{Pedersen}, Pedersen provides a combinatorial procedure for determining when the universal abelian cover of a graph manifold is a rational homology sphere. Thus, if $\Gamma$ is almost-rational, then a computation of $\Hl(\Gamma)$ (in conjunction with Pedersen's work) can be used in principle to deduce the existence of irreducible $SU(2)$-representations of $\pi_1(Y_\Gamma)$. We illustrate this principle with an example in Section~\ref{sec:8}.

As mentioned earlier, our proof of Theorem~\ref{thm:1.1} requires the following ``adjunction relations" for cobordism maps in framed instanton homology, which we establish in Section \ref{sec:4} and may be of independent interest:

\begin{theorem}\label{thm:1.5}
Let $(W,\nu)$ be a cobordism \[(W, \nu): (Y_1,\lambda_1)\to (Y_2,\lambda_2)\] with $b_1(W) = 0$, such that at least one of $Y_1$ or $Y_2$ is a rational homology sphere.  Let $\Sigma$ be an embedded 2-sphere in $W$ of (negative) self-intersection $-p$, and let \[s : H_2(W; \Z) \rightarrow \Z\] be a homomorphism. Then the cobordism map \[\Is(W, \nu; s):\Is(Y_1,\lambda_1) \to \Is(Y_2,\lambda_2)\] satisfies the following identities:
\begin{enumerate}
\item If $|\ev{s}{[\Sigma]}| > p$, then $\Is(W, \nu; s) = 0$.
\item If $\ev{s}{[\Sigma]} = \pm p$, then $\Is(W, \nu; s) = (-1)^{\phantom{.}\nu \cdot \Sigma\phantom{.}} \Is(W, \nu; s \pm 2[\Sigma]^*)$.
\end{enumerate}
Here, $\ev{s}{[\Sigma]}$ denotes the evaluation of $s$ on $[\Sigma]$, and $[\Sigma]^*$ is the Poincar{\'e} dual of $[\Sigma]$.
\end{theorem}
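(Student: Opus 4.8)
The plan is to localize the argument in a tubular neighborhood of $\Sigma$ and to reduce both identities to an explicit computation for a disk bundle. Let $N\subset W$ be a closed tubular neighborhood of $\Sigma$, so that $N$ is diffeomorphic to the disk bundle over $S^2$ with Euler number $-p$ and $L:=\partial N$ is a lens space; after an isotopy we arrange that $\nu\cap N$ is a union of $\nu\cdot\Sigma$ fibers of $N$. Then $W=W_0\cup_L N$, where $W_0:=\overline{W\setminus N}$ is a cobordism $(W_0,\nu_0)\colon(Y_1,\lambda_1)\sqcup(L,\emptyset)\to(Y_2,\lambda_2)$ with $b_1(W_0)=0$, and $N$ is viewed as a cobordism $(N,\nu_N)\colon\emptyset\to(L,\emptyset)$ carrying the $\nu\cdot\Sigma$ fibers. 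Using functoriality of framed instanton homology, the K\"unneth formula for disjoint unions over $\C$, and the decomposition theorem of Baldwin--Sivek \cite{BSLspace} (each of the pieces $W_0$ and $N$ having a rational homology sphere boundary component, namely $L$), one obtains
\[
\Is(W,\nu;s)=\Is(W_0,\nu_0;s|_{W_0})\circ\bigl(\mathrm{id}\otimes\Is(N,\nu_N;n)\bigr),
\]
where $n:=\ev{s}{[\Sigma]}$, the vector $\Is(N,\nu_N;n)\in\Is(L)$ is the corresponding $\Z$-graded piece of the relative invariant of the disk bundle $N$, and $\mathrm{id}\otimes\Is(N,\nu_N;n)$ denotes the map $v\mapsto v\otimes\Is(N,\nu_N;n)$. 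Since every class in the image of $H_2(W_0;\Z)\to H_2(W;\Z)$ can be represented disjointly from $\Sigma$, the homomorphisms $s$ and $s\pm2[\Sigma]^*$ restrict to the \emph{same} homomorphism on $H_2(W_0;\Z)$, so the $W_0$-factor is unchanged under $s\mapsto s\pm2[\Sigma]^*$, while $s\pm2[\Sigma]^*$ evaluates to $n\mp2p$ on $[\Sigma]$. Both parts of the theorem thus reduce to statements about the vectors $\Is(N,\nu_N;n)\in\Is(L)$.

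\emph{The disk-bundle invariant.} Write $N=B^4\cup h$ with $h$ a $(-p)$-framed $2$-handle attached along an unknot $U\subset S^3$, so that $\Sigma$ is the union of the core of $h$ with a Seifert disk for $U$ and $\nu_N$ meets the core in $\nu\cdot\Sigma$ points; then $\Is(N,\nu_N;n)$ is the image of a generator of $\Is(S^3)\cong\C$ under the $n$-graded piece of the $2$-handle cobordism map $\Is(S^3)\to\Is(L)$. The Baldwin--Sivek decomposition of this map, which endows the surgery exact triangle for framed instanton homology \cite{Scaduto} with a $\Z$-grading, shows that its $n$-graded piece vanishes whenever $|n|>p$; combined with the displayed factorization this proves part~(1). (This is the framed-instanton analogue of the adjunction-type vanishing used by Ozsv\'ath--Szab\'o \cite{OSplumbed}.)

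\emph{The reflection symmetry.} Part~(2) reduces, via the factorization, to the identity $\Is(N,\nu_N;p)=(-1)^{\nu\cdot\Sigma}\,\Is(N,\nu_N;-p)$ in $\Is(L)$: granting it, when $\ev{s}{[\Sigma]}=+p$ we get
\[
\begin{aligned}
\Is(W,\nu;s)&=(-1)^{\nu\cdot\Sigma}\,\Is(W_0,\nu_0;s|_{W_0})\circ\bigl(\mathrm{id}\otimes\Is(N,\nu_N;-p)\bigr)\\
&=(-1)^{\nu\cdot\Sigma}\,\Is(W,\nu;s+2[\Sigma]^*),
\end{aligned}
\]
using that $s+2[\Sigma]^*$ agrees with $s$ on $W_0$ and evaluates to $-p$ on $[\Sigma]$; the case $\ev{s}{[\Sigma]}=-p$ is symmetric. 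The identity $\Is(N,\nu_N;p)=(-1)^{\nu\cdot\Sigma}\,\Is(N,\nu_N;-p)$ is a local statement about the disk bundle, obtained by comparing the top and bottom $\Z$-graded pieces of the unknot $2$-handle cobordism map in the Baldwin--Sivek decomposition; the sign $(-1)^{\nu\cdot\Sigma}$ records the effect of the $\nu\cdot\Sigma$ intersections of the decoration with the handle core on the orientations of the relevant instanton moduli spaces. Equivalently, one can deduce it from the conjugation symmetry of framed instanton homology, which exchanges the $n$- and $(-n)$-graded pieces of the disk-bundle relative invariant.

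\emph{Main obstacle.} The weight of the argument falls on two points. First, one must verify that Baldwin--Sivek's decomposition theorem genuinely identifies $\Is(W,\nu;s)$ with the composite above and equips the unknot $2$-handle cobordism map with the claimed $\Z$-graded structure; this is where the hypotheses $b_1(W)=0$ and that at least one of $Y_1,Y_2$ be a rational homology sphere are used. Second, one must pin down the sign $(-1)^{\nu\cdot\Sigma}$ in the reflection symmetry, which requires tracking the decorating surface $\nu$ --- and not merely homology classes --- through the cobordism maps. Everything else is bookkeeping with the neighborhood decomposition and Mayer--Vietoris for the restrictions of $s$.
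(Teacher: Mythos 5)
Your structural move---localizing the problem to a tubular neighborhood $N\cong W_p$ of $\Sigma$ and reducing to an invariant of the disk bundle in $\Is(L(p,1))$---is the same reduction the paper makes, though the paper implements it slightly differently (decomposing $W$ as $W_1\cup W_2$ with $W_1:Y_1\to Y_1\#L(p,1)$ a split cobordism, so that the composition law of Theorem~\ref{thm:decomposition} applies directly without invoking a disjoint-union extension). The real issue is that you assert the two key local facts rather than prove them, and neither is a formal consequence of the Baldwin--Sivek decomposition theorem.

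For the vanishing in part (1): you write that the Baldwin--Sivek decomposition ``shows that the $n$-graded piece vanishes whenever $|n|>p$.'' It does not, at least not directly. The adjunction inequality in Theorem~\ref{thm:decomposition} applies only to embedded surfaces with \emph{positive} self-intersection and genus at least one; a $2$-sphere $\Sigma$ with $\Sigma\cdot\Sigma = -p<0$ satisfies neither hypothesis, so the inequality cannot be applied to it. This is precisely the difficulty the paper's Lemma~\ref{lem:4.1} is designed to overcome: one first composes with a further cobordism $W$ (the trace of $2$-surgery on $T_{2,5}$) which induces an injection on $\Is$ and contains a genus-$2$ class $y$ with $y^2=+2$, and then applies the adjunction inequality to the class $v+ny$ for $n\gg 0$, which does have positive self-intersection and positive genus. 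Without this (or some substitute) the adjunction bound simply does not reach the sphere $\Sigma$, and part (1) is unproven.

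For the reflection identity in part (2): you propose two justifications, neither of which holds up. The claim that the sign ``records the effect on orientations of moduli spaces'' is not an argument, and I know of no established ``conjugation symmetry'' for framed instanton homology that exchanges $n$ and $-n$ with a controlled sign; that is precisely the kind of structural fact about $\Is$ that has not been established and which the paper has to work around. The paper instead proves the relation $x_{p,-p}=(-1)^p\,x_{p,p}$ (Lemma~\ref{lem:4.3}) by induction on $p$ using the blow-up formula, the sign relation, and the composition law, with the base case $p=1$ coming from the blow-up formula and sign relation directly. The dependence on the decoration $\nu$ (the passage from $(-1)^p$ to the general $(-1)^{\nu\cdot\Sigma}$) is then handled in the proof of the theorem by a case analysis on $[\nu]\in H_2(W_p,\partial W_p;\Z/2\Z)$ using the sign relation; your appeal to ``tracking the decorating surface'' gestures at this but does not carry it out.

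Finally, a smaller point: your factorization of $\Is(W,\nu;s)$ requires a version of the composition law for cobordisms with disconnected ends (your $W_0$ has incoming end $Y_1\sqcup L$), which Theorem~\ref{thm:decomposition} as stated does not provide; the paper sidesteps this by taking $W_1$ to be the split cobordism $(Y_1\times I)\sqcup W_p$, connected-summed, for which the split-cobordism naturality of \cite{Scaduto} applies.
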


\subsection{Organization}
In Section~\ref{sec:2}, we review some formal properties of framed instanton  homology and establish some terminology related to plumbings. In Section~\ref{sec:3}, we define our version of lattice homology and the associated map to framed instanton  homology. We prove that this map is  well-defined in Section~\ref{sec:4}, where we also prove the adjunction relations  of Theorem~\ref{thm:1.5}. In Section~\ref{sec:5}, we review the construction of the surgery exact sequence in lattice homology. This is used to begin the proof of Theorem~\ref{thm:1.1} in Section~\ref{sec:6}. We complete the proof in Section~\ref{sec:7}, where we also establish Corollaries~\ref{cor:1.2}, \ref{cor:1.2a}, and \ref{cor:1.3}. In Section~\ref{sec:8}, we discuss some examples and applications. Finally, in Appendix~\ref{sec:appendix} we prove a technical result about the invariance of lattice homology under blowing down leaves with framing $-1$.

\subsection{Acknowledgements} Irving Dai was supported by NSF Grant DMS-1902746. John Baldwin was supported by NSF Grant DMS-1454865. We are grateful to Juanita Pinz\'on-Caicedo and Liam Watson for motivational conversations, and to the anonymous referee for helpful feedback.

\section{Background}\label{sec:2}

\subsection{Framed instanton homology}\label{sec:2.1} 
Let $Y$ be a closed, connected, oriented 3-manifold, and  $\lambda \subset Y$ an oriented multicurve. (That is, $\lambda$ is an oriented, embedded 1-submanifold of $Y$.) Framed instanton homology assigns to the pair $(Y, \lambda)$ a $\Z/2\Z$-graded, finite-dimensional complex vector space $\Is(Y, \lambda)$. The isomorphism class of this vector space depends only on the diffeomorphism type of $Y$ and the homology class of $\lambda$ in $H_1(Y; \Z/2\Z)$. In this subsection, we discuss the difficulties that arise when defining instanton Floer homology, and explain how this \emph{framed} construction avoids these. The reader should see \cite{Donaldsonbook, KMknothomology, Scaduto} for more details.

The instanton Floer homology of a 3-manifold is defined, roughly, as the homology of a Morse chain complex associated to a Chern-Simons functional. The latter is a non-linear functional  on an infinite-dimensional space of connections associated to a principal $SU(2)$- or $SO(3)$-bundle over the 3-manifold, modulo the action of a corresponding gauge group (see e.g.\ \cite{Donaldsonbook}). The critical points of this functional are  the flat connections up to gauge equivalence. The principal difficulty in defining instanton Floer homology is that flat connections can be reducible, with nontrivial stabilizers under the action of the gauge group (such stabilizers can include subgroups isomorphic to $U(1)$, or, in the $SO(3)$ case, the Klein four-group among others \cite[p.236]{FloerExact}). 
This issue can be avoided by working with bundles which satisfy a certain admissibility condition, as explained below.

Let $E$ be a $U(2)$-bundle over $Y$, and  $L$  a line bundle over $Y$ with a fixed isomorphism \[L\xrightarrow{\cong}\det(E).\] Denote by $\smash{\mathcal{A}_{U(2)}}$ the space of $U(2)$-connections on $E$, and by $\smash{\mathcal{A}_{SO(3)}}$ the space of $SO(3)$-connections on ad$(E)$. Let $\mathcal{A}_{U(1)}$ denote the space of $U(1)$-connections on $L$, and fix a reference connection $\alpha$ therein. Given the natural identification \[\smash{\mathcal{A}_{U(2)} = \mathcal{A}_{SO(3)} \times \mathcal{A}_{U(1)}},\] we may identify $\smash{\mathcal{A}_{SO(3)}}$ with the subset of connections in $\smash{\mathcal{A}_{U(2)}}$ with the fixed determinant connection $\alpha$, \begin{equation}\label{eqn:ident}\smash{\mathcal{A}_{SO(3)} \cong \mathcal{A}_{SO(3)} \times \{\alpha\} \subset \mathcal{A}_{U(2)}}.\end{equation} We say that a connection in this subset is \emph{projectively flat} if its $\mathcal{A}_{SO(3)}$ component is flat. Note that we can view the group of \emph{determinant-1 gauge transformations} of $E$, \[
\mathcal{G}^{(\det=1)}=\{g \in \Gamma(\text{Aut}(E)) \mid \det(g)=1\},
\] as acting on $\smash{\mathcal{A}_{SO(3)}}$ via the identification in \eqref{eqn:ident}.
There is a many-to-one map \[\mathcal{A}_{SO(3)}/\mathcal{G}^{(\det=1)} \to \mathcal{A}_{SO(3)}/\mathcal{G},\] where $\mathcal{G}$ denotes the usual group of $SO(3)$-gauge transformations. 

The advantage of this setup over that of standard $SO(3)$-gauge theory, where one studies the Chern-Simons functional on $\mathcal{A}_{SO(3)}/\mathcal{G}$, is that  reducible connections are easier to understand in the determinant-1 setting: a connection $\nabla\in \mathcal{A}_{SO(3)}$ has nontrivial stabilizer (i.e. larger than $\Z/2\Z$) in the determinant-1 gauge group if and only if $\nabla$ is compatible with a splitting of $E$ into line bundles. If $\nabla$ is moreover projectively flat, then Chern-Weil theory, applied to this splitting, implies that the restriction of $E$ to any surface $\Sigma \subset Y$ has even degree. This leads to the admissibilty condition below.

A pair $(Y,\lambda)$ as above is called \emph{admissible} if the multicurve $\lambda$ intersects some surface $\Sigma \subset Y$ in an odd number of points. To define the instanton Floer homology $I_*(Y,\lambda)$ of this pair, we then choose $E$ so that $c_1(E)$ is Poincar\'e dual to $\lambda$. From the discussion above, this ensures that $E$ admits no reducible, projectively flat connections. The construction of this Floer group can then proceed in the usual manner, via the Morse homology of the Chern-Simons functional on \[\mathcal{A}_{SO(3)}/\mathcal{G}^{(\det=1)}.\] The resulting complex vector space $I_*(Y,\lambda)$ has a relative $\Z/8\Z$-grading which reduces to a canonical absolute $\Z/2\Z$-grading. For further details, see e.g.\ \cite[Section 5.6]{Donaldsonbook}.

\begin{remark} \label{rmk:critCS}The critical points of the (unperturbed) Chern-Simons functional on $\mathcal{A}_{SO(3)}/\mathcal{G}^{(\det=1)}$ are the projectively flat connections, up to determinant-1 gauge. These are in one-to-one correspondence with representations 
\[\rho: \pi_1(Y-\lambda)\to SU(2)\]
such that $\rho(\mu)=-I$ for every meridian $\mu\subset Y-\lambda$, up to conjugation in $SU(2)$.

\end{remark}

The admissibility condition implies that $b_1(Y) > 0$. To define instanton Floer homology for an arbitrary pair $(Y,\lambda)$ of $3$-manifold and multicurve, we apply the \emph{framed} construction from \cite{KMknothomology}, as follows. First, we form the stabilized pair \[
(Y \# \phantom{.}T^3, \lambda \cup \gamma),
\]
where $\gamma$ denotes a fiber of the trivial $S^1$-bundle over $T^2$. Since this pair is admissible, we can define its instanton Floer homology \[I_*(Y \# \phantom{.}T^3, \lambda \cup \gamma)\] as described above. There is a degree-4 involution $\frac{1}{2}\mu(\textrm{pt})$ on this group, and the \emph{framed instanton homology} of $(Y,\lambda)$, denoted by \[\Is(Y,\lambda),\] is the complex vector space given by fixed point set of this involution; in particular, \[\dim \Is(Y,\lambda) = \frac{1}{2}\dim I_*(Y \# \phantom{.}T^3, \lambda \cup \gamma).\] This framed group is relatively $\Z/4\Z$-graded, and retains the absolute $\Z/2\Z$-grading from the Floer homology of $(Y \# \phantom{.}T^3, \lambda \cup \gamma)$.

\begin{remark}
It is not hard to see from Remark \ref{rmk:critCS} that the critical points of the (unperturbed) Chern-Simons functional for the pair $(Y \# \phantom{.}T^3, \lambda \cup \gamma)$ are in two-to-one correspondence with representations \[\rho:\pi_1(Y)\to SU(2),\] where we do \emph{not} mod out by conjugation. 
\end{remark}

\begin{remark}
The isomorphism class of $\Is(Y, \lambda)$ depends only on the isomorphism type of the $SO(3)$-bundle \[\textrm{ad}(E)\to Y \# \phantom{.}T^3,\] which is classified by its second Stiefel-Whitney class. Therefore, up to isomorphism, $\Is(Y, \lambda)$ depends only on the diffeomorphism type of $Y$ and the homology class of $\lambda$ in $H_1(Y; \Z/2\Z)$, as mentioned above. For the naturality required for cobordism maps, however, we will need to keep track the oriented multicurve $\lambda$ itself. This orientation can be viewed as specifying the $U(2)$-lift $E$ of the $SO(3)$-bundle ad$(E)$.
\end{remark}

\subsection{Cobordism maps}\label{sec:2.2}
Let $(Y_0, \lambda_0)$ and $(Y_1, \lambda_1)$ be two 3-manifold and multicurve pairs. Let $W$ be a cobordism from $Y_0$ to $Y_1$, and $\Sigma$ be a properly embedded surface in $W$ with $\partial \Sigma = - \lambda_0 \cup \lambda_1,$ so that $(W,\Sigma)$ is a cobordism of pairs, \[(W,\Sigma): (Y_0, \lambda_0)\to (Y_1, \lambda_1)\] (the surface $\Sigma$ will sometimes be referred to as the \emph{surface decoration} on $W$). As shown in \cite{KMkhovanov}, this cobordism $(W, \Sigma)$ induces a complex-linear map \[\Is(W,\Sigma): \Is(Y_0, \lambda_0)\to \Is(Y_1, \lambda_1).\] According to \cite[Proposition~7.1]{Scaduto}, this map has    mod 2 degree given by 
\begin{equation}\label{eqn:degree} -\frac{3}{2}(\chi(W)+\sigma(W)) + \frac{1}{2}(b_1(Y_1)-b_1(Y_0)) \bmod{2}. \end{equation}
In \cite[Theorem 1.16]{BSLspace}, Baldwin and Sivek showed that these maps satisfy a $\spinc$-like decomposition analogous to that of cobordism maps in Heegaard Floer homology \cite{OSsmooth4}. Specifically, they proved the following:
\begin{theorem}\cite[Theorem 1.16]{BSLspace}\label{thm:decomposition}
Let \[(W,\Sigma): (Y_0, \lambda_0)\to (Y_1, \lambda_1)\] be a cobordism with $b_1(W)=0$. Then $\Is(W,\Sigma)$ is a sum
\[ \Is(W,\Sigma) = \sum_{k: H_2(W; \Z)\to \Z} \Is(W,\Sigma; k) \]
of maps \[\Is(W,\Sigma; k): \Is(Y_0, \lambda_0)\to \Is(Y_1, \lambda_1)\] such that:
\begin{enumerate}
\item (finiteness) $\Is(W,\Sigma; k)=0$ for all but finitely many $k$;
\item (adjunction inequality) If $\Is(W,\Sigma; k)$ is non-zero, then $\ev{k}{x} \equiv x^2 \bmod 2$ for all $x \in H_2(W; \Z)$, and $k$ satisfies the adjunction inequality 
\[
|\ev{k}{[S]}| + [S] \cdot [S] \leq -\chi(S)
\]
for every smoothly embedded, connected surface $S \subset W$ with positive self-intersection and genus at least one;
\item (composition) Let \[(W',\Sigma'): (Y_1, \lambda_1)\to (Y_2, \lambda_2)\] be another cobordism such that $b_1(W')=0$, and let $k' : H_2(W'; \Z) \rightarrow \Z$. Then
\[ 
\Is(W',\Sigma'; k') \circ \Is(W,\Sigma; k) = \sum_{\substack{s: H_2(W \cup W'; \Z)\to \Z \text{ with} \\ s|_W=k \text{ and } s|_{W'}=k'}} \Is(W \cup W', \Sigma \cup \Sigma'; s) 
\]
where $W \cup W'$ denotes the cobordism obtained by gluing $W$ and $W'$ along their common boundary $Y_1$;
\item (blow-up formula) Let $\widetilde{W}=W\# \overline{\CPr}^2$ denote the blow-up of $W$ at a point away from $\Sigma$. Then
\[\Is(\widetilde{W},\Sigma; k+\ell E^*) =
\begin{cases}
\frac{1}{2} \Is(W,\Sigma; k) & \text{ if } \ell = \pm 1 \\
0 & \text{ otherwise } 
\end{cases}
\]
where $E^*$ denotes the Poincar\'e dual to the exceptional divisor;
\item (sign relation) For any $\alpha \in H_2(W; \Z)$, we have
\[
\Is(W, \Sigma + \alpha; k)=(-1)^{\frac{1}{2}(\ev{k}{\alpha} + \alpha \cdot \alpha) + \alpha \cdot [\Sigma]\phantom{.}} \Is(W, \Sigma; k).
\] 
\end{enumerate} 
\end{theorem}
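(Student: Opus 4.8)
\emph{Proof strategy.}
The plan is to follow the Heegaard Floer argument of \cite{OSplumbed}, replacing the dimension formula for cobordism maps (which has no immediate instanton analogue) by a reduction to a model computation near $\Sigma$. Let $N$ be a tubular neighborhood of $\Sigma$, a disk bundle over $S^2$ of Euler number $-p$, and set $W_0 = \overline{W \setminus N}$; then $\partial N$ is a lens space $L$ (in particular a rational homology sphere), $b_1(W_0) = 0$, and $W_0$ has a rational homology sphere among its boundary components, so Theorem~\ref{thm:decomposition} applies to $N$ and to $W_0$. Assume first that $Y_2$ is a rational homology sphere (the other case is symmetric), and write $W$ as the composite $(\mathrm{id}_{Y_2} \sqcup N) \circ W_0$, with $N$ viewed as a cobordism $L \to \emptyset$ and $W_0 \colon Y_1 \to Y_2 \sqcup L$. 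Since $H_2(L) = 0$, a Mayer--Vietoris argument shows that $H_2(W_0) \oplus H_2(N)$ has finite index in $H_2(W)$, so a homomorphism $H_2(W) \to \Z$ is determined by its restrictions to $W_0$ and to $N$; hence the gluing in the composition law (Theorem~\ref{thm:decomposition}(3)) is unique, and
\[
\Is(W,\nu;s) = \bigl(\mathrm{id} \otimes \Is(N,\nu_N;s_N)\bigr) \circ \Is(W_0,\nu_0;s|_{W_0}),
\]
where $\nu_N$ is a copy of $\nu$ meeting $N$ in $\nu \cdot \Sigma$ disk fibers and $s_N = s|_N$ records only $m := \ev{s}{[\Sigma]}$. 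As $\nu_N \cdot \Sigma = \nu \cdot \Sigma$, and the wall-crossing $s \mapsto s + 2[\Sigma]^*$ on $W$ corresponds under this unique gluing to $s_N \mapsto s_N + 2[\Sigma]^*$ on $N$, it suffices to prove (1) and (2) for $(N, \nu_N)$, after which the theorem follows by substituting back into the displayed identity.

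When $p = 1$, $N$ is $\overline{\CPr}^2$ with two balls removed and $\Sigma$ is the exceptional sphere, so the blow-up formula (Theorem~\ref{thm:decomposition}(4)) computes $\Is(N,\nu_N;s_N)$ outright: when $\nu_N \cdot \Sigma = 0$ it is $\tfrac12$ the identity if $|m| = 1$ and zero if $|m| > 1$, regardless of the sign of $m$. This gives (1), and gives (2) with trivial sign in this case. For general $\nu_N$ one reduces to the case $\nu_N \cdot \Sigma = 0$ using the sign relation (Theorem~\ref{thm:decomposition}(5)); a short computation shows that the sign-relation exponents attached to $s_N$ and to $s_N + 2[\Sigma]^*$ differ by $\nu_N \cdot \Sigma \bmod 2$, which is exactly the source of the factor $(-1)^{\nu \cdot \Sigma}$ in (2).

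The case $p \geq 2$ is the crux, and the main obstacle. In Heegaard Floer, the vanishing in (1) is immediate from the dimension formula: the cobordism map of the disk bundle shifts absolute gradings by roughly $-m^2/(4p)$, so for $|m| > p$ its image falls below the bottom degree of $\HFp(L)$. Since framed instanton homology has only a relative $\Z/4\Z$-grading, a genuinely new input is required. We argue by induction on $p$, the base case being $p = 1$. The disk bundles of Euler numbers $-p$ and $-(p-1)$ differ by a single $2$-handle, whose cobordism map sits in a surgery exact triangle (in the sense of \cite{Scaduto}) relating $\Is$ of the two lens spaces $\partial D_{-p}$, $\partial D_{-(p-1)}$ and of $S^1 \times S^2$; combining this triangle with the blow-up formula (to correct for the change in $b_2$) and with the description of $\Is$ of a lens space as an instanton L-space, one determines the $m$-graded pieces of $\Is(D_{-p})$ and reads off (1) and (2). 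The essential point---and the main technical novelty relative to \cite{OSplumbed}---is an instanton Fr{\o}yshov/grading-type bound forcing these lens-space cobordism maps to be as small as possible; the sign in (2) is tracked through the induction via the sign and blow-up relations exactly as when $p = 1$.
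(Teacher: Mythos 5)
There is a fundamental mismatch here: the statement you were asked to prove is Theorem~\ref{thm:decomposition} itself --- the Baldwin--Sivek decomposition of $\Is(W,\Sigma)$ into summands $\Is(W,\Sigma;k)$ satisfying finiteness, the adjunction inequality, the composition law, the blow-up formula, and the sign relation. In the paper this is an imported black box, quoted from \cite[Theorem 1.16]{BSLspace}; its proof requires actually constructing the decomposition (via the eigenspace structure of the $\mu$-operators on instanton moduli spaces) and establishing each of the five properties from that construction. Your proposal does none of this. Instead it \emph{assumes} the decomposition exists and explicitly invokes its composition law, blow-up formula, and sign relation (your citations of ``Theorem~\ref{thm:decomposition}(3)'', ``(4)'', ``(5)'') in order to derive the vanishing and wall-crossing identities for a sphere $\Sigma$ of square $-p$ --- i.e., you have written a proof sketch of Theorem~\ref{thm:1.5}, not of Theorem~\ref{thm:decomposition}. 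As a proof of the stated theorem the argument is circular, and the actual content of the theorem (existence of the $k$-decomposition and its five properties) is nowhere addressed.

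Even read charitably as an attempt at Theorem~\ref{thm:1.5}, the crux is missing. Your reduction to the disk-bundle neighborhood $N$ of $\Sigma$ via the composition law parallels what the paper does in Section~\ref{sec:4}, and the $p=1$ case via the blow-up formula and sign relation is fine. But for $p\geq 2$ you appeal to ``an instanton Fr{\o}yshov/grading-type bound forcing these lens-space cobordism maps to be as small as possible,'' which is precisely the point that has no known instanton analogue (there is no absolute $\Q$-grading or $\HF^+$-style bottom-degree argument available), and you neither state nor prove such a bound. The paper's route around this obstruction is different and concrete: Lemma~\ref{lem:4.1} proves the vanishing for $|\ev{k}{v}|>p$ by composing with the trace of $2$-surgery on $T_{2,5}$, whose induced map is injective, and applying the adjunction inequality of Theorem~\ref{thm:decomposition}(2) to an embedded surface representing $v+ny$ for $n\gg 0$; Lemma~\ref{lem:4.3} then pins down the extremal identities by an induction through the blow-up $W_p\cup W_{p,p-1}\cong W_{p-1}\#\overline{\CPr}^2$. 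Without a substitute for that injectivity-plus-adjunction step, your induction on $p$ does not close.
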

\noindent

\begin{remark}\label{rmk:dependence}
The maps $\Is(W,\Sigma; k)$ depend on $\Sigma$ only up to its class in $H_2(W,\partial W;\Z)$. Up to an overall sign (independent of $k$), these maps depend  on $\Sigma$ only up to its class in $H_2(W,\partial W;\Z/2\Z)$.
\end{remark}

\begin{remark}The framed instanton homology  of $Y$ also admits a decomposition along elements $k:H_2(Y; \Z) \rightarrow 2\Z$ \cite[Corollary 7.6]{KMknots}, and Theorem~\ref{thm:decomposition} is natural with respect to this decomposition \cite[Section 6]{BSLspace}. However, as we will only be concerned with rational homology spheres, this will not be important for our purposes.
\end{remark}


\subsection{The surgery exact triangle}\label{sec:2.3} The surgery exact triangle is
an important feature of many Floer homology theories. It was first established in  instanton Floer homology by Floer \cite{FloerExact}, though the formulation here comes from the work of Scaduto \cite{Scaduto}.

\begin{theorem}[\cite{FloerExact}, \cite{Scaduto}]\label{exact}
Let $Y$ be a closed, oriented 3-manifold, and $\lambda \subset Y$ an oriented multicurve. Fix any (oriented, framed) knot $K \subset Y - \lambda$. Then for any integer $n$, there is an exact triangle
\[
\cdots \rightarrow \Is(Y, \lambda) \xrightarrow{\Is(W_a, \Sigma_a)} \Is(Y_n(K), \lambda \cup \mu_K) \xrightarrow{\Is(W_b, \Sigma_b)} \Is(Y_{n+1}(K), \lambda) \rightarrow \cdots.
\]
Here, $\mu_K$ is a (positively-oriented) meridian of $K$ in $Y$.
\end{theorem}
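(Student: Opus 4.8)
The surgery exact triangle is due to Floer \cite{FloerExact}, and the precise formulation stated above is Scaduto's \cite{Scaduto}; my plan would be to follow that template, whose structure I outline here. First I would exhibit $Y$, $Y_n(K)$, and $Y_{n+1}(K)$ as the three Dehn fillings of the knot exterior $Y\setminus N(K)$ along the slopes $\mu_K$, $n\mu_K+\ell_K$, and $(n+1)\mu_K+\ell_K$ (with $\ell_K$ the chosen framing of $K$); since any two of these slopes have geometric intersection number one, the three manifolds form a \emph{surgery triad}. Correspondingly there are three $2$-handle cobordisms $W_a\colon Y\to Y_n(K)$, $W_b\colon Y_n(K)\to Y_{n+1}(K)$, and $W_c\colon Y_{n+1}(K)\to Y$ closing up the periodic sequence, each decorated by a surface $\Sigma_a$, $\Sigma_b$, $\Sigma_c$ built from the trace of $\lambda$ together with suitable pieces capping off the meridian $\mu_K$ where the boundary multicurves differ; the cobordism maps $\Is(W_a,\Sigma_a)$, $\Is(W_b,\Sigma_b)$, $\Is(W_c,\Sigma_c)$ are the arrows of the triangle.

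I would then separate the argument into a piece of homological algebra and its geometric verification. The algebraic input is the standard exactness criterion for such a triangle: if $C_1\xrightarrow{f_1}C_2\xrightarrow{f_2}C_3\xrightarrow{f_3}C_1$ is a cyclic sequence of chain maps of complexes over $\C$, equipped with homotopies $j_i$ satisfying $f_{i+1}f_i=\partial j_i+j_i\partial$ (indices mod $3$), and if the secondary composite $f_{i+2}j_i+j_{i+1}f_i\colon C_i\to C_i$ is a quasi-isomorphism for one $i$, then $\mathrm{Cone}(f_i)\simeq C_{i+2}$ and the induced periodic sequence $\cdots\to H(C_1)\to H(C_2)\to H(C_3)\to\cdots$ is exact. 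To invoke it I would need to (i) promote the three cobordism maps to chain maps $f_a,f_b,f_c$ between the instanton chain complexes of the decorated manifolds and construct the homotopies $j_a,j_b,j_c$, and (ii) identify the secondary composite.

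For (i): the composite cobordism $W_b\cup W_a$, and likewise its two cyclic analogues, contains an essential embedded $2$-sphere of self-intersection $-1$, obtained by gluing the cocore of the first $2$-handle to the core of the second along their common boundary. Using this sphere together with the composition law and blow-up formula of Theorem~\ref{thm:decomposition} and the standard neck-stretching description of composites of cobordism maps, one shows $f_b\circ f_a\simeq 0$, with the null-homotopy $j_a$ assembled from the count of instantons over the one-parameter family of metrics that degenerates along that sphere. For (ii): the secondary composite $C_i\to C_i$ agrees with the identity away from a neighborhood of $K$, so its computation reduces to a local model — the three triad fillings of the solid torus $S^1\times D^2\subset Y$, namely $S^3$, $S^3$, and $S^1\times S^2$, with the corresponding model cobordisms — in which the relevant instanton moduli spaces, with the prescribed holonomy along $\lambda$ and along meridians, are cut out transversely and counted directly to give a quasi-isomorphism up to sign. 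Throughout I would keep careful track of the surface decorations, as the meridian $\mu_K$ enters the decoration of $Y_n(K)$ and hence the sign bookkeeping through the sign relation of Theorem~\ref{thm:decomposition}.

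I expect the main obstacle to be this gauge-theoretic core: establishing the chain-level composition and gluing formulas with the right signs and decorations, extracting the null-homotopies from one-parameter families of metrics, and completing the local model computation that forces the secondary composite to be a quasi-isomorphism — equivalently, ruling out the degenerate case in which all three maps of the triangle vanish. These analytic inputs are precisely what is carried out in \cite{FloerExact} and \cite{Scaduto}, to which I would defer for the details.
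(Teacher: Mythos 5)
The paper does not prove Theorem~\ref{exact}; it states it with citations to Floer~\cite{FloerExact} and Scaduto~\cite{Scaduto}, and the text that follows merely spells out the surface decorations $\Sigma_a$, $\Sigma_b$ needed for later use. Your sketch is a faithful summary of the proof that lives in those references (surgery triad, triangle-detection lemma, null-homotopies from a one-parameter family of metrics degenerating along the embedded $(-1)$-sphere in each composite cobordism, local-model computation of the secondary composite), and deferring the gauge-theoretic analysis to~\cite{FloerExact} and~\cite{Scaduto} is exactly what the paper itself does, so there is nothing in the paper to compare against beyond the attribution.

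One small inaccuracy worth fixing: you cannot invoke the composition law and blow-up formula of Theorem~\ref{thm:decomposition} to produce the chain homotopy $j_a$. That theorem is a statement about the induced maps on homology, not chains, so it cannot supply a null-homotopy of $f_b\circ f_a$; moreover it is logically later in the literature than the surgery triangle and plays no role in Floer's or Scaduto's argument. The correct mechanism is the one you already name in the same sentence --- the chain-level gluing/neck-stretching analysis over a one-parameter family of metrics on $W_b\cup W_a$ degenerating along the $(-1)$-sphere --- which is also what underlies the homology-level blow-up formula; the appeal to Theorem~\ref{thm:decomposition} should simply be deleted.
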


\begin{remark}
A  sequence of manifolds of the form $Y, Y_n(K), Y_{n+1}(K)$ is called a \emph{surgery triad}.
\end{remark}

We now describe the surface decorations $\Sigma_a, \Sigma_b$ appearing in the cobordism maps above. We begin with some terminology. 

Suppose $\lambda$ is an oriented multicurve in $Y$, and let $K \subset Y-\lambda$ be an oriented knot. Denote by $W_n(K)$ the trace of $n$-framed surgery along $K$; i.e., $W_n(K)$ is obtained from $I\times Y$ by attaching an $n$-framed 2-handle along $\{1\}\times K\subset \{1\}\times Y$, where $I = [0,1]$. Let 
\[
C_{\lambda} = I \times \lambda  \subset I \times Y\subset W_n(K)
\]
denote the \emph{cylinder over $\lambda$}, and let $\mu_K$ be a positively-oriented meridian of $\{1\}\times K$ in $\{1\}\times Y\subset W_n(K)$. Let $D_{\mu_K}$ denote the disk formed by pushing into the interior of $I \times Y$ a meridional disk in $\{1\}\times Y$ bounded by $\mu_K$; see Figure \ref{fig:standard}.

\begin{definition}
The \textit{standard decoration} on $W_n(K)$ is the union $C_{\lambda} \cup D_{\mu_K}$. This decoration  has incoming boundary $\lambda =\{0\}\times \lambda \subset Y$ and outgoing boundary $\lambda \cup \mu_K = (\{1\}\times \lambda) \cup \mu_K\subset Y_n(K)$.
 \end{definition}

\begin{figure}[t]
\center
\includegraphics[scale=0.8]{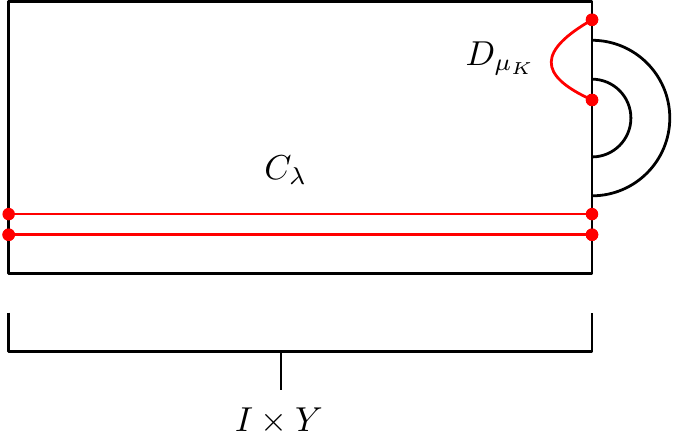}
\caption{Schematic depiction of the standard two-handle decoration.}\label{fig:standard}
\end{figure}

\begin{remark}[Naturality]\label{rem:tautological}
Suppose  $\lambda$ and $\lambda'$ are two disjoint multicurves in $Y$ such that $\lambda'$ is nullhomologous in $Y-\lambda$. Then the groups $\Is(Y, \lambda)$ and $\Is(Y, \lambda \cup \lambda')$ may be identified via the following \textit{tautological cobordism map}:
\[
\Is(I \times Y, C_{\lambda} \cup S_{\lambda'}) : \Is(Y, \lambda) \rightarrow \Is(Y, \lambda \cup \lambda').
\]
Here, $S_{\lambda'}$ denotes any surface obtained by pushing into the interior of $I \times Y$ a Seifert surface for $\{1\}\times\lambda'$ lying in the complement of $\lambda$. We likewise have the reverse of the tautological cobordism map, associated to the cobordism we get by turning $(I \times Y, C_{\lambda} \cup S_{\lambda'})$ upside-down. This map is an isomorphism, since the composition of the tautological cobordism with its reverse is just $I \times Y$ equipped with a decoration differing from $I \times \lambda$ by a closed nullhomologous surface, and the map associated to a cobordism $(W,\nu)$ depends on $\nu$ only up to its homology class in $H_2(W,\partial W)$, per Remark \ref{rmk:dependence}. The tautological cobordism map is similarly independent of the Seifert surface $S_{\lambda'}$, since this surface can be pushed into the boundary, and thus represents the trivial class in the relative second homology of the cobordism. 

Note that a cobordism $(W, \nu)$ with incoming end $(Y, \lambda \cup \lambda')$ naturally induces a cobordism with incoming end $(Y, \lambda)$ by precomposing with the tautological cobordism $(Y, \lambda) \to (Y, \lambda \cup \lambda')$. 
We will therefore identify the cobordism map induced by $(W, \nu)$ with the cobordism map induced by $(W, C_{\lambda} \cup S_{\lambda'} \cup \nu)$. Similar remarks apply to postcomposing with the reverse of the tautological cobordism. 
\end{remark}

We may finally explain the surface decorations $\Sigma_a, \Sigma_b$ appearing in Theorem \ref{exact}: 
\begin{enumerate}
\item View $W_a$ as the trace $W_n(K)$ of the $n$-framed surgery along $K$. Then $\Sigma_a$ is the standard 2-handle decoration associated to $W_a$.
\item View $W_b$ as obtained via $(-1)$-framed handle attachment along $J$, where $J$ is the image in $Y_n(K)$ of a \textit{negatively}-oriented meridian of $K$. Then $\Sigma_b$ is the standard 2-handle decoration associated to $W_b$. Note that this yields a map
\[
\Is(W_b, \Sigma_b) : \Is(Y_n(K), \lambda \cup \mu_K) \rightarrow \Is(Y_{n+1}(K), \lambda \cup \mu_K \cup \mu_J),
\]
where $\mu_K$ and $\mu_J$ denote meridians of $K$ and $J$, respectively. Since $\mu_K \cup \mu_J$ is nullhomologous in (the complement of $\lambda$ in) $Y_{n+1}(K)$, we may use the  isomorphism of Remark \ref{rem:tautological} to identify
\[
\Is(Y_{n+1}(K), \lambda \cup \mu_K \cup \mu_J) \cong \Is(Y_{n+1}(K), \lambda)
\]
and obtain a cobordism map 
\[
\Is(W_b, \Sigma_b) : \Is(Y_n(K), \lambda \cup \mu_K) \rightarrow \Is(Y_{n+1}(K), \lambda)
\]
by postcomposing  with the reverse of the tautological cobordism map; see Figure~\ref{fig:exact}. To explicitly describe $\Sigma_b$ with respect to this identification,  view the core of the attaching 2-handle as having boundary $- \mu_K$. Then $\Sigma_b$ is just the surface obtained from the cylinder $C_{\lambda \cup \mu_K}$ by capping off $\mu_K$ with the core of the  2-handle.
\end{enumerate}

\begin{remark} $W_a \cup W_b$ is diffeomorphic to the blow-up of $(n+1)$-framed handle attachment along $K$. Under this identification, $\Sigma_a \cup \Sigma_b$ is the union of the standard 2-handle decoration for  the $(n+1)$-framed handle attachment with a single copy of the exceptional divisor.
\end{remark}

\begin{figure}[t]
\center
\includegraphics[scale=0.8]{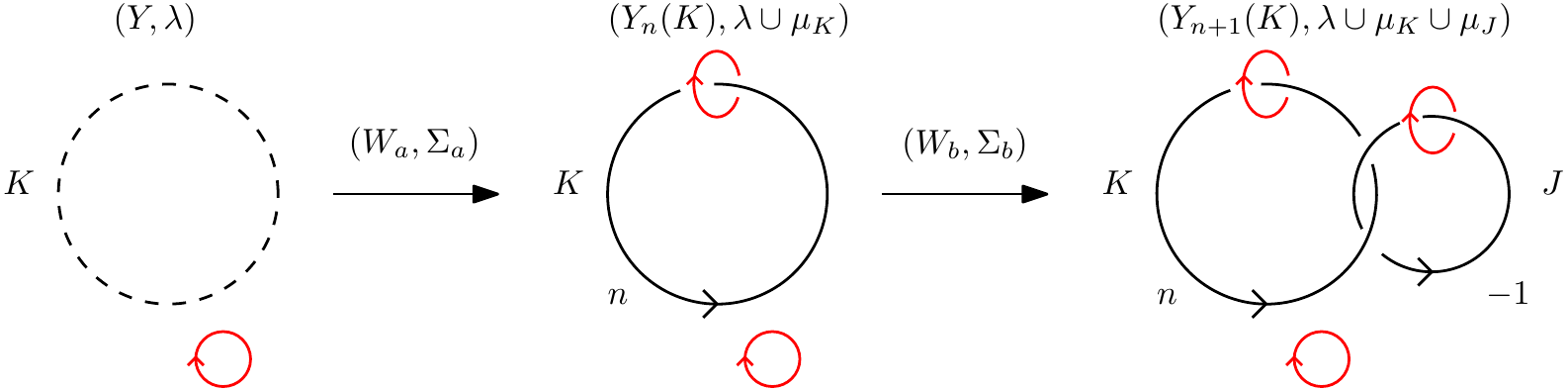}
\caption{Multicurve and surface decorations in the surgery exact triangle. Note that $J$ is oriented so that it has linking number $-1$ with $K$. Keeping careful track of the orientations, we see that $\mu_K \cup \mu_J$ is nullhomologous in $Y_{n+1}(K)-\lambda$.}\label{fig:exact}
\end{figure}

\subsection{Plumbings}\label{sec:2.4}
Let $\Gamma$ be a \emph{plumbing forest}. Recall that this is an undirected graph consisting of a disjoint union of trees, equipped with a function \[m: \Gamma \to \Z\] assigning an integer (framing) to each vertex of $\Gamma$. Associated to $\Gamma$, we have:
\begin{enumerate}
\item The plumbed 4-manifold $W_\Gamma$. This  is the simply connected 4-manifold  formed by plumbing disk bundles over the sphere as prescribed by $\Gamma$. The second homology $H_2(W_\Gamma)$ is an integer lattice  with a preferred basis; the preferred basis vectors are represented by embedded 2-spheres (the base spheres of these disk bundles) in one-to-one correspondence with the vertices of $\Gamma$. We denote the boundary of this manifold by $Y_\Gamma = \partial W_\Gamma$.  We will often remove a ball from the interior of $W_\Gamma$, and view it as a cobordism from $S^3$ to $Y_\Gamma$.
\item The intersection pairing $(-, -)_{\Gamma}$ on $H_2(W_\Gamma)\cong \Z^{|\Gamma|}$. This is given in the preferred basis of $H_2(W_\Gamma)$ by: 
\[
(v, w)_{\Gamma} = 
\begin{cases}
m(v) & \text{if } v = w, \\
-1 & \text{if $v$ and $w$ are adjacent in $\Gamma$}, \\
0 & \text{otherwise}.
\end{cases}
\]
Note that our convention in the case where $v$ and $w$ are adjacent differs from the standard one by a sign. We can think of this convention as being part of the data of $\Gamma$, in which edges of $\Gamma$ are labeled by $-1$ rather than the more standard $+1$. We will have occasion to consider the standard convention in Section \ref{sec:7.1}.
We refer to $\Gamma$ as \textit{negative-definite} if the pairing $(-, -)_\Gamma$ is negative-definite. Recall that if $v_1,\dots,v_n$ are the vertices of $\Gamma$, then the matrix \[A_\Gamma=((v_i,v_j)_\Gamma)\]  gives a presentation for $H_1(Y_\Gamma)$. In particular, if $\det A_\Gamma\neq 0$, as is the case when $\Gamma$ is negative-definite, then $Y_\Gamma$ is a rational homology sphere.
\item The link surgery diagram $\mathcal{L}_\Gamma$. This is  the framed link formed from oriented, framed unknots according to $\Gamma$, with linking matrix $A_\Gamma$. Attaching 2-handles to $B^4$ along the components of this link yields $W_\Gamma$. For any vertex $v \in \Gamma$, we use $\mu_v$ to denote the image in $Y_\Gamma$ of the positively-oriented meridian of the corresponding unknot.
\end{enumerate}
We will tacitly identify a vertex $v \in \Gamma$ with either:  the corresponding basis vector in $H_2(W_\Gamma)$; the corresponding embedded 2-sphere in $W_\Gamma$; or the corresponding unknot in $\mathcal{L}_\Gamma$, depending on the context. For further generalities on plumbing graphs, see \cite{Neumann}.

\begin{remark}\label{rem:signconventions}
Our non-standard convention for the linking number of adjacent vertices in $\Gamma$ will be helpful for making the orientations of meridional curves work out nicely, as in Figure~\ref{fig:exact}. While not strictly necessary, we adopt this in order to reduce bookkeeping throughout the paper. 
\end{remark}

Denote by $\Char(\Gamma)$ the set of characteristic vectors associated to the intersection form of $W_\Gamma$. Recall that these are the elements of \[H^2(W_\Gamma)\cong \text{Hom}_\Z(H_2(W_\Gamma), \Z)\] (we will  use  these two groups interchangeably) represented by maps \[k: H_2(W_\Gamma) \rightarrow \Z\] such that 
\[
\ev{k}{x} \equiv x^2 \text{ mod } 2
\]
for all $x \in H_2(W_\Gamma)$. The lattice $H_2(W_\Gamma)$ acts on the set $\Char(\Gamma)$, with the action of $x \in H_2(W_\Gamma)$ sending $k$ to $k + 2x^*$. Here and elsewhere, $x^*$ denotes the Poincar\'e dual of $x$; this is the element of $\text{Hom}_\Z(H_2(W_\Gamma), \Z)$ characterized by the equality \[\ev{x^*}{y} = (x, y)_\Gamma\] for all $y \in H_2(W_\Gamma)$. We denote the orbit of $k$ under this action by $[k]$. 

\begin{remark}
\label{rmk:spinc}
 The elements of $\Char(\Gamma)$ are in one-to-one correspondence with $\spinc$ structures on $W_\Gamma$ via the map sending a $\spinc$ structure to its first Chern class. The  orbits of the action of $H_2(W_\Gamma)$  on $\Char(\Gamma)$ are  similarly in one-to-one correspondence with $\spinc$ structures on the boundary $Y_\Gamma$.
\end{remark}

In what follows, we will be interested in some special classes of plumbings. 
\begin{definition}\cite[Definition 1.1]{OSplumbed} A vertex $v$ of a plumbing forest $\Gamma$ is called \emph{bad} if \begin{equation}\label{bad}
{-}m(v) < d(v),
\end{equation}
where $d(v)$ is the degree (i.e.\ valency) of $v$. We call  $\Gamma$  an \emph{$n$-bad-vertex plumbing} if it has exactly $n$ bad vertices.
\end{definition}

Boundaries of negative-definite plumbings with at most one bad vertex were considered in the Heegaard Floer context by Oszv\'ath and Szab\'o in \cite{OSplumbed}. Note that if $Y$ is a Seifert fibered rational homology sphere with base orbifold $S^2$, then either $Y$ or $-Y$ bounds a negative-definite $1$-bad-vertex plumbing, given by the usual star-shaped plumbing graph. 

Given a plumbing forest $\Gamma$, we define the \textit{canonical class} \[K\in H^2(W_\Gamma)\cong \Hom_\Z(H_2(W_\Gamma),\Z)\] to be the class satisfying  \[\ev{K}{v} = - m(v) - 2\] for all vertices $v$. Note that if we view $W_\Gamma$ as the resolution of a normal surface singularity then $K$ is the first Chern class of the canonical bundle of $W_\Gamma$. The following definition is inspired by the algebraic geometry of such singularities, and first appeared in \cite{NemethiOS}:

\begin{definition}[{\cite[Definition 8.1]{NemethiOS}}] A negative-definite plumbing forest $\Gamma$ is  \textit{rational} if
\begin{equation*}
-(\ev{K}{x} + x^2)/2 \geq 1
\end{equation*}
 for all $x > 0$ in $H_2(W_\Gamma)$. Here, $x > 0$ means that $x\neq 0$ and the coefficients of $x$ are  nonnegative with respect to the preferred basis of $H_2(W_\Gamma)$. If $\Gamma$ is not rational, but there exists a vertex $v$ of $\Gamma$ such that decreasing the framing of $v$ yields a rational plumbing, then $\Gamma$ is said to be  \textit{almost-rational}.
\end{definition}

Rational plumbings arise naturally in our context via a result of N\'emethi, which characterises rational plumbings as precisely those with minimal lattice homology (see \cite[Section 6.2]{NemethiOS}). Every negative-definite $0$-bad-vertex plumbing is rational, while every negative-definite $1$-bad-vertex plumbing is almost-rational. The converse  is not true: the $E_8$ plumbing is rational, but  has one bad vertex.


\section{Lattice homology}\label{sec:3}

\subsection{Construction}\label{sec:3.1}
In this section, we define the version of lattice homology used in this paper. Our presentation differs slightly from those in \cite{OSplumbed} and \cite{Nemethi} in order to accomodate the multicurve and surface decorations required in the instanton setting. We explain how our version of lattice homology is related to the version used in  Heegaard Floer homology in Section \ref{sec:7.1}.

\begin{definition}[Lattice homology]\label{def:3.1}
Let $\Gamma$ be a negative-definite plumbing forest. Let $V_\Gamma$ be the  vector space consisting of all  finite linear combinations of the form $\sum_i z_i \otimes k_i$, with $z_i\in \C$ and $k_i\in \Char(\Gamma)$.
Define the \textit{lattice homology} $\Hl(\Gamma)$ of $\Gamma$ to be the quotient $\Hl(\Gamma) = V_\Gamma/{\sim}$, where $\sim$ denotes the equivalence relation generated by the following elementary relations:
\begin{enumerate}[label=(\Roman*)]
\item If $k\in \Char(\Gamma)$ is such that $|\ev{k}{v}| > - v^2$ for some vertex $v\in \Gamma$, set $1 \otimes k \sim 0$.
\item If $k\in \Char(\Gamma)$ is such that $\ev{k}{v} = \pm v^2$ for some vertex $v\in \Gamma$, set 
\[
1 \otimes k \sim (-1)^{v^2} \otimes (k \mp 2v^*).
\] 
Here,  $v^*$ refers to  the Poincar\' e dual of the homology class represented by $v$ in $W_\Gamma$. 
\end{enumerate}
\end{definition}

\begin{remark} Note that $\Hl(\Gamma)$ is a finite-dimensional quotient of an infinite-dimensional complex vector space. Indeed, if $1\otimes k$ is non-zero in $\Hl(\Gamma)$, then necessarily \[v^2 \leq \ev{k}{v} \leq -v^2\] for all $v\in \Gamma$, and there are finitely many such elements $k$. A characteristic element $k$ for which $\ev{k}{v} = \pm v^2$ for some vertex $v$ is called \textit{extremal}. 
\end{remark}

We will sometimes abuse notation and write $k$ instead of  $1 \otimes k$. Note that a relation of Type II relates one extremal characteristic element to another extremal element, but the latter may be zero due to a relation of Type I applied to a vertex adjacent to $v$; see Example~\ref{ex:3.4}.

\begin{example}\label{ex:3.3}
Let $\Gamma$ be the graph consisting of a single vertex $v$ with  framing $-p<0$, so that $Y_{\Gamma} = L(p, 1)$ and \[W_{\Gamma}:S^3 \to L(p,1)\] is the cobordism obtained by removing a ball from the disk bundle over the sphere with Euler number $-p$. We will refer to such a $W_{\Gamma}$ as a \textit{1-vertex cobordism}, and denote it by $W_p$. Let $k_i$ be the element of $\Char(\Gamma)$ which evaluates to $i$ on $v$; in particular,  $i \equiv p$ mod $2$. Then
\[
\Hl(\Gamma) = \text{Span}_{\C} \{k_{-p}, k_{-p + 2}, \cdots, k_{p - 2}, k_p\}.
\]
These generators are linearly independent, with the exception of the  relation $k_{-p} = (-1)^{p} k_{p}$. Hence $\Hl(\Gamma) \cong \C^p$.
\end{example}

\begin{example}\label{ex:3.4}
Let $\Gamma$ be the standard negative-definite $E_8$-plumbing, so that $Y_{\Gamma} = \Sigma(2, 3, 5)$. In Figure~\ref{fig:3.1}, we have displayed two characteristic elements on $\Gamma$. These are equivalent to each other via a relation of Type II applied to the central vertex; the characteristic element on the right is then equivalent to zero by a relation of Type I. 
It can be shown  in this example that the \textit{only} non-zero element of $\Hl(\Gamma)$ is the equivalence class of the characteristic element which takes the value zero on each vertex; see for example \cite[Section 3.2]{OSplumbed}. Hence $\Hl(\Gamma) \cong \C$.\end{example}

\begin{figure}[t]
\center
\includegraphics[scale=0.8]{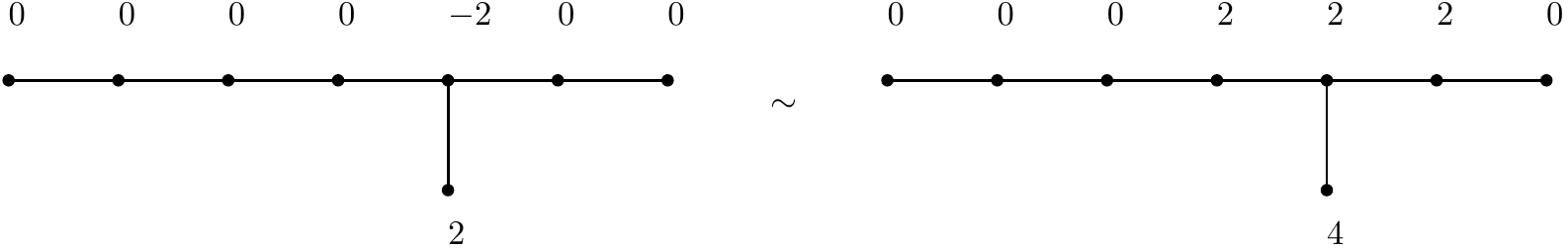}
\caption{Each characteristic element is described by labeling its value on the vertices of $\Gamma$; for example, the left-hand characteristic element is zero on all but two vertices of $\Gamma$. Note that adjacent vertices have pairing $-1$, rather than $+1$.}\label{fig:3.1}
\end{figure}

\subsection{The map from lattice homology to framed instanton homology}\label{sec:3.2}
We now define a map from the lattice homology of a negative-definite plumbing forest $\Gamma$ to the framed instanton homology of the associated 3-manifold $Y_{\Gamma}$. 

\begin{definition}[Standard multicurve]\label{def:3.5}
Let $\Gamma$ be a negative-definite plumbing forest. We define the \textit{standard multicurve} $\lambda_{\Gamma}$ in $Y_{\Gamma}$ by
\[
\lambda_\Gamma = \sum_{\text{vertices } v} |m(v)| \cdot \mu_v
\]
where $\mu_v$ denotes the (positively-oriented) meridian of $v$ in the surgery diagram $\mathcal{L}_\Gamma$, and $|m(v)|\cdot\mu_v$ is to be interpreted as $|m(v)|$ parallel copies of $\mu_v$, as shown  in Figure~\ref{fig:3.2}.
\end{definition}

\begin{lemma}\label{lem:3.6}
We have that $[\lambda_{\Gamma}] = 0$  as a class in $H_1(Y_{\Gamma}, \Z/2\Z)$. In particular, there is an isomorphism
$\Is(Y_{\Gamma},\lambda_{\Gamma})\cong \Is(Y_\Gamma)$.
\end{lemma}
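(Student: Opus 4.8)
The plan is to establish the homological vanishing $[\lambda_\Gamma]=0$ in $H_1(Y_\Gamma;\Z/2\Z)$; the isomorphism $\Is(Y_\Gamma,\lambda_\Gamma)\cong\Is(Y_\Gamma)$ then follows at once, since (as recalled in Section~\ref{sec:2.1}) the isomorphism type of $\Is(Y,\lambda)$ depends only on the diffeomorphism type of $Y$ and the class of $\lambda$ in $H_1(Y;\Z/2\Z)$, and $[\lambda_\Gamma]=0=[\varnothing]$.

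For the vanishing, I would recall from Section~\ref{sec:2.4} that $A_\Gamma$ presents $H_1(Y_\Gamma;\Z)$: writing $n=|\Gamma|$, surgery on $\mathcal{L}_\Gamma$ gives $H_1(Y_\Gamma;\Z)\cong\Z^n/A_\Gamma\Z^n$, where the standard basis of $\Z^n$ corresponds to the meridians $\mu_v$. Reducing coefficients mod $2$ (right-exactness of tensoring with $\Z/2\Z$) yields $H_1(Y_\Gamma;\Z/2\Z)\cong(\Z/2\Z)^n/\bar A_\Gamma(\Z/2\Z)^n$, where $\bar A_\Gamma$ is the mod $2$ reduction of $A_\Gamma$ and the $\mu_v$ still give the standard basis. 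Since $|m(v)|\equiv m(v)\pmod 2$ for every vertex, the class of $\lambda_\Gamma=\sum_v|m(v)|\mu_v$ in $H_1(Y_\Gamma;\Z/2\Z)$ is represented by the vector $(m(v)\bmod 2)_v=(\bar A_{\Gamma,vv})_v$, i.e.\ by the diagonal of $\bar A_\Gamma$. So the lemma reduces to the purely linear-algebraic claim that the diagonal of a symmetric matrix over $\Z/2\Z$ always lies in its column space.

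This claim I would prove via the Fredholm alternative: over any field, $\Imm(\bar A_\Gamma)=(\ker\bar A_\Gamma^{\mathsf T})^\perp=(\ker\bar A_\Gamma)^\perp$ for the standard pairing (using symmetry of $\bar A_\Gamma$ in the last step), so it is enough to check that $\sum_v\bar A_{\Gamma,vv}\,y_v=0$ for every $y\in\ker\bar A_\Gamma$. But over $\Z/2\Z$ we have $y_v^2=y_v$, and the off-diagonal terms of $y^{\mathsf T}\bar A_\Gamma\,y$ cancel in pairs by symmetry, so
\[
\sum_v \bar A_{\Gamma,vv}\,y_v=\sum_v \bar A_{\Gamma,vv}\,y_v^2=y^{\mathsf T}\bar A_\Gamma\,y=y^{\mathsf T}(\bar A_\Gamma y)=0.
\]
Hence $[\lambda_\Gamma]=0$ in $H_1(Y_\Gamma;\Z/2\Z)$, which completes the argument.

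I do not expect a serious obstacle. The only points that require care are the sign conventions in the surgery presentation of $H_1(Y_\Gamma)$ — recall that edges of $\Gamma$ are labelled $-1$ rather than $+1$, though since $-1\equiv+1\pmod 2$ the matrix $\bar A_\Gamma$ is the same either way — and the observation that we genuinely need only the mod $2$ statement: $\lambda_\Gamma$ need not be nullhomologous over $\Z$, so the argument cannot be routed through the tautological cobordism map of Remark~\ref{rem:tautological}.
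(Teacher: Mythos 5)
Your proof is correct. Both you and the paper reduce the lemma to the same algebraic claim: the diagonal of $\bar A_\Gamma$ lies in the column space of $\bar A_\Gamma$ over $\Z/2\Z$, so that the corresponding relation vector kills $\lambda_\Gamma = \sum_v |m(v)|\mu_v$ in $H_1(Y_\Gamma;\Z/2\Z)$. Where the two differ is in how that claim is established. The paper posits an integral vector $w = \sum c_i v_i \in H_2(W_\Gamma)$ with $(w,x)\equiv x^2 \bmod 2$ for all $x$, observes that the boundary $\partial\bigl(\sum c_i D_i\bigr)$ is then $\equiv\lambda_\Gamma \bmod 2$, and justifies the existence of $w$ by pointing to the spin$^c$ parametrization of characteristic elements. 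That reference is to characteristic \emph{covectors} in $H^2(W_\Gamma)$, however, and for a non-unimodular negative-definite form the map $H_2 \to H^2$ is not surjective, so the existence of such a $w$ (equivalently, of a preimage of the quadratic functional mod $2$ under $\bar A_\Gamma$) is exactly what needs proving — the paper treats it somewhat informally. You supply that proof directly: by the Fredholm alternative over $\Z/2\Z$ and the symmetry of $\bar A_\Gamma$, the column space is the orthogonal complement of $\ker\bar A_\Gamma$, and for $y\in\ker\bar A_\Gamma$ the identity $\sum_v \bar A_{vv}\,y_v = y^{\mathsf T}\bar A_\Gamma y = 0$ (using $y_v^2=y_v$ and pairwise cancellation of off-diagonal terms) shows the diagonal is orthogonal to the kernel. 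So your route is self-contained, works for any symmetric matrix over $\Z/2\Z$, and cleanly separates the purely algebraic content from the surgery-presentation topology, whereas the paper packages the same fact as the existence of a mod-$2$ characteristic vector and leans on a reference that does not quite deliver it. Your closing caution — that one needs the mod-$2$ invariance of $\Is(Y,\lambda)$ from Section~2.1 rather than the tautological cobordism of Remark~2.7, since $\lambda_\Gamma$ need not be nullhomologous over $\Z$ — is also correct and matches how the paper concludes.
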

\begin{proof}
This follows immediately from the existence of a characteristic vector. More precisely, let $w = c_1v_1 + \cdots + c_nv_n$ be a vector in $H_2(W_\Gamma)$ such that $(w, x) \equiv (x, x)$ mod $2$ for all $x \in H_2(W_\Gamma)$, where $v_1,\dots,v_n$ are the vertices of $\Gamma$. Such a vector always exists---the set of such vectors fall into equivalence classes parameterised by $\spinc$ structures on $Y_{\Gamma}$, per Remark \ref{rmk:spinc}. Let $T_i$ be the glued-in solid torus corresponding to surgery on $v_i$, and denote the meridional disk in $T_i$ by $D_i$. Set
$S = \sum_i c_i D_i$. We claim that  the boundary of $S$ is a multicurve which coincides with $\lambda_{\Gamma}$ in mod $2$ homology, which proves the lemma. 

For this claim, let $a_{ij} = (v_i, v_j)$. The fact that $w$ is a characteristic vector, as applied to the evaluation $(w,v_i)$ and using the fact that $a_{ji}=a_{ij}$, implies that \begin{equation}\label{eq:ci}c_1a_{i1}  + \cdots + c_na_{in}\equiv a_{ii} \textrm{ mod }2\end{equation} for  $i=1,\dots, n$. On the other hand, the $j$th column vector of the matrix $(a_{ij})$ is precisely the relation in $H_1(Y_\Gamma)$ given by $\partial D_j$, in the generators $\mu_{v_1},\dots,\mu_{v_n}$. That is, \[\partial D_j = a_{1j}\cdot\mu_{v_1} + \dots + a_{nj}\cdot\mu_{v_n}.\] So, $\partial S$ is given by \[\sum_jc_j\partial D_j = \sum_j c_j(a_{1j}\cdot\mu_{v_1} + \dots + a_{nj}\cdot\mu_{v_n})=\sum_i (c_1a_{i1}  + \cdots + c_na_{in})\cdot \mu_{v_i},\] and the latter is equivalent mod 2 to \[\sum_i a_{ii}\cdot \mu_{v_i} \equiv  \lambda_\Gamma\textrm{ mod } 2,\] by \eqref{eq:ci}. This proves the claim.
\end{proof}

\begin{definition}[Standard disk system]\label{def:3.7}
Let $\Gamma$ be a negative-definite plumbing forest. Each component of $\lambda_\Gamma$ bounds a properly embedded disk in $W_\Gamma$, obtained by pushing the interior of the  obvious meridional disk in the surgery diagram $\mathcal{L}_\Gamma$ into the interior of $W_\Gamma$. We define the \textit{standard disk system $D_\Gamma$ in $W_{\Gamma}$} to be the union of these disks.\end{definition}

\begin{figure}[t]
\center
\includegraphics[scale=1]{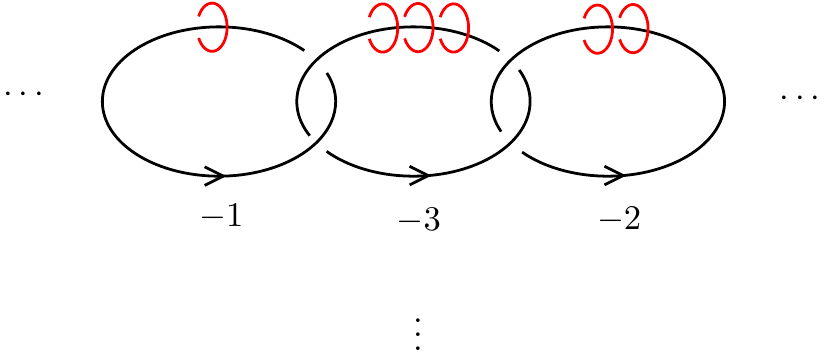}
\caption{A schematic link surgery diagram for $Y_{\Gamma} = \partial W_{\Gamma}$. The standard multicurve $\lambda_{\Gamma}$ consists of the collection of red meridians. Each of these is oriented so that it has linking number $+1$ with the relevant unknot. The standard disk system $D_\Gamma$ is obtained by pushing the obvious Seifert disks for these meridians  into the interior of $W_{\Gamma}$.}\label{fig:3.2}
\end{figure}

Let $\Gamma$ be a negative-definite plumbing forest. Let us fix, once and for all, a non-zero element \begin{equation}\label{eq:gen}x_0\in \Is(S^3, \emptyset)\cong \C.\end{equation} We define the vector space homomorphism \begin{equation}\label{eq:mapT}\Tsb : V_\Gamma \rightarrow \Is(Y_\Gamma, \lambda_\Gamma)\end{equation} by sending a generator $1\otimes k$ to
\[ 
\Tsb(1\otimes k) = \Is(W_\Gamma, D_\Gamma; k)(x_0)
\]
and extending $\C$-linearly. Here, \[\Is(W_\Gamma, D_\Gamma; k) : \Is(S^3, \emptyset) \rightarrow \Is(Y_\Gamma, \lambda_\Gamma)\] is the cobordism map associated to $W_\Gamma$, taken with the surface decoration $D_\Gamma$ and the characteristic element $k$.

\begin{remark}\label{rem:3.8}
Note that according to Lemma~\ref{lem:3.6}, there is an isomorphism 
\[
\Is(Y_\Gamma, \lambda_\Gamma) \cong \Is(Y_\Gamma, \emptyset) = \Is(Y_\Gamma).
\]
We may therefore view $\Tsb$ as a map with codomain $\Is(Y_\Gamma)$. The reader may then wonder why we have gone through the trouble of defining the multicurve and surface decorations $\lambda_\Gamma$ and $D_\Gamma$, rather than  using empty decorations everywhere. The reason, as discussed in Section~\ref{sec:2.3}, is that the cobordism maps in the surgery exact triangle in framed instanton homology require  decorations which may be nontrivial. A key element in the proof of Theorem~\ref{thm:1.1} is  to establish a corresponding surgery exact sequence in lattice homology which is natural with respect to the map  from lattice homology induced by $\Tsb$ (see below). As we shall see (for instance, in the proof of Lemma~\ref{lem:5.3}), this forces us to use nontrivial decorations in the definition of $\Tsb$.
\end{remark}

We  claim that $\Tsb$ respects the Type I and II relations of Definition \ref{def:3.1}, and therefore descends to a map from lattice homology:

\begin{proposition}\label{prop:3.9}
The map $\Tsb$ in \eqref{eq:mapT} descends to a well-defined complex-linear map
\[ 
\Ts : \Hl(\Gamma) \rightarrow \Iseven(Y_\Gamma, \lambda_\Gamma) \subset \Is(Y_\Gamma). 
\]
\end{proposition}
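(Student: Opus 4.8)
The plan is to verify directly that $\Tsb$ sends each of the two defining relations of Definition~\ref{def:3.1} to $0$ in $\Is(Y_\Gamma,\lambda_\Gamma)$, and then to check the degree statement. The main tool is Theorem~\ref{thm:1.5}: the disks in the standard disk system $D_\Gamma$ are capped off by the core disks of the surgery $2$-handles to produce embedded $2$-spheres in (a modification of) $W_\Gamma$, and these spheres are exactly the base spheres associated to the vertices of $\Gamma$. Concretely, for a vertex $v$ with $m(v)<0$, the meridian $\mu_v$ together with the core of the $2$-handle attached along $v$ forms a sphere $S_v$ of self-intersection $m(v)=v^2<0$, i.e.\ of self-intersection $-p$ with $p=-v^2=|m(v)|$. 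One has to be a little careful: $D_\Gamma$ consists of $|m(v)|$ parallel copies of the meridional disk for each $v$, so $\Is(W_\Gamma,D_\Gamma;k)$ is not literally a cobordism map with an embedded sphere decoration. The cleanest route is to use the sign relation (Theorem~\ref{thm:decomposition}(5)) together with Remark~\ref{rmk:dependence} to replace $D_\Gamma$ by a single meridional disk for each vertex (absorbing the change by multiplicities into signs which will be accounted for), or alternatively to phrase everything in terms of the cobordism $W_\Gamma$ with the \emph{single-copy} disk decoration and carry the correction factor $(-1)^{\cdots}$ along; I will take the latter approach since it matches the signs appearing in the Type~II relation.

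First I would handle the Type~I relation. Suppose $|\ev{k}{v}|>-v^2$ for some vertex $v$. Capping $D_\Gamma$ with the $2$-handle cores as above exhibits an embedded sphere $S_v\subset W_\Gamma$ with $[S_v]^2=v^2=-p$ and with $\ev{k}{[S_v]}=\ev{k}{v}$, since $v$ and $[S_v]$ represent the same homology class. Then $|\ev{k}{[S_v]}|>p$, so Theorem~\ref{thm:1.5}(1) gives $\Is(W_\Gamma,D_\Gamma;k)=0$, hence $\Tsb(1\otimes k)=0$, as required. (Strictly speaking Theorem~\ref{thm:1.5} is stated for cobordisms between a rational homology sphere and another $3$-manifold, which is satisfied here since $Y_{S^3}=S^3$ is a rational homology sphere and $Y_\Gamma$ is too; also $b_1(W_\Gamma)=0$ as $W_\Gamma$ is simply connected.)

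Next I would handle the Type~II relation. Suppose $\ev{k}{v}=\pm v^2=\mp p$ for some vertex $v$. By Theorem~\ref{thm:1.5}(2), $\Is(W_\Gamma,D_\Gamma;k)=(-1)^{D_\Gamma\cdot S_v}\,\Is(W_\Gamma,D_\Gamma;k\mp 2[S_v]^*)$. Since $[S_v]=v$ in $H_2(W_\Gamma)$, we have $[S_v]^*=v^*$, so the right-hand side is $(-1)^{D_\Gamma\cdot S_v}\,\Is(W_\Gamma,D_\Gamma;k\mp 2v^*)$. It remains to identify the sign $(-1)^{D_\Gamma\cdot S_v}$ with $(-1)^{v^2}$, which is the sign appearing in relation (II). This is the bookkeeping heart of the argument: $S_v$ is obtained by capping one component of $D_\Gamma$ (a meridian of $v$) with a handle core, and the remaining $|m(v)|-1$ parallel meridians of $v$ in $D_\Gamma$, together with meridians of the neighbors, each contribute to the intersection number $D_\Gamma\cdot S_v$; combined with the evaluation $\ev{v^*}{v}=v^2$ one reads off $D_\Gamma\cdot S_v\equiv v^2\pmod 2$. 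Applying $\Tsb$ to both sides then yields $\Tsb(1\otimes k)=(-1)^{v^2}\Tsb(1\otimes(k\mp 2v^*))$, matching relation (II). Therefore $\Tsb$ factors through $\Hl(\Gamma)$, giving a well-defined linear map $\Ts$.

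Finally, for the statement that the image lies in $\Iseven(Y_\Gamma,\lambda_\Gamma)$: the cobordism map $\Is(W_\Gamma,D_\Gamma;k)$ has a definite $\Z/2\Z$-degree, computed by \eqref{eqn:degree} applied to $W_\Gamma$ (with a ball removed), namely $-\tfrac{3}{2}(\chi(W_\Gamma)+\sigma(W_\Gamma))+\tfrac12(b_1(Y_\Gamma)-b_1(S^3))\bmod 2$. Since $Y_\Gamma$ and $S^3$ are rational homology spheres, $b_1=0$ for both, and for a negative-definite plumbing on $n$ vertices $\chi(W_\Gamma)=n+1$ (after removing the ball, $\chi$ drops by $1$, giving $n$) and $\sigma(W_\Gamma)=-n$, so the degree is $-\tfrac32(n-n)=0$; thus $x_0\in\Is(S^3,\emptyset)$, which sits in even grading (it is the generator of $\Is(S^3)\cong\C$ in degree $0$), is sent to the even part of $\Is(Y_\Gamma,\lambda_\Gamma)$. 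Hence $\Ts$ lands in $\Iseven(Y_\Gamma,\lambda_\Gamma)\subset\Is(Y_\Gamma)$ as claimed.

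The step I expect to be the main obstacle is the precise sign computation identifying $(-1)^{D_\Gamma\cdot S_v}$ with $(-1)^{v^2}$ in the Type~II case: one must track orientations of the parallel meridian copies, the handle core, and the push-ins defining $D_\Gamma$, and reconcile them with the sign conventions in Theorem~\ref{thm:1.5}(2) and the (nonstandard, edges-labeled-$-1$) intersection convention for $\Gamma$. Everything else is a direct application of Theorem~\ref{thm:1.5} and the degree formula \eqref{eqn:degree}.
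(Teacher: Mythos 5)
Your approach is logically sound and arrives at the right conclusion, but it differs from the paper's proof in a way worth noting. The paper proves Proposition~\ref{prop:3.9} \emph{before} Theorem~\ref{thm:1.5}, by factoring $W_\Gamma = W_p \cup W_p^c$ along the $L(p,1)$ boundary of a tubular neighborhood of the base sphere $v$, applying the composition law of Theorem~\ref{thm:decomposition}, and then invoking Lemmas~\ref{lem:4.1} and~\ref{lem:4.3} for the $W_p$ factor. You instead invoke Theorem~\ref{thm:1.5} directly on $(W_\Gamma, D_\Gamma)$. This is not circular---the paper's proof of Theorem~\ref{thm:1.5} does not use Proposition~\ref{prop:3.9} as a hypothesis, only the same Lemmas~\ref{lem:4.1} and~\ref{lem:4.3}---but the paper's ordering is the more economical one: by restricting to $W_p$ with its standard disk system $D_p$, the sign $(-1)^{v^2}$ in the Type~II relation is already baked into Lemma~\ref{lem:4.3}, and no separate sign verification is needed. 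Your route requires the extra computation $D_\Gamma \cdot S_v \equiv v^2 \pmod 2$, which you identify as the ``bookkeeping heart'' but leave unverified. In fact it is straightforward: a pushed-in meridional disk for a vertex $w$ meets the base sphere $S_v$ algebraically $\delta_{wv}$ times, so $D_\Gamma$, containing $|m(v)| = p$ parallel meridional disks of $v$, satisfies $D_\Gamma \cdot S_v = p = -v^2 \equiv v^2 \pmod 2$; so the sign does match.

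One small geometric slip worth correcting: you describe $S_v$ as ``the meridian $\mu_v$ together with the core of the $2$-handle attached along $v$.'' This is not a sphere---the $2$-handle core has boundary the attaching circle $v$, not $\mu_v$. The embedded $2$-sphere you want is the standard base sphere, formed by capping the Seifert disk of the unknot $v \subset B^4$ with the $2$-handle core; this has self-intersection $m(v) = v^2$ and represents the class $v$ in $H_2(W_\Gamma)$, which is what your application of Theorem~\ref{thm:1.5} needs. The remainder of the argument (the Type~I vanishing, the identification $[S_v]^* = v^*$, and the degree computation using $\chi(W_\Gamma \setminus B^4) = n$, $\sigma(W_\Gamma) = -n$) is correct and matches the paper's.
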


Proposition~\ref{prop:3.9} follows from a special case of the adjunction relations of Theorem \ref{thm:1.5}. The proof of these relations relies on a computation of  the  maps associated to the 1-vertex cobordisms  in Example \ref{ex:3.3},  carried out in Section~\ref{sec:4} below. We therefore postpone the proof of Proposition \ref{prop:3.9} until  the end of the next section.

\begin{remark} Although $\Hl(\Gamma)$ is defined using the specific plumbing forest $\Gamma$, the isomorphism class of $\Hl(\Gamma)$ is an invariant of $Y_\Gamma$. A proof of this appears in several places in the literature; see, for example, \cite[Proposition 3.4.2]{Nemethi}. 
In this paper, we will  use the following: let $\Gamma'$ and $\Gamma_{+1}$ be two plumbing forests such that $\Gamma_{+1}$ is obtained from $\Gamma'$ by blowing down a leaf with framing $-1$.\footnote{This choice of notation will become clear presently; see Appendix~\ref{sec:appendix}.} There is  a diffeomorphism $Y_{\Gamma'} \cong Y_{\Gamma_{+1}}$ which identifies the homology classes of   $\lambda_{\Gamma'}$ and $\lambda_{\Gamma_{+1}}$. We claim, moreover, that there is an isomorphism from $\Hl(\Gamma')$ to $\Hl(\Gamma_{+1})$ which makes the square
\[
\xymatrix@C=3.5em{
& \Is(Y_{\Gamma'}, \lambda_{\Gamma'}) \ar[r]^{\cong} & \Is(Y_{\Gamma_{+1}}, \lambda_{\Gamma_{+1}}) \\
& \Hl(\Gamma') \ar[r]^{\cong} \ar[u]^{\Ts} & \Hl(\Gamma_{+1}) \ar[u]^-{\Ts}
}
\]
commute. Here, the top row is the isomorphism afforded by the diffeomorphism $Y_{\Gamma'} \cong Y_{\Gamma_{+1}}$ and Remark~\ref{rem:tautological}. We prove this claim in Lemma \ref{lem:A.2} of Appendix~\ref{sec:appendix} (see also \cite[Proposition 2.5]{OSplumbed}), and   use it in Section \ref{sec:6} to prove  various isomorphisms relating lattice homology and framed instanton homology, toward the proof of Theorem \ref{thm:1.1}.
\end{remark}


\section{The adjunction relations}\label{sec:4}
We now turn to a careful analysis of the maps induced by the 1-vertex cobordisms  in Example \ref{ex:3.3}, and use this  to prove the adjunction relations of Theorem \ref{thm:1.5}. Along the way, we    prove Proposition \ref{prop:3.9},  which is a special case of these relations, and  asserts  that the map $\Tsb$ in \eqref{eq:mapT} descends to a map \[\Ts:\Hl(\Gamma)\to \Iseven(Y_\Gamma,\lambda_\Gamma).\] In fact, we  prove more, namely that $\Ts$ is an isomorphism for the graphs  associated to 1-vertex cobordisms; see Proposition \ref{prop:4.4} below.

Let us  establish some notation. Let $p>0$ be an integer, so that $L(p, 1)$ is $(-p)$-surgery on an unknot $v$. There is  a surgery triad given by the sequence of manifolds $S^3, L(p, 1), L(p-1, 1)$, associated to the $\infty, (-p), (1-p)$-surgeries on $v$. We define the following:

\begin{enumerate}
\item As in Example \ref{ex:3.3}, let \[W_p:S^3 \to L(p,1)\] be the 1-vertex cobordism represented by a graph with a single  vertex $v$ with framing $-p$. Let  $k_i$ be the characteristic element in $\text{Hom}_\Z(H_2(W_p), \Z)$ with 
\[
\ev{k_i}{v} = i
\]
(so that $i \equiv p$ mod $2$). We equip $L(p, 1)$ with the standard multicurve of Definition~\ref{def:3.5}, which we denote by $\lambda_p$, and we equip $W_p$ with the  standard disk system of Definition~\ref{def:3.7}, which we denote by $D_p$, associated to the  graph with vertex $v$.

\item Let \[W_{p, p-1}:L(p,1)\to L(p-1,1)\] be the cobordism obtained by attaching a  $(-1)$-framed 2-handle  along a (negatively-oriented) meridian $x$ of $v$ in $L(p,1)$. Let \[\smash{W_p \cup W_{p, p-1}}:S^3 \to L(p-1,1)\] be the composition of $W_p$ with $W_{p,p-1}$. Its second homology is generated by $v$ and $x$ (we are also using $v$ and $x$ here to denote the  $2$-spheres obtained by capping off the disks in $B^4$ bounded by the \emph{unknots} $v$ and $x$ with the corresponding $2$-handle cores). From this perspective, $H_2(W_{p, p-1}) \cong \Z$ is generated by the sphere \[x'=-px+v\] of square $-p(p-1)$, as $x'$ is the primitive integer linear combination of $x$ and $v$ which does not intersect $v$; see  Figure~\ref{fig:4.1}. Let $s_j$ be the characteristic element in $\text{Hom}_\Z(H_2(W_{p,p-1}), \Z)$ with 
\[
\ev{s_j}{x'} = j.
\]
Let $D_{p, p-1}$ be the properly embedded surface in $W_{p, p-1}$ obtained from the cyclinder $C_{\lambda_p}$ over $\lambda_p$ by capping off one of its $p$ outgoing boundary components (a copy of $\mu_v$ which we can identify with $x$) with the core of the $2$-handle attached along $x$.  The incoming boundary of $D_{p, p-1}$ is   the standard multicurve $\lambda_p$ in $L(p,1)$, and its outgoing boundary is the standard multicurve $\lambda_{p-1}$ in $L(p-1, 1)$.

\item The composition of $W_p$ with $W_{p,p-1}$ is diffeomorphic to the blow-up of $W_{p-1}$:
\[
\smash{W_p \cup W_{p, p-1} \cong W_{p-1} \# \overline{\CPr}^2}. 
\]
As a blow-up, the most natural basis for the second homology is given by the pair $\{v', x\}$, where $v'=v-x$; see Figure~\ref{fig:4.1}. 
Let  $t_{a, b}$ denote the characteristic element for this blow-up defined by 
\[
\ev{t_{a, b}}{v'} = a \text{ and } \ev{t_{a, b}}{x} = b. 
\]
Since $x' = -px+v$ in the composite cobordism, we have \[\ev{t_{a, b}}{v} = a + b \text{ and } \ev{t_{a, b}}{x'} = a - (p-1)b,\] which implies that $t_{a, b}$ restricts to $k_{a+b}$ on $W_p$ and to $s_{a - (p-1)b}$ on $W_{p, p-1}$. The union  \[D_p\cup D_{p, p-1}\subset W_p\cup W_{p,p-1},\]  viewed as a properly embedded surface  in the blowup of $W_p$,   is easily seen to be homologous to the union of the usual disk system $D_{p-1}$ for $W_{p-1}$ with a  copy of the exceptional divisor $E$ (which is in fact the 2-sphere $x$).
\end{enumerate}

\begin{figure}[t]
\center
\includegraphics[scale=0.8]{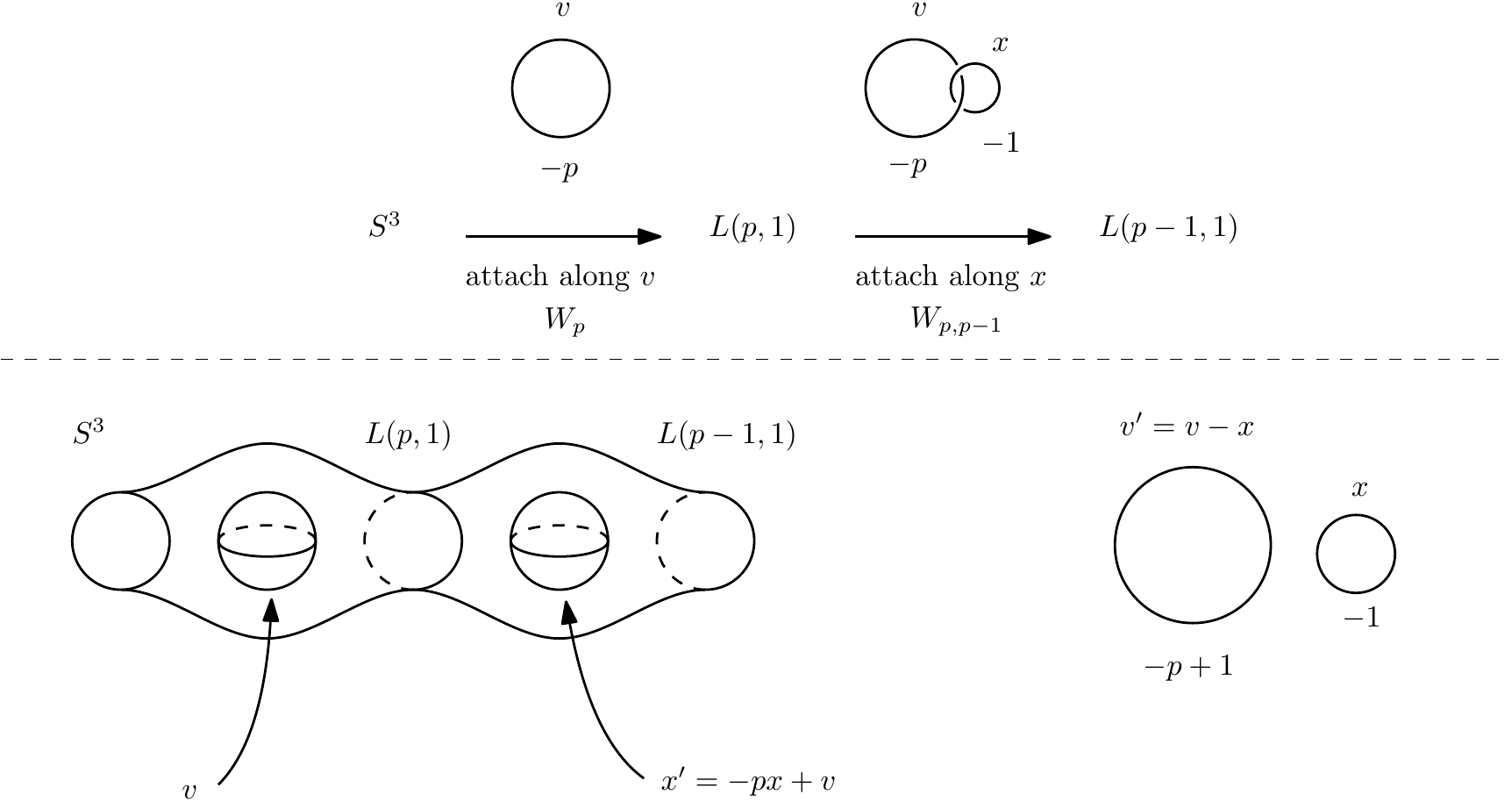}\caption{Top:  $W_p$ and $W_{p, p-1}$. Bottom: two views of $W_p \cup W_{p, p-1}\cong W_{p-1} \# \overline{\CPr}^2$. The bottom left shows the generators of the second homologies of $W_p$ and $W_{p, p-1}$. Note that $v$ and $x'$ do not intersect. 
The bottom right shows $W_p \cup W_{p, p-1}$ as a blow-up of $W_{p-1}$.}\label{fig:4.1}
\end{figure}

\begin{remark} Let $\lambda \subset S^3$ be the multicurve consisting of $p-1$ copies of the meridian $\mu_v$. Then the surface $D_{p, p-1}$ is   precisely the surface $\Sigma_b$  in the notation of Theorem \ref{exact}, for the surgery triad $S^3,L(p,1),L(p-1,1)$ associated to surgeries on the unknot $v$. Likewise, $D_p$ is  the composition of the surface $\Sigma_a$ therein with the  surface in the  tautological cobordism $(S^3,\emptyset)\to (S^3, (p-1)\cdot \mu_v)$ obtained by capping these meridians with disks. Recall that the latter cobordism is  what is used to canonically identify  $\smash{\Is(S^3, \emptyset)}$ with $\smash{\Is(S^3, (p-1) \cdot \mu_v)}$, per Remark~\ref{rem:tautological}. We therefore have a surgery exact triangle \[
\cdots \rightarrow \Is(S^3, \emptyset) \xrightarrow{\Is(W_p, D_p)} \Is(L(p,1), \lambda_p) \xrightarrow{\Is(W_{p,p-1}, D_{p,p-1})} \Is(L(p-1,1), \lambda_{p-1}) \rightarrow \cdots
\]  by  Theorem \ref{exact} and Remark~\ref{rem:tautological}.
\end{remark}

Lemmas \ref{lem:4.1} and \ref{lem:4.3} below are the keys  for  both  Theorem \ref{thm:1.1} and Proposition \ref{prop:3.9}. In particular, these two lemmas will be used to show that the map \[\Tsb : V_\Gamma \rightarrow \Is(Y, \lambda_\Gamma)\] in \eqref{eq:mapT} respects the Type I and  II relations of Definition \ref{def:3.1}, respectively, and thus descends to a map \[\Ts : \Hl (\Gamma) \rightarrow \Is(Y, \lambda_\Gamma).\] In both lemmas, we will use the following notation. For any integer $p > 0$ and any integer $i \equiv p$ mod $2$, let
\[ 
x_{p, i} = \Tsb(1\otimes k_i) = \Is(W_p, D_p; k_i)(x_0) \in \Is(L(p, 1), \lambda_p),
\]
where $x_0$ is the fixed generator of $\Is(S^3, \emptyset)\cong \C$ from \eqref{eq:gen}.

\begin{lemma}\label{lem:4.1} If $|i| > p$, then $x_{p, i} = 0$.
\end{lemma}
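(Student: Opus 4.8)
The plan is to compute the map $\Is(W_p, D_p; k_i)$ directly using the blow-up formula together with the surgery exact triangle relating $S^3$, $L(p,1)$, and $L(p-1,1)$. The key observation is that the composite cobordism $W_p \cup W_{p,p-1}$ is diffeomorphic to the blow-up $W_{p-1} \# \overline{\CPr}^2$, with $D_p \cup D_{p,p-1}$ homologous to $D_{p-1}$ union the exceptional divisor $E = x$. So I would first apply the composition law (Theorem~\ref{thm:decomposition}(3)) to write
\[
\Is(W_{p,p-1}, D_{p,p-1}; s_j) \circ \Is(W_p, D_p; k_i) = \sum_{\substack{t_{a,b} \text{ with } a+b = i \\ a - (p-1)b = j}} \Is(W_{p-1} \# \overline{\CPr}^2, D_{p-1} \cup E; t_{a,b}),
\]
and then invoke the blow-up formula (Theorem~\ref{thm:decomposition}(4)): the summand on the right vanishes unless the coefficient of $E^*$ in $t_{a,b}$ is $\pm 1$, i.e. unless $\ev{t_{a,b}}{x} = b = \pm 1$. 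For $b = \pm 1$ it equals $\tfrac12 \Is(W_{p-1}, D_{p-1}; t_{a,b}|_{W_{p-1}})$.

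Next I would run the argument by induction on $p$. The base case $p = 0$ (or $p=1$) is trivial or immediate from Example~\ref{ex:3.3}. For the inductive step, suppose $|i| > p$; by symmetry (the relation $k_{-p} = (-1)^p k_p$ and an analogous symmetry of $x_{p,i}$ under negation, which follows from the sign relation Theorem~\ref{thm:decomposition}(5)) it suffices to treat $i > p$, so $i \geq p+2$. I would feed $x_{p,i}$ into the next map in the surgery exact triangle and show that $\Is(W_{p,p-1}, D_{p,p-1}; s_j)(x_{p,i}) = 0$ for every $j$: by the composition-plus-blow-up computation above, this is a sum over $b = \pm 1$ of terms $\tfrac12 \Is(W_{p-1}, D_{p-1}; k_{i \mp 1})(x_0) = \tfrac12 x_{p-1, i\mp 1}$, and since $i \geq p+2$ we have $|i \mp 1| \geq p+1 > p - 1$, so by the inductive hypothesis all these terms vanish. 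Hence $x_{p,i}$ lies in the kernel of $\Is(W_{p,p-1}, D_{p,p-1})$. By exactness of the triangle $\Is(S^3,\emptyset) \to \Is(L(p,1),\lambda_p) \to \Is(L(p-1,1),\lambda_{p-1})$, this kernel is the image of $\Is(W_p, D_p)$. To conclude $x_{p,i} = 0$ I would then use a dimension/Euler-characteristic count: by \eqref{eq:1.1} the total dimension of $\Is(L(p,1))$ is $p$, and the $p$ classes $x_{p,-p}, x_{p,-p+2}, \dots, x_{p,p}$ with $x_{p,-p} = (-1)^p x_{p,p}$ (again from the sign relation) already span a space of the expected dimension $p$ once one knows they are exactly the image — so there is no room for an extra nonzero class $x_{p,i}$ with $|i|>p$ unless it coincides with one of these or vanishes; parity and the explicit identification force it to vanish.

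The main obstacle I expect is controlling the exact relationship between $x_{p,i}$ and its "reflection" under $i \mapsto -i$, and more generally pinning down enough of the structure of the triangle to rule out $x_{p,i}$ being a nonzero element of the image rather than zero. In other words, exactness only tells us $x_{p,i} \in \operatorname{im}\Is(W_p, D_p)$, which is automatic, so the real work is a careful bookkeeping argument — presumably run simultaneously with the computation of the full set $\{x_{p,i}\}$ (this is likely what Lemma~\ref{lem:4.3} and Proposition~\ref{prop:4.4} accomplish in tandem) — showing that the classes $x_{p,i}$ for $|i| \le p$ already account for everything, via the surgery triangle and the inductively known structure on $L(p-1,1)$. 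An alternative, possibly cleaner route would be to identify $x_{p,i}$ directly with an explicit eigenvector-type element using the known computation of $\Is$ of lens spaces and the action of the cobordism maps, sidestepping the exact triangle; but given that the paper emphasizes the surgery exact triangle as a central tool, the inductive argument above is the approach I would pursue first. The sign relation Theorem~\ref{thm:decomposition}(5) and the blow-up formula are the two technical inputs whose precise application to the decorations $D_p$, $D_{p,p-1}$ will require the most care.
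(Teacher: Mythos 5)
Your approach is genuinely different from the paper's. The paper proves Lemma~\ref{lem:4.1} directly, without any induction, by composing $W_p$ with an auxiliary cobordism $W$ (the trace of $(+2)$-surgery on $T_{2,5}$ inside $L(p,1)$), whose second homology contains a positive-square class $y$ represented by a genus-2 surface. Because $\Is(W, C_{\lambda_p})$ is injective (imported from \cite{BSLspace}) and decomposes only along the class with $\ev{\cdot}{y}=0$, non-vanishing of $\Is(W_p,D_p;k_i)$ forces non-vanishing of the composite map on the characteristic element $k_i\cup u_0$; the adjunction inequality applied to the class $v+ny$ (square $2n^2-p>0$) then yields $|i|\le p$ immediately. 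Your proposed route instead uses the composition law and blow-up formula relative to $W_p\cup W_{p,p-1}$, together with an induction on $p$. The trade-off is that the paper's argument is completely direct (no simultaneous induction with Lemma~\ref{lem:4.3}, no appeal to the surgery triangle for Lemma~\ref{lem:4.1} itself) but relies on the external computation of $\Is(S^3_2(T_{2,5}))$; your argument is more self-contained in that it stays inside the composition/blow-up/surgery-triangle toolkit, but it forces you to interleave the inductions for Lemmas~\ref{lem:4.1} and \ref{lem:4.3}.

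The one real gap is in your final step. You correctly observe that exactness alone is useless (it only tells you $x_{p,i}\in\operatorname{im}\Is(W_p,D_p)$), and you propose a ``dimension/Euler-characteristic count'' to finish. That framing does not close the argument: knowing that $\{x_{p,-p},\dots,x_{p,p-2}\}$ spans a $p$-dimensional space does not by itself prevent $x_{p,i}$ (for $|i|>p$) from being a nonzero linear combination of them. What actually works is the coefficient-by-coefficient argument implicit in the proof of Lemma~\ref{lem:4.3}: writing $x_{p,i}=\sum_\ell a_\ell\, x_{p,\ell}$ in the basis $\{x_{p,p-2},\dots,x_{p,-p}\}$ (which requires Lemma~\ref{lem:4.3} at level $p$, available if one proves 4.3 for $p$ before 4.1 for $p$ in the interleaved induction --- note the paper's proof of 4.3 only uses 4.1 at level $p-1$), and applying each $\Is(W_{p,p-1},D_{p,p-1};s_j)$. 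For $|i|>p$, your blow-up/composition computation sends $x_{p,i}$ to scalar multiples of $x_{p-1,i\mp1}$, both of which vanish by the inductive hypothesis since $|i\mp1|>p-1$; meanwhile the same maps send each basis vector $x_{p,\ell}$ (for suitable $j$) to a nonzero multiple of $x_{p-1,\ell\pm 1}$. Comparing, one gets $a_\ell=0$ for all $\ell$, hence $x_{p,i}=0$. This is what your ``parity and the explicit identification'' needs to become, and it is slightly more than a dimension count.
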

\begin{proof}
Our argument is based on \cite[Section 5.3]{BSLspace}.  The key idea is that we cannot apply the adjunction inequality of Theorem~\ref{thm:decomposition} to the generator $v$ of $H_2(W_p)$, because it has negative self-intersection.  We get around this by composing with a cobordism $W$ that contains a class $y$ of positive self-intersection, and then applying the adjunction inequality to a surface in the class $v+ny$ for $n \gg 0$.  If $W$ induces an injection on Floer homology, then any vanishing results we can deduce for $v+ny$ will imply some sort of vanishing results for the original class $v$.

Let $T_{2,5}$ be a $(2,5)$-torus knot contained in a small ball inside $L(p, 1)$. Let \[(W,C_{\lambda_p}):(L(p, 1),\lambda_p) \to (L(p, 1) \# S^3_{2}(T_{2,5}),\lambda_p)\]  be the cobordism obtained by attaching a $(+2)$-framed 2-handle along $T_{2,5} \subset L(p,1)$, where $\smash{C_{\lambda_p}}$ is the cylinder over $\lambda_p$. The second homology of $W$ is generated by an  element $y$, represented by a genus-2 surface, with $y^2 = +2$. Consider the class $v + ny \in H_2(W_p \cup W)$, where $v$ is the usual generator of $W_p$ and $n$ is a large positive integer. This has self-intersection $2n^2 - p$, which is greater than zero if $n$ is large enough. Note that the class of $v + ny$ can be represented by an embedded surface $\Sigma$ with Euler characteristic $\chi(\Sigma)=-2n^2$. 

It follows from the proof of \cite[Proposition 5.13]{BSLspace} (see Remark~\ref{rem:4.2} below) that the  map \[\Is(W, C_{\lambda_p}):\Is(L(p, 1),\lambda_p) \to \Is(L(p, 1) \# S^3_{2}(T_{2,5}),\lambda_p)\] is injective. The adjunction inequality of Theorem \ref{thm:decomposition} implies that the summand $\Is(W, C_{\lambda_p}; u)$ of this map is non-zero only if  $\ev{u}{y} = 0$. Thus, \[\Is(W, C_{\lambda_p})=\Is(W, C_{\lambda_p}; u_0),\] where $u_0$ is the characteristic element with $\ev{u_0}{y}=0$. Now, given any characteristic element $k_i$ on $W_p$, consider the characteristic element $k_i \cup u_0$ on $W_p \cup W$ restricting to $k_i$ on $W_p$ and $u_0$ on $W$. Suppose the  map $\Is(W_p, D_p; k_i)$ is non-zero. Then the composition
\[
\Is(W, C_{\lambda_p}; u_0) \circ \Is(W_p, D_p; k_i) = \Is(W_p \cup W, D_p \cup C_{\lambda_p}; k_i \cup u_0)
\]
is non-zero by the injectivity of the first map. By another application of the adjunction inequality, applied to the surface representing $v + ny$, this can only occur as long as
\[
|i| + (2n^2 - p)= |\ev{k_i \cup u_0}{\Sigma}| + \Sigma \cdot \Sigma \leq -\chi(\Sigma)= 2n^2. 
\]
Hence $|i| \leq p$, and we are done.
\end{proof}

\begin{remark}\label{rem:4.2}
Proposition 5.13 of \cite{BSLspace} states that the map induced by the trace of $1$-surgery on $T_{2,5}\subset S^3$ is injective. However, it is shown in the course of the proof that $\smash{\dim \Is(S^3_{2}(T_{2,5})) = 4}$. Note also that $\smash{\dim \Is(S^3_{3}(T_{2,5})) = 3}$ by \cite[Lemma 5.11]{BSLspace}. The surgery exact triangle  associated to the  triad $S^3, \smash{S^3_2(T_{2,5})}, \smash{S^3_3(T_{2,5})}$ then shows that the map induced by the trace of 2-surgery on $T_{2, 5} \subset S^3$ is injective. The  case in which $T_{2, 5}$ is contained inside a small ball in  $L(p,1)$ then follows by the naturality of cobordism maps with respect to split cobordisms, as in the proof of \cite[Lemma 5.14]{BSLspace}.
\end{remark}

The second of our two key lemmas is the following.

\begin{lemma}\label{lem:4.3}
The elements
\[
\{x_{p, -p}, x_{p, -p +2}, \ldots, x_{p, p - 2}, x_{p, p}\}
\]
are linearly independent, except for the single relation $x_{p, -p} = (-1)^p x_{p, p}$.
\end{lemma}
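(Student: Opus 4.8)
The plan is to induct on $p$, using the surgery exact triangle associated to the triad $S^3, L(p,1), L(p-1,1)$. Write $N_q = \Is(L(q,1),\lambda_q)$, and recall from the discussion preceding the lemma that the triangle reads
\[
\cdots \to \Is(S^3,\emptyset) \xrightarrow{F_p} N_p \xrightarrow{G_p} N_{p-1} \to \cdots,
\]
with $F_p = \Is(W_p, D_p)$ and $G_p = \Is(W_{p,p-1}, D_{p,p-1})$; note $F_p(x_0) = \sum_{|i|\le p} x_{p,i}$ by Lemma~\ref{lem:4.1} and the finiteness clause of Theorem~\ref{thm:decomposition}. The inductive hypothesis will be that the lemma holds for $p-1$ and moreover that $\{x_{p-1,j} : -(p-1) < j \le p-1\}$ is a basis of $N_{p-1}$, so $\dim N_{p-1} = p-1$. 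The base case $p=1$ is handled directly: $W_1$ is the blow-up of the product cobordism $I\times S^3$ at an interior point, and $D_1$ is homologous rel boundary to the exceptional sphere with reversed orientation, so the blow-up formula and sign relation of Theorem~\ref{thm:decomposition} compute $x_{1,\pm1}$ explicitly, yielding $x_{1,-1} = -x_{1,1}$ with $x_{1,1}\ne 0$.

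For the inductive step, the first task is to compute $G_p$ on generators. Using the composition law of Theorem~\ref{thm:decomposition} and the identification $W_p\cup W_{p,p-1}\cong W_{p-1}\#\overline{\CPr}^2$ (under which $D_p\cup D_{p,p-1}$ is homologous to $D_{p-1}\cup E$, with $E$ the exceptional sphere), the blow-up formula kills every characteristic element except those restricting to $\pm1$ on $E$, and the sign relation supplies the signs; the outcome is
\[
G_p(x_{p,i}) = \tfrac12\, x_{p-1,\,i-1} - \tfrac12\, x_{p-1,\,i+1}.
\]
Feeding in the inductive hypothesis and Lemma~\ref{lem:4.1} (which makes the outer terms $x_{p-1,\pm(p+1)}$ vanish), a telescoping argument shows that suitable partial sums of $G_p(x_{p,p}), G_p(x_{p,p-2}),\dots$ run through a basis of $N_{p-1}$, so $G_p$ is surjective. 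The exact triangle together with $\dim N_{p-1} = p-1$ then forces $\dim N_p = p$ and the injectivity of $F_p$; hence the triangle is a short exact sequence $0\to \Is(S^3,\emptyset)\to N_p\to N_{p-1}\to 0$, and $\ker G_p = \mathrm{im}\,F_p$ is the line spanned by $F_p(x_0)$. Since $F_p(x_0)$ lies in $\mathrm{span}\{x_{p,i}\}$, the computation of $G_p$ shows that $\mathrm{span}\{x_{p,i}: |i|\le p\}$ surjects onto $N_{p-1}$ with one-dimensional kernel, hence equals $N_p$; thus the $p+1$ elements $x_{p,-p},\dots,x_{p,p}$ span the $p$-dimensional space $N_p$, so the space of linear relations among them is exactly one-dimensional. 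Finally, applying the formula for $G_p$ and the extremal relation $x_{p-1,-(p-1)} = (-1)^{p-1}x_{p-1,p-1}$ from the inductive hypothesis gives $G_p\!\left(x_{p,-p} - (-1)^p x_{p,p}\right) = 0$, so this element lies in $\ker G_p = \mathrm{im}\,F_p$.

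It remains to upgrade "$x_{p,-p} - (-1)^p x_{p,p}\in\mathrm{im}\,F_p$" to "$x_{p,-p} = (-1)^p x_{p,p}$", which pins down the one-dimensional relation space as claimed; this is the crux, and I would prove it by a direct computation rather than through the exact sequence. The curve $\lambda_p$ is nullhomologous in $L(p,1)$, so $[D_p]$ lies in the image of $H_2(W_p;\Z)\to H_2(W_p,\partial W_p;\Z)$, and since $D_p$ is a union of $p = |m(v)|$ cocores of the two-handle it represents the class $-[v]$ there; hence, after composing with the (fixed, $k$-independent) reverse tautological cobordism, $\Is(W_p,D_p;k)$ is identified with a cobordism map whose decoration is the sphere $-v$. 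The sign relation of Theorem~\ref{thm:decomposition} rewrites this as $(-1)^{\epsilon(k)}$ times the map with a homologically trivial decoration, where $\epsilon(k_p) - \epsilon(k_{-p}) \equiv p \pmod 2$; combining this with the symmetry of the disk bundle $W_p$—an orientation-preserving involution acting as $-1$ on $H_2$, hence interchanging $k_p$ and $k_{-p} = -k_p$ while fixing the homologically trivial decoration—yields $x_{p,-p} = (-1)^p x_{p,p}$. The main obstacle is precisely this step: one must track carefully how $D_p$ interacts with the tautological-cobordism identifications and with the sign relation, and check that the induced involution on $N_p$ acts trivially on the one-dimensional image of $\Is(W_p,\cdot\,;k_p)$. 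With the relation in hand, the linear independence of the remaining $p$ elements—equivalently, that $\{x_{p,i}: -p < i \le p\}$ is a basis of $N_p$—is immediate from the one-dimensionality of the relation space established above, completing the induction.
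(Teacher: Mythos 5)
Your overall strategy matches the paper's: induct on $p$, use the composition $W_p\cup W_{p,p-1}\cong W_{p-1}\#\overline{\CPr}^2$ together with the blow-up formula and sign relation of Theorem~\ref{thm:decomposition}, and use the fact that the $x_{p-1,j}$'s are already understood. Your base case $p=1$ is essentially the paper's, and your formula $G_p(x_{p,i})=\tfrac12 x_{p-1,i-1}-\tfrac12 x_{p-1,i+1}$ is the correct sum over the two nonvanishing summand maps. The counting argument that $\{x_{p,i}:|i|\le p\}$ spans $N_p$ with a one-dimensional relation space also goes through.

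However, the final step has a genuine gap, and you acknowledge it. From $G_p\bigl(x_{p,-p}-(-1)^p x_{p,p}\bigr)=0$ you can only conclude that $x_{p,-p}-(-1)^p x_{p,p}$ is a scalar multiple of $F_p(x_0)=\sum_{|i|\le p}x_{p,i}$; nothing in the exact-triangle bookkeeping forces this scalar to be zero, and in fact any such multiple would still satisfy everything you have derived. Your proposed fix via an orientation-preserving involution of the disk bundle is plausible-sounding but not carried out: one would need to check that the involution acts as the identity on $\Is(L(p,1),\lambda_p)$ (not automatic for a diffeomorphism that is not obviously isotopic to the identity), and to track the decorations through the tautological-cobordism identifications; you flag this explicitly as the ``main obstacle.'' The paper closes this gap without any involution by using the \emph{individual} summand maps $\Is(W_{p,p-1},D_{p,p-1};s_j)$ rather than their sum $G_p$: writing $x_{p,p}=\sum_{i<p}a_i x_{p,i}+c\,x_{p,-p}$ and applying $\Is(W_{p,p-1},D_{p,p-1};s_j)$ for each nonzero $j$ isolates one coefficient at a time (since each $s_j$ pairs nontrivially with only $x_{p,j\pm p}$, and one of those vanishes by Lemma~\ref{lem:4.1}), killing all $a_i$; then $s_0$ compares $\tfrac12 x_{p-1,p-1}$ with $-\tfrac{c}{2}x_{p-1,-(p-1)}$, and the inductive relation $x_{p-1,-(p-1)}=(-1)^{p-1}x_{p-1,p-1}$ pins down $c=(-1)^p$. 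This is exactly the extra leverage that passing to $G_p$ throws away. You should replace the involution sketch with this separating-functional argument, which is already available from the computation you performed.
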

\begin{proof}
We proceed by induction. Suppose $p=1$. The composition of the cobordism \[(W_1, D_1):(S^3,\emptyset) \to (S^3,\lambda_1)\] with a  tautological cobordism $(S^3,\lambda_1) \to (S^3,\emptyset)$ is obtained from the identity cobordism $S^3\times I$ with trivial decoration by first blowing up,  and then changing the trivial decoration to the decoration given (up to sign) by the exceptional class $\alpha = \pm E$. The blow-up formula and sign relation of Theorem \ref{thm:decomposition} then tell us that \[x_{1,-1} = -x_{1,1} \neq 0,\] proving the lemma in this case.

Next,  assume  that $p\geq 2$ and  the lemma holds for $p - 1.$ By the composition law of Theorem \ref{thm:decomposition} and our  discussion in point (3) at the beginning of this section, we have that
\begin{equation}\label{eqn:comp}
\Is(W_p \cup W_{p, p-1}, D_p \cup D_{p, p-1}; t_{a,b}) = \Is(W_{p, p-1}, D_{p, p-1}; s_{a - (p-1)b}) \circ \Is(W_p, D_p; k_{a+b}).
\end{equation}
Since $W_p \cup W_{p, p-1}\cong W_{p-1}\# \overline{\CPr}^2$ and $t_{a,b}$ evaluates to $b$ on the exceptional sphere of the blow-up,  the map is zero unless $b=\pm1$ by the blow-up formula of Theorem \ref{thm:decomposition}. Moreover,  we have
\begin{align*}
\Is(W_{p-1} \# \overline{\CPr}^2, D_p \cup D_{p, p-1}; t_{a,\pm 1}) &= (\pm 1) \cdot \Is(W_{p-1} \# \overline{\CPr}^2, D_{p-1}; t_{a,\pm 1}) \\
&= (\pm 1/2) \cdot \Is(W_{p-1}, D_{p-1}; k_a),
\end{align*}
where the initial signs come from the sign relation of Theorem \ref{thm:decomposition} together with the fact,  noted earlier earlier in  this section, that $D_p \cup D_{p, p-1}$ is homologous to the  union of $D_{p-1}$ with the exceptional divisor. By Lemma \ref{lem:4.1}, this  map vanishes unless $|a|\leq p-1$. We have therefore found that the composition in  \eqref{eqn:comp} vanishes unless $|a| \leq p -1$ and $b = \pm 1$. Considering all pairs $(a, b)$ satisfying these inequalities, we obtain the calculation for the maps \[\Is(W_p,D_p;k_i) \textrm{ and } \Is(W_{p,p-1},D_{p,p-1}; s_j)\] displayed in Figure~\ref{fig:4.2}.

\begin{figure}[t]
\center
\includegraphics[scale=0.9]{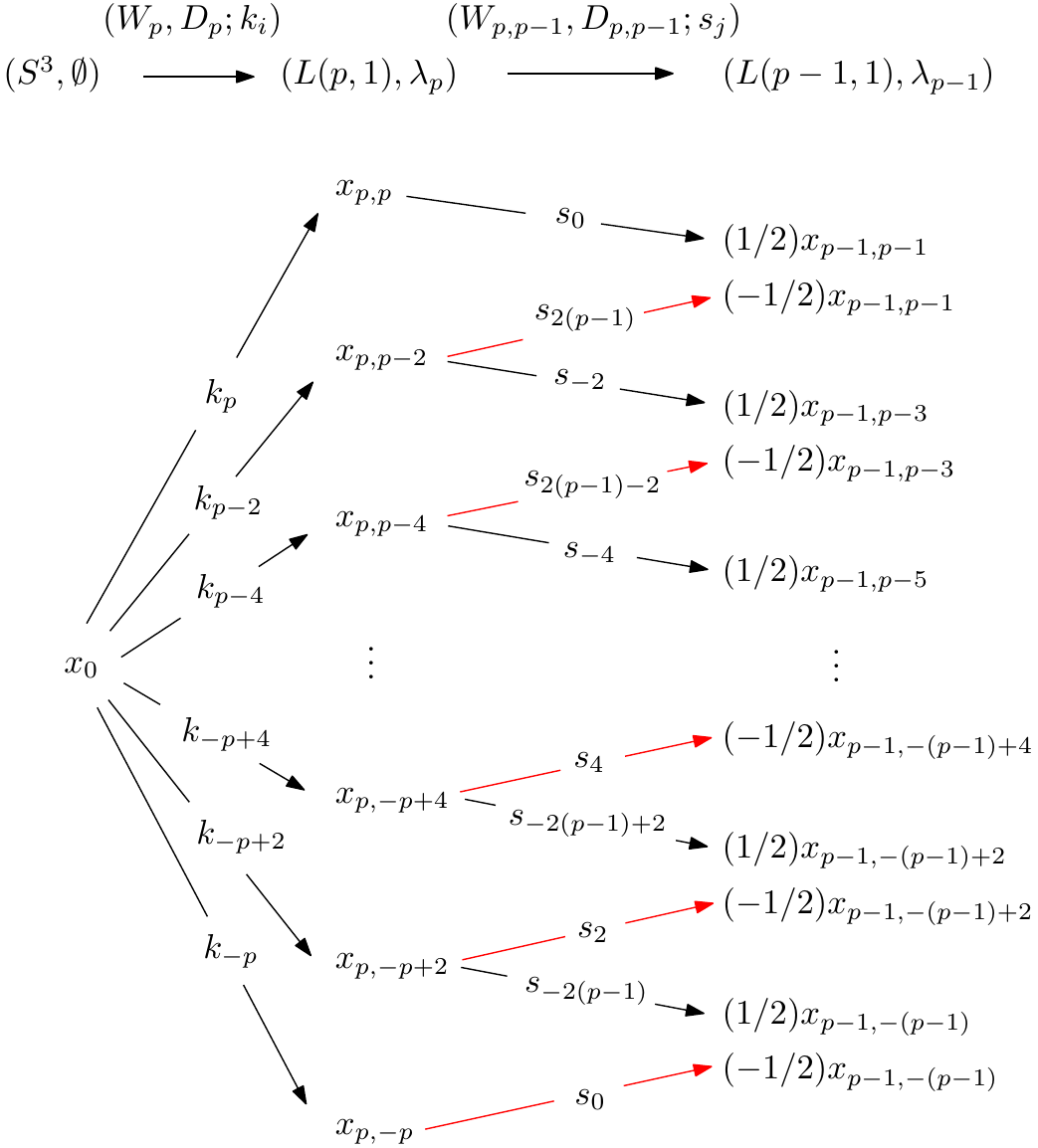}
\caption{Recall that $t_{a,b}$ restricts to $k_{a + b}$ on $W_p$ and $s_{a - (p-1)b}$ on $W_{p,p-1}$. Each pair $(a,b)$ gives one path from left to right in the above diagram. For example, $t_{p-1, +1}$ restricts to $k_p$ on $W_p$ and $s_0$ on $W_{p-1, p}$; the composition of these cobordism maps is represented by the uppermost path from left to right.}\label{fig:4.2}
\end{figure}

Given this calculation, it is not hard to see that the vectors $x_{p, p -2}, x_{p, p -4},\ldots ,x_{p, -p+2},x_{p,-p}$ form a basis for $\Is(L(p,1), \lambda_p) \cong \C^p$. Indeed, suppose
\[
a_{p -2}\cdot x_{p, p -2}+ a_{p -4} \cdot x_{p, p -4}+ \cdots +a_{-p+2}\cdot x_{p, -p+2}+ a_{-p}\cdot x_{p,-p} = 0.
\]
For  $i=0, \ldots, p-1$,  applying  the map $\Is(W_{p, p-1}, D_{p, p-1}; s_{2(p-1)-2i})$ to both sides of the above equation (see the red arrows in Figure~\ref{fig:4.2}) yields  \[(-1/2)\cdot a_{p-2i-2}\cdot x_{p-1, p -1-2i}=0.\] Since every $x_{p-1, p -1-2i}$ is nonzero by induction, this means  that $a_{p-2i-2}=0$ for all $i=0,.\dots,p-1$. We have thus shown that the $p$ vectors $x_{p, p -2}, x_{p, p -4},\ldots ,x_{p, -p+2},x_{p,-p}$ are linearly independent and therefore form a basis for $\Is(L(p,1), \lambda_p) \cong \C^p$ as claimed. 

It remains to prove that $x_{p,p}=(-1)^p\cdot x_{p,-p}$. For this, let us write 
\[
x_{p,p}= a_{p -2}\cdot x_{p, p -2}+ a_{p -4} \cdot x_{p, p -4}+\cdots +a_{-p+2}\cdot x_{p, -p+2}+ c\cdot x_{p,-p}.
\]
Applying $\Is(W_{p, p-1}, D_{p, p-1}; s_i)$ to both sides of this equation for each $i \neq 0$, and using our inductive assumption,  we may conclude that $x_{p,p}= c\cdot  x_{p,-p}$. Applying $\smash{\Is(W_{p, p-1}, D_{p, p-1}; s_0)}$ to both sides then yields $(1/2)\cdot x_{p-1,p-1}=c(-1/2)\cdot x_{p-1,-(p-1)}$. It follows from the inductive hypothesis that 
\[(1/2)\cdot x_{p-1,p-1}=c(-1/2)(-1)^{p-1}\cdot x_{p-1,p-1} \ .\] 
Hence $c=(-1)^p$, completing the proof. 
\end{proof}

We next prove that the map $\Ts$ of Proposition \ref{prop:3.9}  is well-defined and is an isomorphism for the 1-vertex plumbings in Example \ref{ex:3.3}:

\begin{proposition}\label{prop:4.4}
If $\Gamma$ is a graph with a single vertex with framing $-p<0$, then 
\[
\Ts : \Hl(\Gamma) \rightarrow \Is(L(p, 1), \lambda_p)
\]
is a well-defined isomorphism.
\end{proposition}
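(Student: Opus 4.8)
The plan is to derive Proposition~\ref{prop:4.4} directly from Lemmas~\ref{lem:4.1} and \ref{lem:4.3}, since these two lemmas together record precisely the behavior of the maps $x_{p,i} = \Tsb(1\otimes k_i)$ that mirrors the defining relations of $\Hl(\Gamma)$. First I would recall, from Example~\ref{ex:3.3}, that for the one-vertex graph $\Gamma$ with framing $-p$ we have $\Hl(\Gamma) = \mathrm{Span}_\C\{k_{-p},\dots,k_p\}$ with the single relation $k_{-p} = (-1)^p k_p$, so $\Hl(\Gamma)\cong\C^p$. The point is that the elementary relations of Definition~\ref{def:3.1}, specialized to a single vertex $v$ with $v^2=-p$, take exactly the following form: a Type~I relation kills $1\otimes k_i$ whenever $|i|>p$, and a Type~II relation applies only when $i=\pm p$, identifying $1\otimes k_{p} \sim (-1)^{p}\otimes k_{-p}$ (there being no adjacent vertex to produce a further Type~I collapse).

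Next I would check that $\Tsb$ respects these two relations on the nose. Lemma~\ref{lem:4.1} states $x_{p,i}=0$ for $|i|>p$, which is exactly the content needed for Type~I. For Type~II, Lemma~\ref{lem:4.3} gives the relation $x_{p,-p} = (-1)^p x_{p,p}$ among the extremal generators, which matches the relation $1\otimes k_{p}\sim (-1)^{p}\otimes k_{-p}$ in $\Hl(\Gamma)$ (noting $(-1)^{v^2}=(-1)^{-p}=(-1)^p$). Hence $\Tsb$ descends to a well-defined linear map $\Ts\colon\Hl(\Gamma)\to\Is(L(p,1),\lambda_p)$; this is the assertion of Proposition~\ref{prop:3.9} in this special case. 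To see that $\Ts$ lands in the even part $\Iseven$, I would invoke the degree formula \eqref{eqn:degree}: for $W_p$ (minus a ball, as a cobordism $S^3\to L(p,1)$) one has $\chi(W_p)=1$, $\sigma(W_p)=-1$, and $b_1(S^3)=b_1(L(p,1))=0$, so the mod-$2$ degree of $\Is(W_p,D_p)$ is $-\tfrac32(1-1)+0 \equiv 0$, placing each $x_{p,i}$ in even grading.

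Finally I would establish that $\Ts$ is an isomorphism. By Example~\ref{ex:3.3}, $\dim_\C \Hl(\Gamma) = p$, and by \eqref{eq:1.1} (Scaduto's Euler characteristic computation) together with the fact that $L(p,1)$ is an instanton L-space --- equivalently, since $\Is(L(p,1),\lambda_p)\cong\Is(L(p,1))$ by Lemma~\ref{lem:3.6} and $\dim\Is(L(p,1))=|H_1(L(p,1))|=p$ with everything in even grading --- we have $\dim_\C\Is(L(p,1),\lambda_p) = p$ as well. So it suffices to prove surjectivity (or injectivity). But Lemma~\ref{lem:4.3} asserts that the $p$ elements $x_{p,p-2},x_{p,p-4},\dots,x_{p,-p}$ are linearly independent in $\Is(L(p,1),\lambda_p)$; these are the images under $\Ts$ of the $p$ classes $1\otimes k_{p-2},\dots,1\otimes k_{-p}$, which span $\Hl(\Gamma)$. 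Therefore $\Ts$ is surjective, hence an isomorphism by the dimension count. The only genuine content here lies in Lemmas~\ref{lem:4.1} and \ref{lem:4.3}, which have already been proved; the remaining obstacle --- such as it is --- is the bookkeeping of signs and gradings, in particular matching $(-1)^{v^2}$ in the Type~II relation against the sign $(-1)^p$ coming out of Lemma~\ref{lem:4.3}, which I would verify carefully but expect to cause no trouble.
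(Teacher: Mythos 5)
Your proof is correct and follows essentially the same route as the paper's: Lemma~\ref{lem:4.1} gives the Type~I relation, Lemma~\ref{lem:4.3} gives the Type~II relation (with the sign matching $(-1)^{v^2}=(-1)^{-p}=(-1)^p$ exactly as you note), so $\Tsb$ descends to $\Ts$, and the dimension count $\dim\Hl(\Gamma)=p=\dim\Is(L(p,1),\lambda_p)$ together with the linear independence from Lemma~\ref{lem:4.3} forces $\Ts$ to be an isomorphism. The only cosmetic differences are that you phrase the final step in terms of surjectivity where the paper says injectivity (these are equivalent given the dimension count), and you include a grading observation not strictly needed for the statement as given.
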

\begin{proof}
Lemmas~\ref{lem:4.1} and \ref{lem:4.3}  immediately imply that the map \[\Tsb : V_\Gamma \rightarrow \Is(L(p, 1), \lambda_p)\]  in \eqref{eq:mapT} respects the Type I and  II relations of Definition \ref{def:3.1} for this $\Gamma$. This shows that it indeed descends to a map \[\Ts : \Hl(\Gamma) \rightarrow \Is(L(p, 1), \lambda_p),\] proving the well-definedness claim. To show that the induced map $\Ts$ is an isomorphism, simply note that  both groups are isomorphic to $\C^p$, and $\Ts$ is injective by Lemma~\ref{lem:4.3}.
\end{proof}

We now prove Proposition \ref{prop:3.9}, which states that the map \[\Ts:\Hl(\Gamma)\to \Is(Y_\Gamma,\lambda_\Gamma)\] induced by $\Tsb$ is well-defined for \emph{any} negative-definite plumbing forest $\Gamma$, with image in the evenly-graded summand of framed instanton homology.

\begin{proof}[Proof of Proposition \ref{prop:3.9}]
Let $\Gamma$ be a negative-definite plumbing forest.  To prove that $\Tsb$ satisfies the Type I relation of Definition \ref{def:3.1},  let $v$ be a vertex of $\Gamma$ with framing $-p<0$. Suppose $k$ is a characteristic element on $\Gamma$ such that $|\ev{k}{v}| > p$, so that $1\otimes k \sim 0$ in $\Hl(\Gamma)$. We must show that
\[
\Is(W_\Gamma, D_\Gamma; k)(x_0) = 0.
\]
Taking the normal bundle of $v$ in $W_\Gamma$ gives an embedding of the 1-vertex cobordism $W_p$ in $W_{\Gamma}$. Indeed, we can view $W_{\Gamma}$ as the composition of $W_p$ and the closure  $W_p^c$ of its complement.
The standard disk system $D_\Gamma\subset W_\Gamma$  intersects $W_p$ in its standard disk system  $D_p$. Let us denote the intersection of $D_\Gamma$ with $W_p^c$ by $D_p^c$. By the composition law of Theorem \ref{thm:decomposition}, we then have
\[
\Is(W_\Gamma, D_\Gamma; k)(x_0) = \Is(W_p^c, D_p^c; k|_{W_p^c}) \circ \Is(W_p, D_p; k|_{W_p})(x_0).
\]
By Lemma~\ref{lem:4.1}, this is zero, since  $\smash{\left|\ev{k|_{W_p}}{v}\right| = |\ev{k}{v}| > p}$. Noting that $k$ and $k \pm 2v^*$ restrict to the same characteristic element on $\smash{W_p^c}$, a similar argument, but using Lemma \ref{lem:4.3}, proves that $\Tsb$ respects the  Type II relation as well. This proves the well-definedness claim. The claim that $\Ts$ maps into the evenly-graded summand \[\Iseven(Y_\Gamma, \lambda_\Gamma)\subset\Is(Y_\Gamma, \lambda_\Gamma)\] follows immediately from the grading shift formula in \eqref{eqn:degree} and the fact that $\Is(S^3,\emptyset)\cong \C$ is supported in even grading.
\end{proof}

 Theorem~\ref{thm:1.5} follows by almost exactly the same reasoning as the proofs of Lemmas \ref{lem:4.1}, \ref{lem:4.3}, and Proposition \ref{prop:3.9}; one simply has to take into account more general decorations, largely using the sign relation of Theorem \ref{thm:decomposition}.

\begin{proof}[Proof of Theorem~\ref{thm:1.5}]
Let us first suppose that $W = W_p$ is the 1-vertex cobordism of Example~\ref{ex:3.3}, with embedded 2-sphere $\Sigma$ satisfying \[\Sigma\cdot\Sigma=-p<0.\] As in    Remark \ref{rmk:dependence}, the map $\Is(W, \nu; s)$ depends (up to an overall  sign, independent of $s$) only on the class $[\nu] \in H_2(W, \partial W; \Z/2\Z).$
Assume  that $p$ is odd. Then \[H_2(W, \partial W; \Z/2\Z) \cong \Z/2\Z\]  is generated by the class of either the standard disk system $D_p$ or $\Sigma$ (which coincide). There are thus two possibilities for $[\nu]$. If $[\nu]\neq 0$, then $[\nu] = [D_p]$ and the fact that $\Is(W, \nu; s)$ satisfies the identities (1) and (2)  of Theorem \ref{thm:1.5} follows directly from  Lemmas \ref{lem:4.1} and \ref{lem:4.3}. Otherwise,  if $[\nu]=0$, then the class of $\nu$ is represented by  $D_p \cup \Sigma$. A quick  application of the sign relation in Theorem \ref{thm:decomposition}, comparing $\Is(W, \nu; s)$ with $\Is(W, \nu-\Sigma; s)$,  and the preceding case shows that $\Is(W, \nu; s)$ satisfies  identities (1) and (2) in this case as well.

Now assume that $p$ is even. Then \[H_2(W, \partial W; \Z/2\Z) \cong \Z/2\Z \oplus \Z/2\Z\]  is generated by the class of $\Sigma$ together with the class of the disk Poincar\'e dual to $\Sigma$. There are thus four possibilities for $[\nu]$. If $[\nu]=0$, then $\nu$ can be represented by $D_p$, since $D_p$ is an even number (namely, $p$) of copies of the dual disk to $\Sigma$. In this case, the fact that $\Is(W, \nu; s)$ satisfies the identities (1) and (2)   follows directly from  Lemmas \ref{lem:4.1} and \ref{lem:4.3}.
Theorem \ref{thm:1.5} in the case $[\nu] = [\Sigma]$ then follows from the preceding by an application of the sign relation of Theorem \ref{thm:decomposition}. Theorem \ref{thm:1.5} in the case where $[\nu]$ is  the class of the dual disk to $\Sigma$ does not follow directly from our work thus far. For this case, however, we merely repeat the arguments of this section, except that we alter $\lambda_p$ and $D_p$ throughout to have one fewer meridional curve/disk. The same arguments as in Lemmas \ref{lem:4.1} and \ref{lem:4.3}  apply, except that in Lemma \ref{lem:4.3}  we get the equality $x_{p, -p} = (-1)^{p-1} x_{p, p}$. The difference in sign comes from the difference in sign for  the base case $p = 1$, which follows from the sign relation of Theorem \ref{thm:decomposition}. This proves the theorem when $[\nu]$ is the class of the dual disk. Theorem \ref{thm:1.5} in the final case, where $[\nu]$ is the sum of the classes represented by $\Sigma$ and the dual disk, then follows from the preceeding by applying the sign relation. We have therefore proven Theorem \ref{thm:1.5} in the case $W=W_p$.

Now let $(W, \nu)$ be a  general cobordism from $(Y_1, \lambda_1)$ to $(Y_2, \lambda_2)$ as in the hypothesis of Theorem \ref{thm:1.5}. At least one of $Y_1$ or $Y_2$ is a rational homology sphere; let us assume first that $Y_1$ is. There is a copy of $L(p, 1)$ in $W$, given as the boundary of  a neighborhood of the $(-p)$-framed 2-sphere $\Sigma$.  Join $Y_1$ to this copy of $L(p,1)$ by an arc in $W$. A neighborhood of these two 3-manifolds and this arc is then simply the cobordism  \[W_1:Y_1\to Y_1 \# L(p, 1)\] given by the trace of $(-p)$-surgery on an unknot in $Y_1$. This decomposes $W$ into two pieces: $W_1$ and the closure of its complement, \[W_2:Y_1 \# L(p, 1)\to Y_2.\]
Note that $W_1$ is a split cobordism formed from the identity cobordism from $Y_1$ to $Y_1$ and  the 1-vertex $W_p$ from $S^3$ to $L(p, 1)$. By \cite[Section 7.7]{Scaduto}, the cobordism maps on framed instanton homology are natural with respect to split cobordisms. Hence, the fact that the identities (1) and (2) of Theorem \ref{thm:1.5} are satisfied for $W_p$, as established above, implies that they hold for $W_1$ and thus for the cobordism $W$, via the composition law of Theorem \ref{thm:decomposition}. 

We assumed above that  $Y_1$ is a rational homology sphere. If this is not the case, then $Y_2$ is a rational homology sphere, and we consider instead the cobordism \[(W^\dagger,\nu^\dagger):(-Y_2,-\lambda_2)\to (-Y_1,-\lambda_1)\] obtained by flipping $(W,\nu)$ upside-down. We then apply the  argument above to this cobordism. The decomposition of $\Is(W,\nu)$ is insensitive to flipping upside-down, except that the individual summands all get dualized, so this proves Theorem \ref{thm:1.5} in this case as well.
\end{proof}

\begin{remark}
The reason we require in Theorem \ref{thm:1.5} the hypothesis that at least one of $Y_1$ or $Y_2$ is a rational homology sphere is because in the proof we need to consider the decomposition of the cobordism map  $W_1:Y_1 \to Y_1\#L(p,1)$, for instance, along homomorphisms from $H_2(W_1;\Z)\to \Z$. Technically, this decomposition was only proven in \cite{BSLspace} for cobordisms with $b_1=0$, which holds for $W_1$ above if and only if $Y_1$ is a rational homology sphere. In the end, however, this hypothesis in Theorem \ref{thm:1.5} is probably not  necessary  since $W_1$ is a split cobordism formed from $Y_1\times I$ with the trace of $(-p)$-surgery on the unknot in $S^3$, and this trace cobordism has $b_1=0$; see, for example, \cite[Proposition 6.6]{BSLspace}.
\end{remark}

\section{The lattice surgery exact sequence}\label{sec:5}
In this section, we establish the \textit{lattice surgery exact sequence}, a lattice homology analogue of the surgery exact triangle for $\Is$. The results and arguments in Section~\ref{sec:5.1} are nearly identical to those in  \cite[Lemma 2.9]{OSplumbed} (see also \cite{Greenesurgery}). In Section~\ref{sec:5.2}, we  show that the map $\Ts$ of Proposition \ref{prop:3.9}, from lattice homology to framed instanton homology, commutes with the maps in these two exact sequences.

\subsection{The lattice surgery exact sequence}\label{sec:5.1}
Let $\Gamma$ be a negative-definite plumbing forest, and let $v$ be a vertex in $\Gamma$ with  decoration $-p<0$. We consider two modified plumbing forests $\Gamma - v$ and $\Gamma_{+1}$. The first is formed by deleting $v$ from $\Gamma$, while the second is obtained by increasing the framing of $v$ to $-p + 1$. Clearly, $\Gamma - v$ is still negative-definite; we will assume below that $\Gamma_{+1}$ is also negative-definite. It is easy to see that the manifolds $Y_{\Gamma - v}, Y_{\Gamma}$, and $Y_{\Gamma_{+1}}$ form a surgery triad. This motivates the construction of maps
\[
\Hl(\Gamma - v) \xrightarrow{\ \ \mathbb{A}\ \ } \Hl(\Gamma) \xrightarrow{\ \ \mathbb{B}\ \ } \Hl(\Gamma_{+1}),
\]
which we now define.

To define $\mathbb{A}$, think of $W_{\Gamma - v}$ as a submanifold of $W_{\Gamma}$. For any element $k \in \Char(\Gamma - v)$, let $\mathbb{A}(k)$ be the sum over all elements $k' \in \Char(\Gamma)$ that restrict to $k$ on $W_{\Gamma-v}$. Explicitly, these are given by $k_i'$, where
\[
\ev{k_i'}{w} = 
\begin{cases}
\ev{k}{w}, & \text{for } w \neq v \\
i, & \text{for } w = v
\end{cases}
\]
for all $i \equiv p$ mod $2$. Hence in fact
\[
\mathbb{A}(k) = \sum_{i \equiv p \text{ mod } 2} 1 \otimes k_i'.
\]
Note that only finitely many $k_i'$ are non-zero in $\Hl(\Gamma)$. Indeed, for the purposes of mapping into $\Hl(\Gamma)$, we may restrict to $|i| \leq p$.

To define the map $\mathbb{B}$, let $k$ be an element in $\Char(\Gamma)$. Define two elements $k^{\pm}$ of $\Char(\Gamma_{+1})$ by
\[
\ev{k^{\pm}}{w} = 
\begin{cases}
\ev{k}{w}, & \text{for } w \neq v \\
\ev{k}{w} \pm 1, & \text{for } w = v.
\end{cases}
\]
We then set
\[
\mathbb{B}(k) = (-1/2) \otimes k^+ + (1/2) \otimes k^- \ .
\]

\begin{lemma}\label{lem:5.1}
The maps $\mathbb{A}$ and $\mathbb{B}$ descend to well-defined maps
\[
\Hl(\Gamma - v) \xrightarrow{\ \ \mathbb{A}\ \ } \Hl(\Gamma) \xrightarrow{\ \ \mathbb{B}\ \ } \Hl(\Gamma_{+1}).
\]
\end{lemma}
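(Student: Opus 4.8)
The plan is to check directly that each of $\mathbb{A}$ and $\mathbb{B}$ carries the elementary relations of Definition~\ref{def:3.1} on the source lattice homology to relations that already hold in the target, so that the maps descend to the quotients. Since $\mathbb{A}$ and $\mathbb{B}$ are defined on the free vector spaces $V_{\Gamma-v}$, $V_\Gamma$ and are $\C$-linear by construction, it suffices to test them on generators $1\otimes k$ and to verify compatibility with the Type~I and Type~II relations. The key observation throughout is that the vertex set of $\Gamma-v$ (resp. $\Gamma_{+1}$) is obtained from that of $\Gamma$ by deleting (resp. keeping) the vertex $v$, and for a vertex $w\neq v$ the value $\ev{k}{w}$ and the self-intersection $w^2$ are unchanged under $\mathbb{A}$ and $\mathbb{B}$; only the behavior at $v$ (and its neighbors, because $v^*$ has support on the neighbors of $v$) requires attention.

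First I would treat $\mathbb{A}$. Suppose $1\otimes k\sim 0$ in $\Hl(\Gamma-v)$ via a Type~I relation at some vertex $w$ of $\Gamma-v$; then $w\neq v$, and each $k_i'$ restricts to $k$ on the neighbors of $w$ with the same framing data, so the same Type~I relation kills every $1\otimes k_i'$ in $\Hl(\Gamma)$, hence $\mathbb{A}(k)=0$. For a Type~II relation at $w\neq v$ in $\Gamma-v$, one checks that $\mathbb{A}$ intertwines it with the Type~II relation at $w$ in $\Gamma$: the maps $k\mapsto k_i'$ and $(k\mp 2w^*)\mapsto (k\mp 2w^*)_i'$ agree as the duals $w^*$ are taken with respect to the respective plumbings, but $w$ is not adjacent to $v$ in $\Gamma-v$ versus $\Gamma$ in a way that would change $w^*$ — here one uses that adding the vertex $v$ back can change $w^*$ only if $w$ is adjacent to $v$, and in that case one does a short bookkeeping check that the extra contribution of $v$ in $w^*$ is matched by a reindexing of the sum over $i$. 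The summation over $i\equiv p \bmod 2$ is what makes this reindexing harmless; this is the one place that requires care.

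Next I would treat $\mathbb{B}$. For a Type~I relation at a vertex $w\neq v$ of $\Gamma$, one has $\ev{k^\pm}{w}=\ev{k}{w}$ and $w^2$ unchanged in $\Gamma_{+1}$, so the same relation kills $1\otimes k^+$ and $1\otimes k^-$ and hence $\mathbb{B}(k)=0$. For a Type~I relation at $v$, i.e. $|\ev{k}{v}|>p=-v^2$, one has $|\ev{k^\pm}{v}|\geq p+1 > p-1 = -v^2_{\Gamma_{+1}}$, so again both $1\otimes k^\pm$ vanish in $\Hl(\Gamma_{+1})$. For Type~II relations at $w\neq v$, $\mathbb{B}$ visibly intertwines the relation in $\Gamma$ with the one in $\Gamma_{+1}$ (again using that $v^*$ for $\Gamma$ and $w^*$ for $\Gamma_{+1}$ interact only through adjacency, handled by a sign/bookkeeping check). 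For a Type~II relation at $v$ in $\Gamma$, namely $\ev{k}{v}=\pm v^2=\mp p$, one relates $\mathbb{B}(1\otimes k)$ and $\mathbb{B}\big((-1)^{v^2}\otimes(k\mp 2v^*)\big)$: the two characteristic elements $k^\pm$ for $k$ and the two for $k\mp 2v^*$ line up, with one of them automatically extremal (or killed by Type~I) in $\Gamma_{+1}$, and the $\pm 1/2$ coefficients combine so that the two expressions are equal modulo the relations of $\Hl(\Gamma_{+1})$; the sign $(-1)^{v^2}$ on the source is absorbed by a Type~II relation at $v$ in $\Gamma_{+1}$ (whose self-intersection is $-p+1$). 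I expect the main obstacle to be precisely this last verification — tracking the half-integer coefficients together with the $(-1)^{v^2}$ signs through the Type~II relation at $v$ in $\Gamma_{+1}$ — together with the adjacency bookkeeping for $w^*$ when $w$ neighbors $v$; everything else is formal. I would finish by remarking that the argument is essentially identical to \cite[Lemma 2.9]{OSplumbed}, with the coefficient field changed to $\C$ and the sign conventions adapted to our edge-labeling.
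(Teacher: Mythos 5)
Your plan mirrors the paper's proof exactly: both verify, relation by relation, that $\mathbb{A}$ and $\mathbb{B}$ carry Type~I and Type~II relations in the source to relations in the target, with the two cases requiring care being (i) the reindexing of the sum over $i$ when the Type~II vertex $u$ is adjacent to $v$ (for $\mathbb{A}$), and (ii) the Type~II relation at $v$ itself (for $\mathbb{B}$), where $k^-$ and $s^+$ are killed by Type~I in $\Gamma_{+1}$ and one must track the $(-1)^{v^2}$ sign together with the $\pm\tfrac12$ coefficients through a Type~II relation in $\Gamma_{+1}$. You correctly flag these as the only nontrivial steps; the paper carries out both computations explicitly (in the second one, comparing $k^+-2(v,-)_{\Gamma_{+1}}$ with $s^-$ and matching $(-1/2)(-1)^{p-1}$ against $(-1)^p\cdot(1/2)$). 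Since your sketch identifies the same case breakdown and the same delicate points as the paper, the approach is correct and essentially identical; the remaining work is the arithmetic you deferred, which goes through as the paper shows.
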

\begin{proof}
We first show that $\mathbb{A}$ respects the relations of Definition~\ref{def:3.1}. Let $k \in \Char(\Gamma - v)$ and let $u$ be a fixed vertex in $\Gamma - v$ with decoration $-q$. It is straightforward to check that $\mathbb{A}$ preserves relations of Type I. Thus, suppose that $\ev{k}{u} = -q$. Write
\[
s = k - 2(u, -)_{\Gamma - v}.
\]
Then $\smash{k \sim (-1)^{q} \otimes s}$, so we must show that 
\[
\mathbb{A}(k) \sim (-1)^{q} \otimes \mathbb{A}(s). 
\]
Now, for each $k_i'$, we have a relation of Type II associated to $u$:
\begin{equation}\label{eq:5.1}
k_i' \sim (-1)^{q} \otimes (k_i' - 2(u, -)_\Gamma).
\end{equation}
On the other hand, by definition
\begin{equation}\label{eq:5.2}
\ev{s_i'}{w} =
\begin{cases}
\ev{s}{w}, & \text{for } w \neq v \\
i, & \text{for } w = v
\end{cases}
\ =
\begin{cases}
\ev{k}{w} - 2(u, w)_{\Gamma - v}, & \text{for } w \neq v \\
i, & \text{for } w = v.
\end{cases}
\end{equation}
If $u$ is not adjacent to $v$, then it is easily checked from \eqref{eq:5.2} that $k_i' - 2(u, -)_\Gamma$ and $s_i'$ are equal, simply by evaluating them on each vertex of $\Gamma$. However if $u$ is adjacent to $v$, then the two differ in their evaluation on $v$. In particular, in this situation we have
\[
k_i' - 2(u, -)_\Gamma = s'_{i+2}.
\]
Combining this with \eqref{eq:5.1}, in both cases we have the equality of sums
\[
\mathbb{A}(k) = \left( \sum_{i \equiv p \text{ mod } 2} k_i' \right) \sim \left( (-1)^{q} \otimes \sum_{i \equiv p \text{ mod } 2} s_i' \right) = (-1)^{q} \otimes \mathbb{A}(s),
\]
as desired. The case where $\ev{k}{u} = q$ is entirely analogous.

We now show that $\mathbb{B}$ respects the relations of Definition~\ref{def:3.1}. Relations of Type I are again straightforward. For a relation of Type II, suppose that $\ev{k}{u} = -q$ and write 
\[
s = k - 2(u, -)_{\Gamma}.
\]
Then $\smash{k \sim (-1)^{q} \otimes s}$, so we must show that
\[
\mathbb{B}(k) \sim (-1)^{q} \otimes \mathbb{B}(s). 
\]
We subdivide into several cases. First suppose that $u \neq v$. Then $\ev{k^\pm}{u} = \ev{k}{u}$ and the decoration of $u$ in $\Gamma_{+1}$ is still $-q$, so we have relations
\[
k^{\pm} \sim (-1)^q (k^\pm - 2(u, -)_{\Gamma_{+1}}).
\]
Now, by definition
\begin{equation}\label{eq:5.3}
\ev{s^{\pm}}{w}
=
\begin{cases}
\ev{s}{w}, & \text{for } w \neq v \\
\ev{s}{w} \pm 1 & \text{for } w = v
\end{cases}
\ =
\begin{cases}
\ev{k}{w} - 2(u, w)_{\Gamma}, & \text{for } w \neq v \\
\ev{k}{w} - 2(u, w)_{\Gamma} \pm 1 & \text{for } w = v.
\end{cases}
\end{equation}
As long as $u \neq v$, it is clear that $\smash{(u, -)_{\Gamma_{+1}} = (u, -)_{\Gamma}}$. In this case, the characteristic elements $k^\pm - 2(u, -)_{\Gamma_{+1}}$ and $s^{\pm}$ are equal. Hence $k^{\pm} \sim (-1)^q \otimes s^{\pm}$, which gives the desired claim. 

Now suppose $u = v$, so that $p=q$ and $\ev{k}{v} = -p$. Then $\ev{k^{\pm}}{v} = -p \pm 1$ and the decoration of $v$ in $\Gamma_{+1}$ is $-p + 1$. Hence $k^- \sim 0$ by a relation of Type I. As $k^+$ is extremal, we have
\begin{equation}\label{eq:5.4}
k^+ \sim (-1)^{p-1} \otimes (k^+ - 2(v, -)_{\Gamma_{+1}}).
\end{equation}
By similar reasoning, $s^+ \sim 0$. We claim that the characteristic elements $\smash{k^+ - 2(v, -)_{\Gamma_{+1}}}$ and $s^-$ are equal. Indeed, an examination of \eqref{eq:5.3} easily shows that they agree on all vertices $w \neq v$, while we check
\[
\ev{k^+}{v} - 2(v, v)_{\Gamma_{+1}} = \ev{k}{v} + 1 - 2(-p + 1) = p - 1
\]
and
\[
\ev{s^-}{v} = \ev{s}{v} - 1 = \ev{k}{v} - 2(v,v)_{\Gamma} - 1 = p - 1.
\]
Combining this with \eqref{eq:5.4}, we thus have $k^+ \sim (-1)^{p-1} \otimes s^-$. Hence
\[
\mathbb{B}(k) = (-1/2) \otimes k^+ + (1/2) \otimes k^- \sim (-1/2)(-1)^{p-1} \otimes s^-.
\]
Since
\[
\mathbb{B}(s) = (-1/2) \otimes s^+ + (1/2) \otimes s^- \sim (1/2) \otimes s^-
\]
this establishes the desired relation between $\mathbb{B}(k)$ and $\mathbb{B}(s)$. The case where $\ev{k}{u} = q$ is entirely analogous.
\end{proof}

\begin{lemma}\label{lem:5.2}
The sequence
\[
\Hl(\Gamma - v) \xrightarrow{\ \ \mathbb{A}\ \ } \Hl(\Gamma) \xrightarrow{\ \ \mathbb{B}\ \ } \Hl(\Gamma_{+1}) \longrightarrow 0
\]
is exact.
\end{lemma}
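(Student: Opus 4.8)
The plan is to prove exactness at $\Hl(\Gamma)$ and surjectivity onto $\Hl(\Gamma_{+1})$ by a direct computation with the explicit generators, following the model of \cite[Lemma~2.9]{OSplumbed}. Fix the vertex $v$ with framing $-p$, and recall that every nonzero class in lattice homology is represented by (a combination of) characteristic vectors $k$ with $v^2 \le \ev{k}{w} \le -v^2$ on every vertex $w$. I would organize the characteristic vectors on $\Gamma$ according to their value $i = \ev{k}{v}$, and likewise for $\Gamma_{+1}$, where the framing of $v$ has become $-p+1$.

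\emph{Surjectivity of $\mathbb{B}$.} First I would show every generator $1 \otimes k''$ of $\Hl(\Gamma_{+1})$ with $|\ev{k''}{v}| \le p-1$ is hit. Given such a $k''$, pick the characteristic vector $k$ on $\Gamma$ agreeing with $k''$ off $v$ and with $\ev{k}{v} = \ev{k''}{v} + 1$ (so $k^- = k''$); then $\mathbb{B}(k) = -\tfrac12 \otimes k^+ + \tfrac12 \otimes k''$. If $\ev{k}{v} = \ev{k''}{v}+1 \le p$ then $k^+$ also has $|\ev{k^+}{v}| \le p$ and one continues inductively, peeling off one generator at a time and moving the ``error term'' $k^+$ toward the boundary value $\ev{\cdot}{v} = p$, where a Type~I relation kills it (since $|\ev{k''}{v}+2| > p-1$ forces the overshoot, and in $\Hl(\Gamma)$ the generator with $v$-value $p+1$ vanishes while the one with value $p$ survives only up to the single relation). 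A finite telescoping sum of the $\mathbb{B}(k_i)$ then recovers $1\otimes k''$ up to the Type~I/II relations; this is exactly the computation in \cite[Lemma~2.9]{OSplumbed}, adapted to our sign conventions.

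\emph{Exactness at $\Hl(\Gamma)$.} The inclusion $\operatorname{im}\mathbb{A} \subseteq \ker\mathbb{B}$ is a direct check: for $k \in \Char(\Gamma-v)$, the two vectors $(k_i')^{+}$ and $(k_{i+2}')^{-}$ on $\Gamma_{+1}$ coincide (both agree with $k$ off $v$ and evaluate to $i+1$ on $v$), so in the sum $\mathbb{B}(\mathbb{A}(k)) = \sum_i\big({-}\tfrac12\otimes (k_i')^+ + \tfrac12\otimes(k_i')^-\big)$ consecutive terms cancel in pairs and the total telescopes to $0$. For the reverse inclusion $\ker\mathbb{B}\subseteq\operatorname{im}\mathbb{A}$, I would take a class $\xi = \sum_j c_j \otimes k_j \in \Hl(\Gamma)$ with $\mathbb{B}(\xi)=0$, use the relations to normalize each $k_j$ into the ``box'' $v^2 \le \ev{k_j}{w}\le -v^2$, group the $k_j$ by their common restriction to $\Gamma - v$, and within each such group argue that the $\mathbb{B}$-images being $0$ forces the coefficients of the $v$-value slices to match the pattern $(\dots, c, c, c, \dots)$ (constant across the admissible range $|i|\le p$), which is precisely the shape of $\mathbb{A}$ applied to the corresponding class in $\Hl(\Gamma-v)$. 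One must be careful at the two ends $i = \pm p$, where Type~I/II relations reduce the number of independent slices by one; here the single relation $k_{-p}' \sim (-1)^p k_p'$ from Example~\ref{ex:3.3}/Lemma~\ref{lem:4.3} is exactly what makes the endpoint bookkeeping consistent.

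\emph{Main obstacle.} The routine part is the cancellation showing $\mathbb{B}\circ\mathbb{A}=0$ and surjectivity of $\mathbb{B}$; these are telescoping identities. The genuinely delicate step is $\ker\mathbb{B}\subseteq\operatorname{im}\mathbb{A}$: one has to choose canonical representatives for classes in $\Hl(\Gamma)$, understand how the Type~II relations on vertices \emph{adjacent} to $v$ shuffle the $v$-value index (this is the $k_i' - 2(u,-)_\Gamma = s_{i+2}'$ phenomenon already met in the proof of Lemma~\ref{lem:5.1}), and track the boundary relations at $i = \pm p$. Concretely, I expect the cleanest route is to filter both $\Hl(\Gamma)$ and $\Hl(\Gamma-v)$ by the restriction to $\Gamma - v$, reduce to the one-vertex situation of Example~\ref{ex:3.3} fiberwise, and invoke the explicit basis there. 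This localizes all the sign and endpoint subtleties to the single-vertex computation, which has already been done.
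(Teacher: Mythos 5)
Your treatment of surjectivity and of the inclusion $\operatorname{im}\mathbb{A}\subseteq\ker\mathbb{B}$ matches the paper: both are telescoping identities (the paper hits $k\in\Char(\Gamma_{+1})$ with $2\otimes\sum_{i=\ev{k}{v}+1}^{p}k_i'$ and observes that $\mathbb{B}\circ\mathbb{A}$ telescopes to zero). The divergence is in the reverse inclusion $\ker\mathbb{B}\subseteq\operatorname{im}\mathbb{A}$, and there your plan has a genuine gap.

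You propose to ``filter $\Hl(\Gamma)$ by the restriction to $\Gamma-v$, reduce to the one-vertex situation fiberwise,'' and read off from $\mathbb{B}(\xi)=0$ that the coefficients along each $v$-slice are constant. The trouble is that no such filtration exists on the quotient: a Type~II relation at a vertex $u$ adjacent to $v$ replaces $k$ by $k-2u^*$, and since $(u,v)_\Gamma=-1$ this shifts the $v$-value by $2$ while also changing the restriction to $\Gamma-v$ (it alters the value at $u$ itself and at any neighbors of $u$). So neither ``restriction to $\Gamma-v$'' nor ``value at $v$'' descends to a grading or filtration of $\Hl(\Gamma)$ or $\Hl(\Gamma_{+1})$. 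Consequently, $\mathbb{B}(\xi)=0$ in $\Hl(\Gamma_{+1})$ is a statement that $\mathbb{B}(\xi)$ equals a linear combination of elementary relations, and those relations mix fibers; you cannot decompose the condition slice by slice and reduce to Example~\ref{ex:3.3}. This is exactly where an attempted direct argument stalls.

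The paper sidesteps this by never trying to analyze $\ker\mathbb{B}$ directly. Instead it defines a candidate inverse
\[
\mathbb{S}:\Hl(\Gamma_{+1})\longrightarrow\Hl(\Gamma)/\operatorname{im}\mathbb{A},\qquad
\mathbb{S}(k)=2\otimes\sum_{\substack{i=\ev{k}{v}+1\\ i\equiv p\bmod 2}}^{p}k_i',
\]
and checks that this \emph{formula} respects the Type~I and Type~II relations in $\Hl(\Gamma_{+1})$ one at a time. That verification is tractable precisely because each elementary relation moves the sum by a bounded, explicit amount: Type~II at a neighbor $u\neq v$ sends each $k_i'$ to $(-1)^q s_{i+2}'$ and the index shift is absorbed by the Type~I vanishing of $s_{p+2}'$; Type~I at $v$ with $\ev{k}{v}<-(p-1)$ produces $\sum_{i=-p}^{p}k_i'$, which is $\mathbb{A}(k|_{\Gamma-v})$ and hence dies in the quotient---this is the one place where passing to $\Hl(\Gamma)/\operatorname{im}\mathbb{A}$ is essential. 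Once $\mathbb{S}$ is well-defined, the telescoping computation shows $\mathbb{S}\circ\mathbb{B}=\mathrm{id}$ and $\mathbb{B}\circ\mathbb{S}=\mathrm{id}$, so the induced map $\Hl(\Gamma)/\operatorname{im}\mathbb{A}\to\Hl(\Gamma_{+1})$ is an isomorphism and exactness follows for free. If you want to salvage your plan, replacing the fiberwise reduction by this inverse-map construction is the clean route; the endpoint bookkeeping at $i=\pm p$ you flagged is exactly what the well-definedness check of $\mathbb{S}$ packages up.
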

\begin{proof}
We begin by showing that $\mathbb{B}$ is surjective. Let $k$ be an element of $\Char(\Gamma_{+1})$. Let $k'_i$ be the element of $\Char(\Gamma)$ defined by
\[
\ev{k'_i}{w} = 
\begin{cases}
\ev{k}{w} & \text{for } w \neq v \\
i & \text{for } w = v
\end{cases}
\]
and consider the element of $\Hl(\Gamma)$ given by the class of
\[
2 \otimes \left (\sum^p_{\substack{i = \ev{k}{v} + 1 \\ i \equiv p \text{ mod }2}} k'_i \right)
\]
where the sum is taken over all $i$ from $\ev{k}{v} + 1$ to $p$ with $i \equiv p \text{ mod }2$. 
It is easily checked that $\mathbb{B}$ maps this element to the class of $k$, so $\mathbb{B}$ is surjective.

We now turn to exactness at $\Hl(\Gamma)$. The composition $\mathbb{B} \circ \mathbb{A}$ forms a telescoping sum which is easily seen to be zero. Hence $\im \mathbb{A} \subset \ker \mathbb{B}$, and in particular $\mathbb{B}$ descends to a map from the quotient $\Hl(\Gamma)/\im \mathbb{A}$ to $\Hl(\Gamma_{+1})$. We construct an inverse
\[
\mathbb{S} : \Hl(\Gamma_{+1}) \rightarrow \Hl(\Gamma)/\im \mathbb{A}
\] 
as follows. Let $k$ be an element of $\Char(\Gamma_{+1})$. As above, define
\begin{equation}\label{eq:5.5}
\mathbb{S}(k) = 2 \otimes \left( \sum^p_{\substack{i = \ev{k}{v} + 1 \\ i \equiv p \text{ mod }2}} k'_i \right)
\end{equation}
where the above sum is viewed as an element of $\Hl(\Gamma)/\im \mathbb{A}$. To prove $\mathbb{S}$ is a well-defined map, we must show that $\mathbb{S}$ respects relations in $\Hl(\Gamma_{+1})$. Let $u$ thus be a vertex in $\Gamma_{+1}$. Suppose that $k \sim 0$ via a relation of Type I associated to $u$. If $u \neq v$, then clearly each $k'_i \sim 0$ in $\Hl(\Gamma)$ and so $\mathbb{S}(k) = 0$. If $u = v$, however, the situation is slightly more subtle. If $\ev{k}{v} > p - 1$, then again each $k_i'$ appearing in \eqref{eq:5.5} is zero in $\Hl(\Gamma)$. If $\ev{k}{v} < - (p - 1)$, however, then \eqref{eq:5.5} is \textit{not} zero in $\Hl(\Gamma)$, and is instead given by the class of
\begin{equation}\label{eq:5.6}
2 \otimes \left( \sum^p_{\substack{i = -p \\ i \equiv p \text{ mod }2}} k'_i \right).
\end{equation}
Thus, $\mathbb{S}$ does \textit{not} give a well-defined map from $\Hl(\Gamma_{+1})$ to $\Hl(\Gamma)$. Nevertheless, since \eqref{eq:5.6} is in the image of $\mathbb{A}$, we have that $\mathbb{S}(k) = 0$ in the quotient $\Hl(\Gamma) / \im \mathbb{A}$.

Similarly, it is easy to check that $\mathbb{S}$ respects relations of Type II in the case that the associated vertex $u$ is not $v$. Thus, suppose $\ev{k}{v} = -p + 1$. Write
\[
s = k - 2(v, -)_{\Gamma_{+1}},
\]
so that $k \sim (-1)^{p-1} \otimes s$. Now, using the fact that we are working in the quotient $\Hl(\Gamma)/\im \mathbb{A}$, we have
\begin{equation}\label{eq:4.7}
\mathbb{S}(k) = 2 \otimes \left(\sum^p_{\substack{i = -p + 2 \\ i \equiv p \text{ mod }2}} k'_i \right) = 2 \otimes \left(\sum^p_{\substack{i = -p + 2 \\ i \equiv p \text{ mod }2}} k'_i\right) + (-2) \otimes \left ( \sum^p_{\substack{i = -p \\ i \equiv p \text{ mod }2}} k'_i \right ) = (-2) \otimes k'_{-p}.
\end{equation}
Moreover, noting that $\ev{s}{v} = p -1$, we see that
\begin{equation}\label{eq:4.8}
\mathbb{S}(s) = 2 \otimes s_p'. 
\end{equation}
It is straightforward to verify that $k'_{-p} \sim (-1)^p \otimes s'_p$ via a relation of Type II associated to $v$. Combining this with \eqref{eq:4.7} and \eqref{eq:4.8} gives the desired relation between $\mathbb{S}(k)$ and $\mathbb{S}(s)$. The case where $\ev{k}{v} = p - 1$ is entirely analogous. 

Finally, it is straightforward to check that $\mathbb{S}$ is an inverse to $\mathbb{B}$.
This shows that $\mathbb{B}$ and $\mathbb{S}$ are isomorphisms between $\Hl(\Gamma) / \im \mathbb{A}$ and $\Hl(\Gamma_{+1})$. Hence the lattice surgery sequence is exact at $\Hl(\Gamma)$, completing the proof.
\end{proof}


\subsection{Comparison with the surgery exact sequence for $\Is$}\label{sec:5.2}
Here, we show that the maps $\Ts$ of Proposition \ref{prop:3.9} commute with the maps in the lattice and instanton surgery exact sequences, an essential ingredient in the proof of Theorem \ref{thm:1.1}. To prove this, we must take care with the multicurve decorations we are using in the latter. As in the previous subsection, let $v$ be a vertex in $\Gamma$ with decoration $-p<0$, and consider the surgery triad $Y_{\Gamma - v},Y_{\Gamma},Y_{\Gamma_{+1}}$. Let us equip $Y_{\Gamma - v}$ with the multicurve 
\[
\lambda_{\Gamma - v}' = \lambda_{\Gamma - v} \cup (p-1) \mu_v
\]
displayed on the left in Figure~\ref{fig:5.1}. This is formed by taking the standard multicurve $\lambda_{\Gamma-v}$, together with $p-1$ unknots represented by meridians of the knot $v$ in $Y_{\Gamma - v}$. (Note that in $Y_{\Gamma - v}$, we have not yet done surgery on $v$.) Following the prescription of Section~\ref{sec:2.3}, the other decorations in the surgery sequence are then easily seen to be the standard multicurve decorations $\lambda_{\Gamma}$ and $\lambda_{\Gamma_{+1}}$. As in Section~\ref{sec:2.3}, we denote the 2-handle attachment cobordisms and their surface decorations by $(W_a, \Sigma_a)$ and $(W_b, \Sigma_b)$.

\begin{figure}[t]
\center
\includegraphics[scale=0.8]{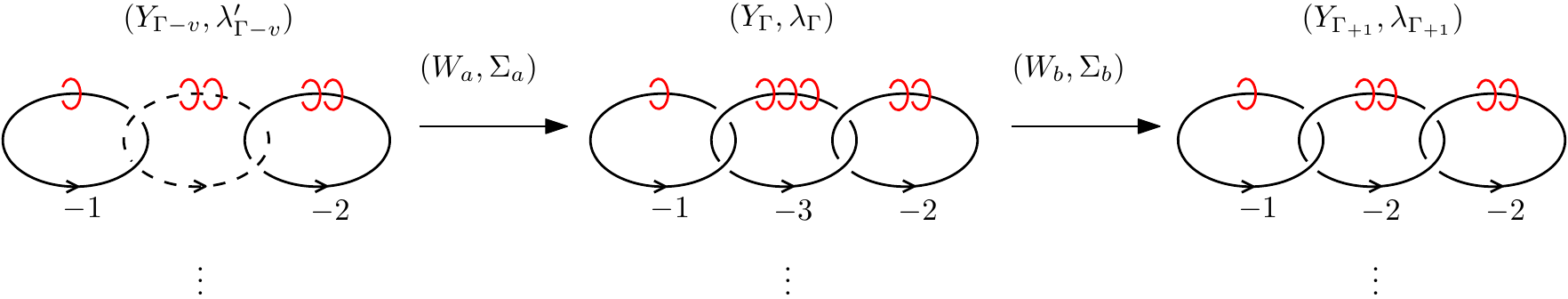}
\caption{Decorations in the surgery sequence.}\label{fig:5.1}
\end{figure}

\begin{lemma}\label{lem:5.3}
The diagram 
\[
\begin{tikzpicture}[scale=1]
\node (A) at (0,1.75) {$\Is(Y_{\Gamma-v}, \lambda_{\Gamma-v})$};
\node (B) at (1.6, 1.75) {$\cong$};
\node (C) at (3.2,1.75) {$\Is(Y_{\Gamma-v}, \lambda'_{\Gamma-v})$};
\node (D) at (7.4,1.75) {$\Is(Y_{\Gamma}, \lambda_{\Gamma})$};
\node (E) at (11.6,1.75) {$\Is(Y_{\Gamma_{+1}}, \lambda_{\Gamma_{+1}})$};
\node (F) at (0,0) {$\Hl(\Gamma-v)$};
\node (G) at (7.4,0) {$\Hl(\Gamma)$};
\node (H) at (11.6,0) {$\Hl(\Gamma_{+1})$};
\path[->,font=\scriptsize,>=angle 90]
(F) edge node[left]{$\Ts$} (A)
(G) edge node[left]{$\Ts$} (D)
(H) edge node[left]{$\Ts$} (E)
(C) edge node[above]{$\Is(W_a, \Sigma_a)$} (D)
(D) edge node[above]{$\Is(W_b, \Sigma_b)$} (E)
(F) edge node[above]{$\mathbb{A}$} (G)
(G) edge node[above]{$\mathbb{B}$} (H);
\end{tikzpicture}
\]
commutes. Here the isomorphism $\Is(Y_{\Gamma-v}, \lambda_{\Gamma - v}) \cong \Is(Y_{\Gamma-v}, \lambda_{\Gamma - v}')$ is given by the tautological cobordism map of Remark~\ref{rem:tautological}. 
\end{lemma}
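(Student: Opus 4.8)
The plan is to check the two squares of the diagram independently, in each case reducing to the composition law of Theorem~\ref{thm:decomposition} after identifying the surface decorations on the relevant composite cobordisms, and then recognizing the resulting sums of cobordism maps as $\Ts\circ\mathbb{A}$ and $\Ts\circ\mathbb{B}$.

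For the left square, first note that postcomposing $\Is(W_{\Gamma-v},D_{\Gamma-v};k)$ with the tautological cobordism map of Remark~\ref{rem:tautological} replaces the decoration $D_{\Gamma-v}$ by the decoration $D_{\Gamma-v}'$ obtained by adjoining $p-1$ meridional disks of $v$, pushed into the interior of $W_{\Gamma-v}$; so the left-hand route of the square carries $1\otimes k$ to $\Is(W_a,\Sigma_a)\bigl(\Is(W_{\Gamma-v},D_{\Gamma-v}';k)(x_0)\bigr)$. Gluing $(W_a,\Sigma_a)$ onto $(W_{\Gamma-v},D_{\Gamma-v}')$ recovers $(W_\Gamma,D_\Gamma)$ up to a homology of the decoration rel boundary, and since $H_2(W_\Gamma;\Z)\cong H_2(W_{\Gamma-v};\Z)\oplus\Z\langle v\rangle$, expanding $\Is(W_a,\Sigma_a)=\sum_{k'}\Is(W_a,\Sigma_a;k')$ and applying the composition law gives
\[
\Is(W_a,\Sigma_a)\circ\Is(W_{\Gamma-v},D_{\Gamma-v}';k)=\sum_{\substack{s\in\Char(\Gamma)\\ s|_{W_{\Gamma-v}}=k}}\Is(W_\Gamma,D_\Gamma;s)=\sum_{i\equiv p\ (2)}\Is(W_\Gamma,D_\Gamma;k_i'),
\]
which upon evaluation on $x_0$ is precisely $\Ts(\mathbb{A}(1\otimes k))$. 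Only finitely many terms survive---so that both the sum of maps and the class $\mathbb{A}(1\otimes k)\in\Hl(\Gamma)$ make sense---by the finiteness clause of Theorem~\ref{thm:decomposition}, or equivalently by Lemma~\ref{lem:4.1} applied to the copy of $W_p$ sitting inside $W_\Gamma$, exactly as in the proof of Proposition~\ref{prop:3.9}.

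For the right square, the key geometric input is that the composite $W_\Gamma\cup W_b:S^3\to Y_{\Gamma_{+1}}$ is diffeomorphic to the blow-up $W_{\Gamma_{+1}}\#\overline{\CPr}^2$, with $D_\Gamma\cup\Sigma_b$ homologous rel boundary to $D_{\Gamma_{+1}}$ together with a copy $E$ of the exceptional divisor. This is the global form of the local picture analyzed in point~(3) at the start of Section~\ref{sec:4}, with $v'=v-x$ playing the role of the new vertex of $\Gamma_{+1}$ and $x=E$ the exceptional sphere. By the composition law,
\[
\Is(W_b,\Sigma_b)\circ\Is(W_\Gamma,D_\Gamma;k)=\sum_{\substack{s\\ s|_{W_\Gamma}=k}}\Is\bigl(W_{\Gamma_{+1}}\#\overline{\CPr}^2,\,D_\Gamma\cup\Sigma_b;\,s\bigr),
\]
and the characteristic elements $s$ restricting to $k$ on $W_\Gamma$ are parametrized by their (necessarily odd) value $\ell$ on $x=E$, the restriction $s|_{W_{\Gamma_{+1}}}$ then being the characteristic element of $\Gamma_{+1}$ that evaluates to $\ev{k}{v}-\ell$ on $v'$. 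Using the sign relation of Theorem~\ref{thm:decomposition} to pass from $D_\Gamma\cup\Sigma_b$ to $D_{\Gamma_{+1}}$ (the contribution of $E$ produces a sign equal to $\ell=\pm1$), and then the blow-up formula (which kills every term with $\ell\neq\pm1$ and contributes a factor $\tfrac12$), precisely as in the proof of Lemma~\ref{lem:4.3}, collapses this sum to
\[
\tfrac12\,\Is(W_{\Gamma_{+1}},D_{\Gamma_{+1}};k^-)-\tfrac12\,\Is(W_{\Gamma_{+1}},D_{\Gamma_{+1}};k^+)
\]
with $k^\pm$ as in the definition of $\mathbb{B}$. Evaluating on $x_0$ gives $\Ts\bigl(\tfrac12\otimes k^--\tfrac12\otimes k^+\bigr)=\Ts(\mathbb{B}(1\otimes k))$, completing the check.

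The main obstacle is bookkeeping rather than anything conceptual: verifying that $D_{\Gamma-v}'\cup\Sigma_a$ and $D_\Gamma$ agree in $H_2(W_\Gamma,\partial W_\Gamma;\Z)$---not merely mod $2$---so that no stray sign intervenes (per Remark~\ref{rmk:dependence}), and likewise that $D_\Gamma\cup\Sigma_b\simeq D_{\Gamma_{+1}}\cup E$; and keeping track of orientations so that the coefficients $\pm\tfrac12$ of $\mathbb{B}$ emerge with the correct signs---in particular confirming that the exceptional sphere $x$ is the one arising from the negatively-oriented meridian $J$ of Section~\ref{sec:2.3} and that $v=v'+x$ in $H_2(W_\Gamma\cup W_b)$. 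All of this is localized near $v$ and is essentially the content of point~(3) of Section~\ref{sec:4} together with Figure~\ref{fig:4.1}, grafted onto $W_{\Gamma-v}$, which contributes nothing new since $W_b$ is supported away from the rest of the plumbing.
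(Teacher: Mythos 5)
Your proposal is correct and takes essentially the same approach as the paper: for each square, identify the composite cobordism (with its composite decoration) as $W_\Gamma$ with $D_\Gamma$ in the left square, and as the blow-up $W_{\Gamma_{+1}}\#\overline{\CPr}^2$ with $D_{\Gamma_{+1}}\cup E$ in the right square, then apply the composition law of Theorem~\ref{thm:decomposition}, and, for the right square, the sign relation and blow-up formula, matching the sums term-by-term with the definitions of $\mathbb{A}$ and $\mathbb{B}$. Your sign bookkeeping agrees with the paper's computation, which yields $(1/2)\,\Is(W_{\Gamma_{+1}},D_{\Gamma_{+1}};k^-) - (1/2)\,\Is(W_{\Gamma_{+1}},D_{\Gamma_{+1}};k^+) = \Ts(\mathbb{B}(k))$.
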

\begin{proof}
We begin with the left-hand commutative square. Let $k \in \Char(\Gamma - v)$. Going up-and-then-right, this is sent to the composition of three cobordism maps applied to the generator $x_0 \in \Is(S^3, \emptyset)$ of \eqref{eq:gen}. Topologically, this composition is just
\[
W_{\Gamma - v} \cup (Y_{\Gamma - v} \times I) \cup W_a \cong W_{\Gamma}.
\]
Note that the union of the corresponding surface decorations is diffeomorphic to $D_{\Gamma}$. Hence by the composition law, going up-and-then-right in the square yields
\[
\sum_{\substack{k' \in \Char(W_{\Gamma})\\ k'|_{W_{\Gamma-v}} = k}} \Is(W_\Gamma, D_\Gamma;k')(x_0).
\]
Examining the definition of $\mathbb{A}$, this is precisely $\Ts(\mathbb{A}(k))$.

We now turn to the right-hand commutative square. Let $k \in \Char(\Gamma)$. Going up-and-then-right, this is sent to the composition of two cobordism maps applied to $x_0 \in \Is(S^3, \emptyset)$. Topologically, this composition is diffeomorphic to the blow-up
\begin{equation}\label{eq:diffeo}
W_{\Gamma} \cup W_b \cong W_{\Gamma_{+1}} \# \overline{\CPr}^2.
\end{equation}
The surface $D_\Gamma \cup \Sigma_b$ is just the standard disk system $D_{\Gamma_{+1}}$ coming from $W_{\Gamma_{+1}}$, union a single copy of the exceptional divisor $E$. Hence
\begin{align*}
\Is(W_b, \Sigma_b) \circ \Is(W_\Gamma, D_\Gamma; k) &= \sum_{\substack{k' \in \Char(W_{\Gamma} \cup W_b)\\ k'|_{W_\Gamma} = k}} \Is(W_{\Gamma_{+1}} \# \overline{\CPr}^2, D_{\Gamma_{+1}} \cup E; k') \\
&= \sum_{\substack{k' \in \Char(W_{\Gamma} \cup W_b)\\ k'|_{W_\Gamma} = k}} (-1)^{(1/2)(\ev{k'}{E} - 1)} \Is(W_{\Gamma_{+1}} \# \overline{\CPr}^2, D_{\Gamma_{+1}}; k').
\end{align*}
By the blow-up formula, there are only two terms in this sum which are non-zero, which correspond to the characteristic elements $k'$ with $\ev{k'}{E} = \pm 1$. The diffeomorphism $(\ref{eq:diffeo})$ is obtained by applying a single handleslide corresponding to the basis change $v \mapsto v - x$ (where $x = E$ is the exceptional divisor), as in Figure~\ref{fig:4.1}. The two characteristic elements $k'$ evaluate to $k(v) -1$ and $k(v) + 1$ (respectively) on this new basis element, and coincide with $k$ on all other basis elements. Hence they act as $k^{\mp}$ (respectively) on $W_{\Gamma_{+1}}$. Applying the blow-up formula, the above is thus equal to
\[
(-1/2) \cdot \Is(W_{\Gamma_{+1}}, D_{\Gamma_{+1}}; k^+) + (1/2) \cdot \Is(W_{\Gamma_{+1}}, D_{\Gamma_{+1}}; k^-).
\]
Examining the definition of $\mathbb{B}$, this is precisely $\Ts(\mathbb{B}(k))$.
\end{proof}


\section{The isomorphism theorem for plumbings with $\leq 1$ bad vertex} \label{sec:6}

Theorem~\ref{thm:1.1} asserts that the map in Proposition \ref{prop:3.9}, \[\Ts:\Hl(\Gamma)\to\Iseven(Y_\Gamma,\lambda_\Gamma)\cong \Iseven(Y_\Gamma), \] is an isomorphism for every  almost-rational plumbing. In this section, we  prove this  for negative-definite plumbings with at most one bad vertex. We will complete the proof of this isomorphism theorem---for \emph{general} almost-rational plumbings---in the next section.

Let us first consider  0-bad-vertex plumbings. Recall that these are the plumbing forests $\Gamma$ for which
\begin{equation}\label{eq:6.1}
-m(v) \geq d(v)
\end{equation}
for all vertices $v\in \Gamma$. We begin with the convenient lemma that  0-bad-vertex plumbings are \emph{automatically}  negative-semidefinite:

\begin{lemma}\label{lem:6.1}
Let $\Gamma$ be a 0-bad vertex plumbing. Then $\Gamma$ is negative-semidefinite. In fact, either:
\begin{enumerate}
\item $\Gamma$ is negative-definite; or, 
\item There is a connected component $\Gamma_0$ of $\Gamma$ such that $-m(v) = d(v)$ for all vertices $v \in \Gamma_0$. In this situation, $\Gamma_0$ necessarily represents a plumbing diagram for $S^1 \times S^2$.
\end{enumerate}
\end{lemma}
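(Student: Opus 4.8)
The statement is a classical fact about negative-semidefinite plumbing trees; the plan is to work directly with the quadratic form $q(x) = (x,x)_\Gamma$ on $H_2(W_\Gamma) \otimes \R$ and exploit the tree structure. First I would reduce to the case where $\Gamma$ is connected, since the intersection form is block-diagonal over connected components and a direct sum is negative-(semi)definite iff each summand is. So assume $\Gamma$ is a connected tree with vertices $v_1,\dots,v_n$ satisfying $-m(v_i) \ge d(v_i)$ for all $i$. Writing an arbitrary vector as $x = \sum_i a_i v_i$, I would expand
\[
-(x,x)_\Gamma = \sum_i (-m(v_i)) a_i^2 + 2\sum_{\{i,j\}\in E(\Gamma)} a_i a_j \ \ge\ \sum_i d(v_i) a_i^2 + 2\sum_{\{i,j\}\in E(\Gamma)} a_i a_j,
\]
using the hypothesis $-m(v_i)\ge d(v_i)$ and noting the edge sign is $-1$ in our convention so that $(v_i,v_j)_\Gamma = -1$ contributes $-2a_ia_j$ to $(x,x)_\Gamma$, hence $+2a_ia_j$ to $-(x,x)_\Gamma$. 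The key observation is that $\sum_i d(v_i) a_i^2 = \sum_{\{i,j\}\in E(\Gamma)}(a_i^2 + a_j^2)$, since each vertex $v_i$ appears in exactly $d(v_i)$ edges. Therefore
\[
-(x,x)_\Gamma \ \ge\ \sum_{\{i,j\}\in E(\Gamma)} (a_i^2 + 2a_ia_j + a_j^2) \ =\ \sum_{\{i,j\}\in E(\Gamma)} (a_i + a_j)^2 \ \ge\ 0,
\]
which immediately gives negative-semidefiniteness (part of the "in fact" claim follows too).

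**Analyzing the equality case.** For the dichotomy, suppose $\Gamma$ is connected and \emph{not} negative-definite; then there is $x\neq 0$ with $(x,x)_\Gamma = 0$. Tracing through the inequality chain, equality forces two things: (a) $a_i + a_j = 0$ for every edge $\{i,j\}$, and (b) $-m(v_i) = d(v_i)$ for every vertex $v_i$ that actually occurs with nonzero coefficient — more carefully, the first inequality is $\sum_i(-m(v_i) - d(v_i))a_i^2 \ge 0$, so equality requires $(-m(v_i)-d(v_i))a_i^2 = 0$ for all $i$, i.e. $-m(v_i) = d(v_i)$ whenever $a_i \neq 0$. From (a), since $\Gamma$ is a connected tree, the values $a_i$ are determined up to sign by a single choice: walking along edges flips the sign, so $a_i = \pm c$ for a fixed constant $c$ and all $i$ (a tree is bipartite, and $a$ is constant up to sign on the two color classes). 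Since $x\neq 0$ we have $c \neq 0$, hence \emph{every} $a_i \neq 0$, and so by (b) we get $-m(v_i) = d(v_i)$ for \emph{all} vertices of $\Gamma$ — this is exactly the claimed component $\Gamma_0 = \Gamma$. Conversely, if $-m(v_i) = d(v_i)$ holds for every vertex, the vector $x$ with $a_i = (-1)^{\mathrm{dist}(v_1, v_i)}$ (well-defined because trees are bipartite) lies in the kernel, so $\Gamma$ is not negative-definite.

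**Identifying $S^1\times S^2$.** Finally, when $-m(v_i) = d(v_i)$ for all vertices of a connected tree, I need to identify $Y_\Gamma \cong S^1\times S^2$. The cleanest route is to show such a $\Gamma$ can be reduced, via blow-downs of $(-1)$-framed leaves (Neumann moves / the move discussed in Appendix~\ref{sec:appendix}) and their inverses, to the single-vertex graph with framing $0$, which plainly gives $0$-surgery on the unknot, i.e. $S^1\times S^2$. Leaves of a tree have degree $1$, so the hypothesis forces every leaf to have framing $-1$; blowing down a $(-1)$-leaf $v$ attached to $u$ decreases $d(u)$ by $1$ and increases $m(u)$ by $1$, i.e. it preserves the relation $-m = d$ at $u$ (and trivially at all other vertices), so the property is stable under this reduction. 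Iterating on the finite tree, one either terminates at the single vertex with framing $0$, or at a graph with $\ge 2$ vertices all of degree... but any finite tree with $\ge 2$ vertices has a leaf, which must be a $(-1)$-leaf, so we can always continue until exactly one vertex remains, necessarily with framing $-d = 0$. Alternatively, and perhaps more simply for the write-up, I would note that the linking matrix $A_\Gamma$ of such a $\Gamma$ has the all-$\pm1$ (bipartite sign) vector in its kernel and one checks directly (e.g. via the reduction above, or by computing that $A_\Gamma$ has rank $n-1$ with the obvious kernel) that $H_1(Y_\Gamma) = \mathrm{coker}(A_\Gamma) \cong \Z$; combined with the well-known fact (Neumann's plumbing calculus) that a plumbed manifold with first Betti number $1$ arising from such a "zero-sum" tree is $S^1\times S^2$, this finishes it.

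**Main obstacle.** The arithmetic inequalities are routine; the genuine content is the last step — rigorously identifying the boundary $3$-manifold as $S^1\times S^2$ rather than merely computing $H_1(Y_{\Gamma_0}) \cong \Z$. I expect to handle this via the blow-down reduction to the $0$-framed unknot, appealing to Neumann's plumbing calculus (or the invariance statement proved in Appendix~\ref{sec:appendix}), since in the body of the paper only the diffeomorphism type (and the homology class of $\lambda$) matters; if a fully self-contained argument is wanted, the reduction to the single $0$-framed vertex is elementary and I would spell that out.
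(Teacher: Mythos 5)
Your proof is correct and takes essentially the same approach as the paper: both expand $(x,x)_\Gamma$, use the identity $\sum_i d(v_i)a_i^2 = \sum_{\text{edges}}(a_i^2+a_j^2)$ to complete the square as $\sum_{\text{edges}}(a_i+a_j)^2$, trace equality to obtain $a_i=-a_j$ on edges and $-m(v)=d(v)$ on the nonzero component, and identify $S^1\times S^2$ by iterated blow-down of $(-1)$-framed leaves. The only cosmetic difference is that you reduce to connected components at the outset, whereas the paper works with all of $\Gamma$ and extracts the relevant component from the support of $x$ afterward.
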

\begin{proof}
Let $x = c_1v_1 + \cdots + c_nv_n$ be an arbitrary vector in $H_2(W_\Gamma)$. Consider the simple combinatorial equality
\[
\sum_{\substack{\text{pairs }(i, j) \text{ with } i \neq j \\\text{ and } v_i \text{ and } v_j \text{ adjacent}}} (c_i + c_j)^2 = \sum_i 2c_i^2 d(v_i) + \sum_{\substack{\text{pairs }(i, j) \text{ with } i \neq j \\\text{ and } v_i \text{ and } v_j \text{ adjacent}}} 2c_ic_j.
\]
Expanding the intersection pairing and combining this with the above, we obtain
\begin{align}\label{eq:6.2}
(x, x) &= \sum_i c_i^2 m(v_i) - \sum_{\substack{\text{pairs }(i, j) \text{ with } i \neq j \\\text{ and } v_i \text{ and } v_j \text{ adjacent}}} c_ic_j \nonumber \\
&= \sum_i c_i^2 m(v_i) + \sum_i c_i^2 d(v_i) - \dfrac{1}{2} \sum_{\substack{\text{pairs }(i, j) \text{ with } i \neq j \\\text{ and } v_i \text{ and } v_j \text{ adjacent}}} (c_i + c_j)^2.
\end{align}
The sum of the first two terms is at most zero by \eqref{eq:6.1}, while the third term is a sum of squares. Hence,  $(x, x) \leq 0$. 

Moreover, if $(x, x) = 0$, then \eqref{eq:6.2} shows that $c_i = - c_j$ whenever $v_i$ and $v_j$ are adjacent in $\Gamma$. If $x$ is not the zero vector, this means that there is a connected component of $\Gamma$ whose vertices $v$ all have non-zero coefficients in $x$. For such vertices, \eqref{eq:6.2} furthermore shows $-m(v) = d(v)$, as desired. Any subgraph with this property is easily seen to represent $S^1 \times S^2$ by repeatedly blowing down leaves (which necessarily have framing $-1$).
\end{proof}

\begin{lemma}\label{lem:6.2}
Let $\Gamma$ be a negative-definite plumbing forest with no bad vertices. Then $Y_\Gamma$ is an instanton L-space.
\end{lemma}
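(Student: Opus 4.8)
The plan is to show $\dim\Is(Y_\Gamma)=|H_1(Y_\Gamma)|=|\det A_\Gamma|$ by trapping this dimension between an upper bound coming from the surgery exact triangle (Theorem~\ref{exact}) and the lower bound $\dim\Is(Y_\Gamma)\ge|\chi(\Is(Y_\Gamma))|=|H_1(Y_\Gamma)|$ furnished by~\eqref{eq:1.1}, all inside an induction on plumbing graphs. (Since $\Gamma$ is negative-definite, $Y_\Gamma$ is a rational homology sphere, so both sides make sense.)

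First I would reduce to the case where $\Gamma$ has no $-1$-framed leaf and no $-1$-framed isolated vertex: blowing down a $-1$-framed leaf, or deleting a $-1$-framed isolated vertex (which just removes an $S^3$ summand; compare Appendix~\ref{sec:appendix}), leaves $Y_\Gamma$ unchanged, preserves negative-definiteness and the $0$-bad-vertex condition, and decreases the number of vertices. Because a $-1$-framed vertex of degree $\ge 2$ would be bad, after this reduction $\Gamma$ has \emph{no} $-1$-framed vertex. Now induct on the pair $\big(\#\{\text{vertices of }\Gamma\},\ \textstyle\sum_v(-m(v))\big)$ in the lexicographic order; this is well-founded since every framing is $\le-1$. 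If $\Gamma=\emptyset$ then $Y_\Gamma=S^3$ and the claim is clear. Otherwise choose a vertex $v$ — a leaf if $\Gamma$ has one, and otherwise (so $\Gamma$ is a disjoint union of isolated vertices) any vertex — and set $m(v)=-p$ with $p\ge2$. Form $\Gamma-v$ and $\Gamma_{+1}$ as in Section~\ref{sec:5}. Both are again negative-definite $0$-bad-vertex plumbings, strictly smaller in the induction order: $\Gamma-v$ has fewer vertices, and $\Gamma_{+1}$ has the same vertices but smaller total framing. The one delicate point is that $\Gamma_{+1}$ stays negative-\emph{definite}: by Lemma~\ref{lem:6.1} it is negative-semidefinite, and it could degenerate only if the component of $v$ became an $S^1\times S^2$-plumbing, i.e.\ satisfied $-m(w)=d(w)$ for every vertex $w$ of that component; this is impossible for isolated $v$, and for a leaf $v$ it would force some \emph{other} leaf of that component — necessarily also a leaf of $\Gamma$ — to have framing $-1$, contradicting the reduction.

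For such a $v$, the triple $Y_{\Gamma-v},Y_\Gamma,Y_{\Gamma_{+1}}$ is the surgery triad associated to the $\infty$-, $(-p)$-, and $(-p+1)$-surgeries on the unknot $v\subset Y_{\Gamma-v}$, so Theorem~\ref{exact} provides an exact triangle with $\Is(Y_\Gamma)$ as its middle term. (All multicurves that occur are null-homologous mod $2$ by Lemma~\ref{lem:3.6} together with the fact that a meridian of $v$ bounds a disk in $Y_{\Gamma-v}$, so these groups compute the framed instanton homology of the underlying $3$-manifolds.) Exactness gives $\dim\Is(Y_\Gamma)\le\dim\Is(Y_{\Gamma-v})+\dim\Is(Y_{\Gamma_{+1}})$, which by the inductive hypothesis equals $|\det A_{\Gamma-v}|+|\det A_{\Gamma_{+1}}|$; and expanding $\det A_\Gamma$ along the row of $v$ gives $\det A_\Gamma=\det A_{\Gamma_{+1}}-\det A_{\Gamma-v}$, which after accounting for the sign $(-1)^{\#\{\text{vertices}\}}$ of the determinant of a negative-definite form becomes $|\det A_\Gamma|=|\det A_{\Gamma_{+1}}|+|\det A_{\Gamma-v}|$. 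Thus $\dim\Is(Y_\Gamma)\le|H_1(Y_\Gamma)|$, and combined with the lower bound from~\eqref{eq:1.1} this forces equality, i.e.\ $Y_\Gamma$ is an instanton L-space.

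The main obstacle I anticipate is the bookkeeping around the inductive step: making sure that both $\Gamma-v$ and $\Gamma_{+1}$ are genuinely simpler and still lie in the class of negative-definite $0$-bad-vertex plumbings — in particular that $\Gamma_{+1}$ does not degenerate, which is precisely what the reduction to graphs without $-1$-framed leaves is for. Beyond that the argument requires no computation of $\Is$; it is a soft squeeze between the exact-triangle inequality and the Euler-characteristic identity.
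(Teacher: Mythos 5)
Your argument is correct, and it takes a genuinely different route from the paper's. Both proofs induct on the graph, use blow-down invariance, and feed the surgery triad $Y_{\Gamma-v},\,Y_\Gamma,\,Y_{\Gamma_{+1}}$ into Theorem~\ref{exact}, and both need the key observation (via Lemma~\ref{lem:6.1}) that $\Gamma_{+1}$ remains negative-definite. The difference is in how the exact triangle is exploited. The paper computes, via the degree formula~\eqref{eqn:degree} and negative-definiteness of the $2$-handle cobordisms, that the two maps $\Is(W_a,\Sigma_a)$ and $\Is(W_b,\Sigma_b)$ are even and hence the connecting map is odd; since the endpoints $Y_{\Gamma-v}$ and $Y_{\Gamma_{+1}}$ have vanishing odd part by induction, this odd-degree map vanishes, the triangle splits into a short exact sequence, and $I^\#_\text{odd}(Y_\Gamma)=0$ follows. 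You instead use only the crude dimension inequality $\dim\Is(Y_\Gamma)\le\dim\Is(Y_{\Gamma-v})+\dim\Is(Y_{\Gamma_{+1}})$ from exactness, combine the inductive hypothesis with the cofactor-expansion identity $|\det A_\Gamma|=|\det A_{\Gamma-v}|+|\det A_{\Gamma_{+1}}|$ (where the signs indeed work out for negative-definite forms), and pin $\dim\Is(Y_\Gamma)$ against the Euler-characteristic lower bound~\eqref{eq:1.1}. Your version avoids any computation with the $\Z/2\Z$-grading of cobordism maps, at the cost of invoking Scaduto's Euler characteristic formula; both are sound, and you are slightly more explicit than the paper about isolated vertices and the lexicographic well-foundedness of the induction. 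One small bookkeeping point worth keeping in mind: the surgery exact triangle, as set up in Section~\ref{sec:5.2}, enters $Y_{\Gamma-v}$ with the decorated multicurve $\lambda'_{\Gamma-v}=\lambda_{\Gamma-v}\cup(p-1)\mu_v$; your observation that this is still nullhomologous mod $2$ is exactly what is needed to identify the relevant group with $\Is(Y_{\Gamma-v})$, and the paper handles the same point implicitly via Remark~\ref{rem:tautological}.
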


\begin{proof}
We proceed by induction on the cardinality of $\Gamma$. For the graph consisting of a single vertex with decoration $-p$, the claim is evident. Thus, assume the claim has been established for all graphs with cardinality $n$, and let $\Gamma$ have cardinality $n + 1$. If any leaf in $\Gamma$ has framing $-1$, then blowing down gives a graph which still has no bad vertices and has cardinality $n$. Applying the inductive hypothesis then gives the claim. Hence, we may assume that all leaves in $\Gamma$ have framing at most $-2$.

Now, fix any leaf $v$ in $\Gamma$. We proceed by a downwards sub-induction on the framing of $v$, with the base case $m(v) = -1$ being covered by the previous paragraph. Consider the graph $\Gamma_{+1}$. We claim that this is negative-definite. Indeed, either $v$ is an isolated vertex (in which case the claim is obvious), or $v$ belongs to a connected component of $\Gamma$ with at least two leaves. In the latter case, one of these leaves still has framing at most $-2$ in $\Gamma_{+1}$. Keeping in mind that $\Gamma$ is negative-definite, Lemma~\ref{lem:6.1} then shows that $\Gamma_{+1}$ must also be negative-definite. We may thus inductively assume that $Y_{\Gamma - v}$ and $Y_{\Gamma_{+1}}$ are both instanton L-spaces. Hence \begin{equation}\label{eqn:odd}\smash{I^\#_\text{odd}(Y_{\Gamma - v}, \lambda_{\Gamma-v}) = I^\#_\text{odd}(Y_{\Gamma_{+1}}, \lambda_{\Gamma_{+1}}) = 0}.\end{equation}

We now consider  the  surgery exact triangle 
\begin{equation*}\label{eqn:triple} \dots \to \Is(Y_{\Gamma-v},\lambda_{\Gamma-v}) \xrightarrow{I^\#(W_a,\Sigma_a)} \Is(Y_\Gamma,\lambda_\Gamma) \xrightarrow{I^\#(W_b,\Sigma_b)} \Is(Y_{\Gamma_{+1}},\lambda_{\Gamma_{+1}}) \to \cdots. \end{equation*}
The maps $I^\#(W_a,\Sigma_a)$ and $I^\#(W_a,\Sigma_a)$ each have even degree. This follows from   \eqref{eqn:degree}, given that \begin{align*}
&\chi(W_a)=\chi(W_b) = 1,\\
&\sigma(W_a) = \sigma(W_b)=-1,\\
&b_1(Y_{\Gamma-v}) = b_1(Y_{\Gamma}) = b_1(Y_{\Gamma_{+1}}) = 0.\end{align*} The third claim above---that these 3-manifolds are rational homology spheres---follows from the fact that the associated plumbings are negative-definite, as discussed in Section \ref{sec:2.4}. The second claim above also follows from this fact. Indeed, the composition $W_{\Gamma-v}\cup W_a$ is diffeomorphic to $W_\Gamma$, which is  negative-definite. This implies that $W_a$ is negative-definite. Similarly,  $W_{\Gamma}\cup W_b$ is a blowup of $W_{\Gamma_{+1}}$, and is thus  negative-definite. This implies that $W_b$ is negative-definite as well.  The remaining map in the surgery exact triangle  therefore has odd degree, since the sum of the degrees of the three maps is odd \cite[Section 7.5]{Scaduto}. This final map  is then zero by \eqref{eqn:odd}, so the sequence splits and it follows immediately that $\smash{I^\#_\text{odd}(Y_{\Gamma}, \lambda_{\Gamma}) = 0}$, meaning that $Y_\Gamma$ is an instanton L-space, as desired.\end{proof}

\begin{remark}
One can show that if $\Gamma$ is a negative-definite plumbing forest with no bad vertices, then $Y_\Gamma$ is the branched double cover of $S^3$ along a quasi-alternating link. 
\end{remark}

We  may now  prove Theorem \ref{thm:1.1} when $\Gamma$ has no bad vertices. 


\begin{lemma}\label{lem:6.3}
Let $\Gamma$ be a negative-definite plumbing forest with no bad vertices. Then the map \[\Ts: \Hl(\Gamma)\to I^\#_\text{even}(Y_\Gamma) = \Is(Y_\Gamma)\] is an isomorphism. 
\end{lemma}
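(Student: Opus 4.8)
The plan is to argue by induction on the number of vertices of $\Gamma$, closely mirroring the structure of the proof of Lemma~\ref{lem:6.2}. The base case, where $\Gamma$ has a single vertex, is exactly Proposition~\ref{prop:4.4}. For the inductive step I would first dispose of two reductions. If some leaf of $\Gamma$ has framing $-1$, I would blow it down: the resulting forest still has no bad vertices (blowing down a $-1$-framed leaf drops both the framing and the degree of its neighbor by one, so the defining inequality $-m\ge d$ is preserved) and is still negative-definite, and by Lemma~\ref{lem:A.2} the map $\Ts$ for $\Gamma$ is an isomorphism if and only if the corresponding map for the smaller forest is; the inductive hypothesis then finishes this case. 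So I may assume every leaf of $\Gamma$ has framing at most $-2$. I would then fix a leaf $v$ of framing $-p$ with $p\ge 2$ and run a downward sub-induction on $p$, the base case $p=1$ being covered by the blow-down reduction.

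Next I would set up the surgery triad $Y_{\Gamma-v}, Y_\Gamma, Y_{\Gamma_{+1}}$ of Section~\ref{sec:5.1}. Here $\Gamma-v$ is a subforest of a negative-definite forest, hence negative-definite, and it still has no bad vertices (deleting a leaf only decreases degrees). For $\Gamma_{+1}$: it has no bad vertices, since $-m_{\Gamma_{+1}}(v)=p-1\ge 1=d(v)$ and all other vertices are untouched; and it is negative-definite by the same argument as in Lemma~\ref{lem:6.2} (if $v$ is isolated this is immediate from $p\ge 2$, and otherwise $v$'s component contains a second leaf, still of framing $\le -2$ in $\Gamma_{+1}$, so Lemma~\ref{lem:6.1} rules out the semidefinite-but-not-definite alternative). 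Thus by the primary induction $\Ts$ is an isomorphism for $\Gamma-v$, and by the downward sub-induction it is an isomorphism for $\Gamma_{+1}$.

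Now I would invoke the two surgery exact sequences. By Lemma~\ref{lem:6.2}, $Y_{\Gamma-v}$, $Y_\Gamma$, and $Y_{\Gamma_{+1}}$ are all instanton L-spaces, so their framed instanton homologies are concentrated in even grading and in particular $\Iseven(Y_\Gamma)=\Is(Y_\Gamma)$. Running the degree bookkeeping from the proof of Lemma~\ref{lem:6.2} — the maps $\Is(W_a,\Sigma_a)$ and $\Is(W_b,\Sigma_b)$ have even degree and the third map of the triangle has odd degree, hence vanishes between even-supported groups — the surgery exact triangle of Theorem~\ref{exact} collapses to a short exact sequence
\[
0 \longrightarrow \Is(Y_{\Gamma-v},\lambda_{\Gamma-v}) \xrightarrow{\ \Is(W_a,\Sigma_a)\ } \Is(Y_\Gamma,\lambda_\Gamma) \xrightarrow{\ \Is(W_b,\Sigma_b)\ } \Is(Y_{\Gamma_{+1}},\lambda_{\Gamma_{+1}}) \longrightarrow 0,
\]
where $\Is(Y_{\Gamma-v},\lambda'_{\Gamma-v})$ is identified with $\Is(Y_{\Gamma-v},\lambda_{\Gamma-v})$ via the tautological cobordism map. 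On the lattice side, Lemma~\ref{lem:5.2} supplies the right-exact sequence $\Hl(\Gamma-v)\xrightarrow{\mathbb{A}}\Hl(\Gamma)\xrightarrow{\mathbb{B}}\Hl(\Gamma_{+1})\to 0$, i.e.\ $\ker\mathbb{B}=\im\mathbb{A}$ with $\mathbb{B}$ surjective, and Lemma~\ref{lem:5.3} fits the two sequences into a commuting ladder with vertical maps $\Ts$, the outer two of which are isomorphisms. A routine diagram chase then forces the middle $\Ts$ to be an isomorphism: for surjectivity, given $y\in\Is(Y_\Gamma)$ one uses surjectivity of the right-hand $\Ts$ and of $\mathbb{B}$ to find $x\in\Hl(\Gamma)$ with $\Is(W_b,\Sigma_b)(\Ts(x))=\Is(W_b,\Sigma_b)(y)$, whence $y-\Ts(x)\in\ker\Is(W_b,\Sigma_b)=\im\Is(W_a,\Sigma_a)$ is pulled back through the left-hand $\Ts$ and $\mathbb{A}$ to write $y=\Ts(x+\mathbb{A}(u))$; for injectivity, $\Ts(x)=0$ gives $\mathbb{B}(x)=0$, so $x=\mathbb{A}(u)$, and then $\Is(W_a,\Sigma_a)(\Ts(u))=0$ with $\Is(W_a,\Sigma_a)$ and the left $\Ts$ both injective forces $u=0$ and $x=0$.

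The main obstacle is ensuring the top row is a genuine short exact sequence — in particular that $\Is(W_a,\Sigma_a)$ is injective — which depends on all three manifolds being instanton L-spaces (Lemma~\ref{lem:6.2}), and therefore genuinely uses the no-bad-vertex hypothesis; for general almost-rational plumbings $Y_\Gamma$ need not be an L-space, which is precisely why the argument in Section~\ref{sec:7} must be more delicate. The remaining technical care lies in keeping $\Gamma_{+1}$ negative-definite with no bad vertices so that the nested inductions are legitimate, which is handled by the downward sub-induction on the framing together with Lemma~\ref{lem:6.1}.
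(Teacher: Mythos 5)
Your proof is correct and follows essentially the same strategy as the paper's: induction on the number of vertices with Proposition~\ref{prop:4.4} as the base case, the blow-down reduction via Lemma~\ref{lem:A.2}, the downward sub-induction on the framing of a leaf, and then the five-lemma applied to the commuting ladder from Lemmas~\ref{lem:5.2} and~\ref{lem:5.3} using the L-space conclusion of Lemma~\ref{lem:6.2}. You spell out the diagram chase and the verification that $\Gamma-v$ and $\Gamma_{+1}$ remain negative-definite with no bad vertices, details which the paper compresses by deferring to the proof of Lemma~\ref{lem:6.2}; the only blemish is the phrase ``downward sub-induction on $p$'' (it should be downward on the framing $m(v)=-p$, i.e.\ upward on $p$, with base case $p=1$), but the logic you actually run is the correct one.
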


\begin{proof}
We proceed by the same induction as in Lemma~\ref{lem:6.2}. For the graph consisting of a single vertex with decoration $-p$, the claim is  Proposition~\ref{prop:4.4}. If any leaf in $\Gamma$ has framing $-1$, then blowing down along this leaf and using the naturality of lattice homology and the map $\Ts$ under blow-downs (see Appendix~\ref{sec:appendix}) proves the claim. Indeed, if $\tilde\Gamma$ is the result of blowing down $\Gamma$ along this leaf (deleting the leaf from $\Gamma$ and increasing the framing of its neighbor by one), then it follows from Lemma \ref{lem:A.2} that there is a commutative square 
\[
\xymatrix@C=3.5em{
& \Is(Y_{\Gamma}, \lambda_{\Gamma}) \ar[r]^{\cong} & \Is(Y_{\tilde\Gamma}, \lambda_{\tilde\Gamma}) \\
& \Hl(\Gamma) \ar[r]^{\cong} \ar[u]^{\Ts} & \Hl(\tilde\Gamma) \ar[u]^-{\Ts}
}
\]
By induction, the rightmost vertical map is an isomorphism; therefore, so is the leftmost.

Now fix any leaf $v$. As in Lemma \ref{lem:6.2}, we proceed by downward sub-induction on the framing of $v$. The base case $m(v)=-1$ is covered above. We may  therefore assume, as in the proof of Lemma \ref{lem:6.2}, that $\Gamma-v$ and $\Gamma_{+1}$ are both negative-definite with no bad vertices, which implies that $Y_{\Gamma-v}$ and $Y_{\Gamma_{+1}}$ are both instanton L-spaces, by Lemma \ref{lem:6.2}. 
We thus have the commutative diagram below, by Lemma~\ref{lem:5.3}:
\[
\xymatrix{
0 \ar[r] & I^\#_\text{even}(Y_{\Gamma-v}, \lambda_{\Gamma-v}) \ar[r] & I^\#_\text{even}(Y_{\Gamma}, \lambda_{\Gamma})\ar[r] & I^\#_\text{even}(Y_{\Gamma_{+1}}, \lambda_{\Gamma_{+1}}) \ar[r] &0 \\ 
& \Hl(\Gamma-v) \ar[r]^{\mathbb{A}} \ar[u]^-{\Ts}&\Hl(\Gamma) \ar[r]^{\mathbb{B}} \ar[u]^{\Ts} & \Hl(\Gamma_{+1}) \ar[r] \ar[u]^-{\Ts} & 0 \\}
\]
The top row is  the surgery exact triangle while the bottom row is exact by Lemma~\ref{lem:5.2}. That the middle two   maps in the top row have even degree while the rightmost map has odd degree follows from the fact that  $\Gamma-v$, $\Gamma$, and $\Gamma_{+1}$ are  negative-definite, just as in the proof of Lemma \ref{lem:6.2}. We may inductively assume that   the leftmost and rightmost vertical maps are isomorphisms; hence, the middle vertical map is also an isomorphism, by the five-lemma.
\end{proof}

A slight extension of this argument proves Theorem~\ref{thm:1.1} when $\Gamma$ has at most one bad vertex. Recall that this means there is at most one vertex $v$ in $\Gamma$ which violates the inequality \eqref{eq:6.1}.

\begin{lemma}\label{lem:1bad}
Let $\Gamma$ be a negative-definite plumbing forest with one bad vertex. Then the map \[\Ts: \Hl(\Gamma)\to I^\#_\text{even}(Y_\Gamma)\] is an isomorphism. 
\end{lemma}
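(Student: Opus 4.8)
The plan is to mimic the proof of Lemma~\ref{lem:6.3}, but to use the bad vertex $v$ as the fixed vertex along which we perform the surgery exact sequence, while running an induction that ``improves'' the framing of $v$ until we have reduced to a situation already handled. Concretely, I would fix the unique bad vertex $v$ of $\Gamma$ and induct downward on $-m(v)$, with an outer induction on $|\Gamma|$ to deal with $(-1)$-framed leaves via the blow-down naturality of Lemma~\ref{lem:A.2} (exactly as in the first paragraph of the proof of Lemma~\ref{lem:6.3}). Since $v$ is the only bad vertex, both $\Gamma-v$ and $\Gamma_{+1}$ have \emph{no} bad vertices: deleting $v$ can only decrease valences of the remaining vertices, so it cannot create a bad vertex, and increasing the framing of $v$ makes $v$ itself no longer bad (or, if it is still bad, we are not yet at the base case and simply continue the downward induction). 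One must check $\Gamma_{+1}$ is still negative-definite: this is where I would invoke the hypothesis that $\Gamma$ is negative-definite together with Lemma~\ref{lem:6.1} applied to the $0$-bad-vertex graph $\Gamma_{+1}$ — if $\Gamma_{+1}$ failed to be negative-definite it would contain an $S^1\times S^2$ summand, which (by the structure of such a summand and the fact that it is obtained from $\Gamma$ by a single $+1$ framing change at $v$) would force $\Gamma$ itself to be non-negative-definite, a contradiction. There is a subtlety if $\Gamma_{+1}$ is not negative-definite; in that degenerate case $Y_{\Gamma_{+1}}$ is not a rational homology sphere, and I would need to handle it separately, most cleanly by noting it cannot actually arise under the stated hypotheses, or by appealing to the blow-down reduction.

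With $\Gamma-v$ and $\Gamma_{+1}$ both negative-definite $0$-bad-vertex plumbings, Lemma~\ref{lem:6.2} tells us $Y_{\Gamma-v}$ and $Y_{\Gamma_{+1}}$ are instanton L-spaces, and Lemma~\ref{lem:6.3} tells us $\Ts$ is an isomorphism for both of them. The base case of the downward induction on $-m(v)$ is the moment $v$ ceases to be bad, i.e.\ when we reach a $0$-bad-vertex plumbing, at which point Lemma~\ref{lem:6.3} applies directly. For the inductive step, I would write down the same commutative ladder as in Lemma~\ref{lem:6.3}:
\[
\xymatrix{
0 \ar[r] & \Iseven(Y_{\Gamma-v}, \lambda_{\Gamma-v}) \ar[r] & \Iseven(Y_{\Gamma}, \lambda_{\Gamma})\ar[r] & \Iseven(Y_{\Gamma_{+1}}, \lambda_{\Gamma_{+1}}) \ar[r] &0 \\
& \Hl(\Gamma-v) \ar[r]^{\mathbb{A}} \ar[u]^-{\Ts}&\Hl(\Gamma) \ar[r]^{\mathbb{B}} \ar[u]^{\Ts} & \Hl(\Gamma_{+1}) \ar[r] \ar[u]^-{\Ts} & 0
}
\]
where the bottom row is exact by Lemma~\ref{lem:5.2}, the squares commute by Lemma~\ref{lem:5.3}, and the top row is the even part of the instanton surgery exact triangle (Theorem~\ref{exact}). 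The key point, again exactly as in Lemma~\ref{lem:6.3}, is that since $Y_{\Gamma-v}$ and $Y_{\Gamma_{+1}}$ are L-spaces, the odd-degree part of the triangle forces the connecting map to vanish, so the top row is a genuine short exact sequence $0 \to \Iseven(Y_{\Gamma-v}) \to \Iseven(Y_\Gamma) \to \Iseven(Y_{\Gamma_{+1}}) \to 0$ — one verifies the degree bookkeeping using \eqref{eqn:degree} and the negative-definiteness of $W_a,W_b$, just as before. The outer vertical maps are isomorphisms (left by Lemma~\ref{lem:6.3} for the $0$-bad-vertex graph $\Gamma-v$, right by the downward induction hypothesis on $\Gamma_{+1}$, whose bad vertex $v$ has larger framing), so the five-lemma gives that the middle map $\Ts\colon \Hl(\Gamma)\to \Iseven(Y_\Gamma)$ is an isomorphism.

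One point I would take care with is choosing the surgery vertex: in Lemma~\ref{lem:6.3} one was free to pick \emph{any} leaf, but here it is essential to pick the bad vertex $v$ so that the two ``smaller'' graphs $\Gamma-v$ and $\Gamma_{+1}$ are genuinely no worse (indeed better) — in particular I should not assume $v$ is a leaf, and the proof must not rely on that. The blow-down reduction at the start still lets me assume no leaf has framing $-1$, but $v$ itself may have any valence. The main obstacle, and the place where the argument is genuinely more delicate than Lemma~\ref{lem:6.3}, is establishing that $\Gamma_{+1}$ remains negative-definite (and hence that $Y_{\Gamma_{+1}}$ is a rational homology sphere, so that the cobordism $W_b$ is negative-definite and the degree count works): unlike the $0$-bad-vertex setting, we cannot simply cite Lemma~\ref{lem:6.1} for $\Gamma$, and we must argue that the $+1$ framing change at the bad vertex does not tip the form into being only negative-semidefinite. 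I expect this to follow from the negative-definiteness of $\Gamma$ together with a short argument that an emergent $S^1\times S^2$ component after the framing change would contradict negative-definiteness of $\Gamma$, but it is the step that requires the most thought and should be spelled out carefully.
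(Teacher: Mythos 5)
Your argument has a sign error at its core that propagates through the whole proof: increasing the framing of $v$ (passing to $\Gamma_{+1}$) makes $v$ \emph{worse}, not better. Recall a vertex $v$ is bad when $-m(v) < d(v)$; replacing $m(v)$ by $m(v)+1$ decreases $-m(v)$, so the inequality $-m(v) < d(v)$ is preserved (and even more slack). Hence $\Gamma_{+1}$ still has $v$ as a bad vertex, contrary to your claim that ``increasing the framing of $v$ makes $v$ itself no longer bad.'' As a result, you cannot invoke Lemma~\ref{lem:6.2} or~\ref{lem:6.3} for $\Gamma_{+1}$, the connecting map of the surgery triangle is not known to vanish, and your ladder does not give the short exact top row you wrote. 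Your inductive bookkeeping has the same problem: a ``downward induction on $-m(v)$'' whose base case is ``when $v$ ceases to be bad'' would have its base case at \emph{large} $-m(v)$, so the inductive hypothesis should be applied to $\Gamma_{-1}$ (where $-m(v)$ is larger), not to $\Gamma_{+1}$. Your worry about whether $\Gamma_{+1}$ remains negative-definite is a real obstruction to the route you chose, and it is not resolvable in general.

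The paper instead shifts the roles: it performs an upward induction on $m(v)$, using the surgery triad $(Y_{\Gamma - v}, Y_{\Gamma_{-1}}, Y_\Gamma)$, so that $\Gamma_{-1}$ plays the middle slot in the lattice surgery sequence and $\Gamma$ plays the third. Since $\Gamma_{-1}$ is obtained from $\Gamma$ by decreasing a framing, it is automatically negative-definite (no extra argument needed), and decreasing the framing of $v$ sufficiently eventually produces a 0-bad-vertex plumbing, giving the base case via Lemma~\ref{lem:6.3}. In that ladder only $Y_{\Gamma-v}$ needs to be an instanton L-space (it has $0$ bad vertices), so the relevant connecting map $\Iseven(Y_\Gamma) \to \Isodd(Y_{\Gamma-v})$ vanishes, the top row is right-exact, the two left verticals are isomorphisms (by Lemma~\ref{lem:6.3} and induction), and the five-lemma finishes. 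You should rewrite the proof with $\Gamma_{-1}$ in the middle and the induction going in the direction that makes $v$ \emph{less} bad as you descend to the base case.
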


\begin{proof}
Let $\Gamma$ be a negative-definite plumbing forest with a single bad vertex $v$. We proceed by upwards induction on the framing of $v$. Note that decreasing the framing of $v$ results in a graph that is still negative-definite, and decreasing the framing sufficiently results in a plumbing with no bad vertices. The map $\Ts$ for this  0-bad-vertex plumbing is   an isomorphism by Lemma~\ref{lem:6.3}. By induction, it is therefore enough to assume that $\Ts$ is an isomorphism for  the plumbing $\Gamma_{-1}$ obtained by decreasing the framing of $v$ by one, and prove that $\Ts$ is an isomorphism for $\Gamma$.  By Lemma~\ref{lem:5.3}, we have    the commutative diagram 
\[
\xymatrix{
& I^\#_\text{even}(Y_{\Gamma-v}, \lambda_{\Gamma-v}) \ar[r] & I^\#_\text{even}(Y_{\Gamma_{-1}}, \lambda_{\Gamma_{-1}})\ar[r] & I^\#_\text{even}(Y_{\Gamma}, \lambda_{\Gamma}) \ar[r] &0 \\ 
& \Hl(\Gamma-v) \ar[r]^{\mathbb{A}} \ar[u]^-{\Ts}&\Hl(\Gamma_{-1}) \ar[r]^{\mathbb{B}} \ar[u]^-{\Ts} & \Hl(\Gamma) \ar[r] \ar[u]^-{\Ts} & 0. \\}
\]
That the two leftmost  maps in the top row have even degree while the rightmost map has odd degree follows from the fact that  $\Gamma-v$, $\Gamma_{-1}$, and $\Gamma$ are all negative-definite, just as in the proofs of Lemmas \ref{lem:6.2} and \ref{lem:6.3}. The map in the top right is zero since $\Gamma - v$ has no bad vertices (and is negative-definite), which implies that $Y_{\Gamma - v}$ is  an instanton L-space, by Lemma \ref{lem:6.2}.  The  two leftmost vertical maps are isomorphisms, by our inductive assumption and Lemma~\ref{lem:6.3}. The rightmost vertical map is then also an isomorphism, by the five-lemma. \end{proof}

\begin{remark}The above proof cannot be extended to plumbings with two bad vertices. Indeed, if $v$ is a bad vertex in such a graph, then $\Gamma - v$ has one bad vertex. In this case, $Y_{\Gamma - v}$ may \textit{not} be an instanton L-space, and so the top right map in the above commutative diagram need not be zero.
\end{remark}


\section{The isomorphism theorem for almost-rational plumbings}\label{sec:7} In this section, we first explain how the construction of lattice homology given in Definition \ref{def:3.1} is related to the version of lattice cohomology defined in \cite{Nemethi}, and then use this relationship together with work of N{\'e}methi  to finish the proof of Theorem \ref{thm:1.1}. We then  prove Corollaries~\ref{cor:1.2}, \ref{cor:1.2a}, and \ref{cor:1.3} using the relationship between lattice cohomology and Heegaard Floer homology.

\subsection{Sign conventions in lattice homology}\label{sec:7.1}
The first step in relating $\Hl(\Gamma)$ with the lattice cohomology theory in \cite{Nemethi} is to reconcile different sign conventions in the pairing associated to the plumbing forest $\Gamma$ and in the Type II relation of Definition \ref{def:3.1}. 

In our convention, adjacent vertices of $\Gamma$ have intersection pairing $-1$; as mentioned in Section \ref{sec:2.4}, this corresponds to labelling the edges of $\Gamma$ by $-1$. Let $\smash{\overline{\Gamma}}$ be the graph obtained from $\Gamma$ by changing the edge labellings to $+1$, so that adjacent vertices of $\smash{\overline{\Gamma}}$ have pairing $+1$. We can then define the lattice homology $\Hl(\smash{\overline{\Gamma}})$ as the quotient of \[V_{\smash{\overline{\Gamma}}} = \mathbb{C}\otimes\Char(\smash{\overline{\Gamma}})\]  by the  relations in Definition \ref{def:3.1}. We claim that that $\Hl(\Gamma)$ and $\Hl(\smash{\overline{\Gamma}})$ are isomorphic. Indeed, forests are bipartite, so we can partition the vertices of $\Gamma$ into two sets $V$ and $V'$ so that no two vertices of $V$ or of $V'$ are adjacent. The change in pairing convention from $\Gamma$ to $\smash{\overline{\Gamma}}$ corresponds to reversing the orientation of each 2-sphere corresponding to a vertex of $V$. For each $k\in \Char(\Gamma),$ let $\overline{k}\in\Char(\smash{\overline{\Gamma}})$ be the characteristic vector obtained from $k$ by negating its evaluation on each element of $V$. The linear isomorphism from $V_\Gamma$ to $V_{\smash{\overline{\Gamma}}}$ defined by \[1\otimes k \mapsto 1\otimes \overline{k}\] respects the relations of Definition \ref{def:3.1}, and  therefore induces an isomorphism \[\Hl(\Gamma)\to\Hl(\smash{\overline{\Gamma}}),\] as claimed. 

We next show that omitting the factor of $(-1)^{v^2}$ appearing in the  Type II relation of Definition \ref{def:3.1} yields an isomorphic version of lattice homology, which we denote by $\Hlb(\Gamma)$.

Recall from Section \ref{sec:2.4} that for  $k\in\Char(\Gamma)$ we denote by $[k]$ the orbit of $k$ under the action of $H_2(W_\Gamma)$ on $\Char(\Gamma)$, where $x\in H_2(W_\Gamma)$ acts by \[k\mapsto k+2x^*.\] These orbits partition $\Char(\Gamma)$. Given an orbit $[k_0]$ and some $k \in [k_0]$,  write \[k = k_0 + \sum_i \left( c_i \cdot 2v_i^* \right),\] and define
\[
\sigma(k) = \sum_{i \text{ with } v_i^2 \text{ odd}} c_i.
\]
Then the linear automorphism of $\C \otimes \Char(\Gamma)$ defined by sending \[1 \otimes k\mapsto (-1)^{\sigma(k)} \otimes k\] sends the Type II relations of Definition \ref{def:3.1} to relations which are the same except without the factor of  $\smash{(-1)^{v^2}}$. The automorphism above then induces an isomorphism \[\Hl(\Gamma)\to\Hlb(\Gamma).\] Combining this with the isomorphism $\Hl(\Gamma) \cong \Hl(\smash{\overline{\Gamma}})$, we have that \begin{equation}\label{eqn:Hlb}\Hl(\Gamma) \cong \Hlb(\smash{\overline{\Gamma}}).\end{equation}
In summary, we are free to (a) use the  standard $+1$ sign convention for the pairing of adjacent vertices of $\Gamma$, and (b) omit the factor of $\smash{(-1)^{v^2}}$ appearing in the Type II relation of  Definition~\ref{def:3.1}. The result  is a $\C$-vector space which is isomorphic to  $\Hl(\Gamma)$ as originally defined. It is this  slightly altered version that is most closely related to the lattice cohomology described next.

\subsection{Lattice cohomology and Heegaard Floer homology}\label{sec:7.2}
We now provide a brief review of the construction of lattice cohomology due by N\'emethi in \cite{Nemethi}, which is itself a reformulation of  Ozsv\'ath and Szab\'o's original construction in  \cite{OSplumbed}. 

Let $\Gamma$ be a negative-definite plumbing forest. Fix an element $k_0\in \Char(\Gamma)$. 
View $\smash{H_2(W) \cong \Z^{|\Gamma|}}$ as the lattice generated by  the vertices of $\Gamma$. Define a \textit{weight function} on this lattice by setting
\[
w(x) = - ((x, x) + \ev{k_0}{x})/2,
\] where the convention in the pairing $(-,-)$ above is that adjacent vertices in $\Gamma$ pair to $+1$ (in contrast with our convention; see Section \ref{sec:7.1}).
For $0\leq q \leq|\Gamma|$ consider the set of $q$-dimensional unit cubes in $\mathbb{R}^{|\Gamma|}$ which have vertices lying in $\mathbb{Z}^{|\Gamma|} \subset \mathbb{R}^{|\Gamma|}$. For any such cube $\Box_q$, set
\[
w(\Box_q) = \max_{x \text{ a vertex of } \Box_q} \{w(x)\}.
\] 
For any integer $n$, define the \textit{sublevel set} $S_n$ of weight $n$ to be the cubical subcomplex of $\mathbb{R}^{|\Gamma|}$ formed by taking the union over all cells (of all dimensions) with weight  less than or equal to $n$:
\[
S_n = \bigcup_{w(\Box_q) \leq n} \Box_q.
\]
In \cite[Definition 3.1.11]{Nemethi}, N\'emethi defines the lattice cohomology by setting
\[
\mathbb{H}^+(\Gamma, [k_0]) = \bigoplus_{n} H^0(S_n; \Z)
\]
where $H^0(S_n; \Z)$ is the  zeroth singular cohomology of $S_n$ with $\Z$-coefficients.\footnote{We can  extend the definition of lattice cohomology to include the higher cohomology groups of $S_n$; see \cite[Definition 3.1.11]{Nemethi}. However, we will not do this here.} Note that this is just the $\Z$-span of the connected components of $S_n$. 
The inclusion $i:S_{n-1} \to S_n$ induces a map
\[
i^*: H^0(S_n; \Z) \rightarrow H^0(S_{n-1}; \Z),
\]
and we define an action of $\Z[U]$ on $\mathbb{H}^+(\Gamma, [k_0])$ where $U$ acts by $i^*$. As suggested by the notation, N{\'e}methi shows in \cite{Nemethi} that $\mathbb{H}^+(\Gamma, [k_0])$ does not depend on the choice of representative in $[k_0]$. Moreover, he proves the following:

\begin{theorem}\cite[Theorem 5.2.2]{Nemethi}\label{thm:7.1}
 If $\Gamma$ is almost-rational, then \[\HFpo(-Y_\Gamma)=0,\] and there is an isomorphism of $\Z[U]$-modules,
\[
 \HFpe(-Y_\Gamma, [k_0]) \cong \mathbb{H}^+(\Gamma, [k_0]).
\] 
\end{theorem}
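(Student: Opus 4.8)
The final statement to be proven is Theorem~\ref{thm:7.1}, which is quoted as \cite[Theorem 5.2.2]{Nemethi}. Since this is cited to the literature rather than proven in the paper, I interpret the task as sketching how such a theorem is (or would be) established.

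\medskip

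The plan is to deduce the isomorphism from the case of rational plumbings by an induction that mirrors the surgery-exact-sequence arguments used elsewhere in this paper, combined with the Ozsv\'ath--Szab\'o identification of $\HFp$ of plumbed manifolds with a combinatorial model. First I would recall that for a rational plumbing $\Gamma$, the manifold $-Y_\Gamma$ is an L-space (indeed $\HFpo(-Y_\Gamma)=0$ and $\HFpe(-Y_\Gamma,[k_0])$ is the ``minimal'' module $\mathcal{T}^+$ in each $\spinc$ structure), while on the lattice side N\'emethi's characterization of rational graphs says precisely that the weight function $w$ attains its minimum on a connected sublevel set for every $n$, so that each $H^0(S_n;\Z)\cong \Z$ for $n\geq \min w$ and vanishes below, giving $\mathbb{H}^+(\Gamma,[k_0])\cong \mathcal{T}^+$. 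Matching these establishes the base case. (One must also treat $0$-bad-vertex plumbings, which are rational, and check the $E_8$-type examples where a rational graph can still have a bad vertex.)

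Next I would set up the inductive step. Given an almost-rational $\Gamma$ with a distinguished vertex $v$ whose framing can be decreased to produce a rational graph, run the induction on (minus) the framing of $v$: one knows the result for $\Gamma_{-1}$ (smaller framing, hence ``closer'' to rational) and for $\Gamma-v$ (which one arranges to have fewer bad vertices, hence handled by a nested induction or by the $0$-bad-vertex base case), and one wants it for $\Gamma$. The surgery triad $Y_{\Gamma-v}, Y_{\Gamma_{-1}}, Y_\Gamma$ gives a long exact sequence in $\HFp$, and Ozsv\'ath--Szab\'o's work provides a corresponding exact triangle on the combinatorial side relating the sublevel-set complexes for the three graphs (this is the Heegaard Floer analogue of Lemma~\ref{lem:5.2} here, and is essentially \cite[Lemma 2.9]{OSplumbed}). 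Because $-Y_{\Gamma-v}$ is an L-space (it has no bad vertices after suitable reduction, or one fewer bad vertex with its own inductive control), the relevant connecting map vanishes and one gets short exact sequences; the five-lemma, applied to the commuting ladder between the Heegaard Floer exact sequence and the lattice exact sequence, then promotes the known isomorphisms for $\Gamma_{-1}$ and $\Gamma-v$ to an isomorphism for $\Gamma$. This also forces $\HFpo(-Y_\Gamma)=0$, since odd-degree contributions would obstruct exactness.

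The two technical points that require the most care are: (i) verifying that the combinatorial exact triangle relating $S_n(\Gamma-v)$, $S_n(\Gamma_{-1})$, $S_n(\Gamma)$ is \emph{$U$-equivariantly} compatible with the Heegaard Floer surgery exact sequence, including the grading/degree bookkeeping that shows which of the three maps is the ``odd'' one and hence vanishes on an L-space summand --- this is the analogue of the degree computation via \eqref{eqn:degree} carried out in Lemma~\ref{lem:6.2}; and (ii) handling the reduction of the number of bad vertices of $\Gamma-v$: one must be able to delete or modify $v$ so that the complement graph is genuinely simpler, which uses negative-definiteness (Lemma~\ref{lem:6.1}) and the combinatorics of almost-rational graphs. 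I expect the main obstacle to be (i): establishing that N\'emethi's reformulation via sublevel sets carries a surgery exact triangle which matches, on the nose and with all gradings, the topologically defined one --- this is exactly the kind of ``standard property of $\HFp$ whose instanton analogue is not yet available'' alluded to in the introduction, and in the Heegaard Floer setting it rests on Ozsv\'ath--Szab\'o's original plumbing paper together with N\'emethi's translation. Once that compatibility is in hand, the induction and the five-lemma are routine.
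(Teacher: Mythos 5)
The paper does not prove this result; it is quoted verbatim from \cite[Theorem 5.2.2]{Nemethi} and invoked as a black box (chiefly in the proof of Corollary~\ref{cor:1.2}). You correctly identified that. There is therefore no ``paper's own proof'' to compare against, so what follows assesses your reconstruction on its own merits.

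Your sketch is faithful to the architecture that the paper itself uses on the instanton side in Sections~\ref{sec:6}--\ref{sec:7}, which the introduction says was modeled on \cite{OSplumbed,Nemethi}: the base case is rational graphs, for which $\mathbb{H}^+(\Gamma,[k_0])\cong\Z[U,U^{-1}]/\Z[U]$ by \cite[Theorem 6.3]{NemethiOS} and $-Y_\Gamma$ is an L-space by \cite[Theorem 8.3]{NemethiOS}; the inductive step runs the Ozsv\'ath--Szab\'o surgery exact triangle \cite[Lemma 2.9]{OSplumbed} against the combinatorial one and applies the five lemma. This is consistent with how the argument actually proceeds in the literature. Two refinements are worth making. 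First, for an almost-rational $\Gamma$ with distinguished vertex $v$, the cleanest statement is that $\Gamma-v$ is \emph{rational} (since deleting a vertex from a rational graph preserves rationality), and that directly gives $-Y_{\Gamma-v}$ an L-space; framing this only as ``fewer bad vertices'' is a weaker justification on its own, precisely because a rational graph can still have a bad vertex (the $E_8$ example in Section~\ref{sec:2.4}). Second, your point (i) — matching the combinatorial and topological exact sequences $U$-equivariantly and with correct gradings, so that the ``odd'' map in the triad is the one that vanishes — is indeed where N\'emethi's argument does real work; his treatment packages this via graded roots and the $\tau$-function, machinery your sketch does not name but which is where the bookkeeping you flag as the main obstacle is actually carried out. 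Subject to those caveats, the outline is sound and aligned with the approach taken both in \cite{Nemethi} and in this paper's instanton analogue.
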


In the theorem above, $\HFp(-Y_\Gamma, [k_0])$ is the summand of $\HFp(-Y_\Gamma)$ in the $\textrm{spin}^c$ structure  corresponding to the orbit $[k_0]$, per Remark \ref{rmk:spinc}. One forms the \emph{total} lattice cohomology $\mathbb{H}^+(\Gamma)$ by summing the groups $\mathbb{H}^+(\Gamma, [k_0])$ over all distinct orbits $[k_0]$. We then have that 
\[\HFpe(-Y_\Gamma)\cong \mathbb{H}^+(\Gamma).\]
\noindent
In fact, the total lattice homology $\HFp(-Y_\Gamma)$ comes with a natural $\Z$-grading, which turns Theorem~\ref{thm:7.1} into an isomorphism of absolutely $\Z$-graded $\Z[U]$-modules (up to an overall grading shift). For further discussion, see \cite{Nemethi}.

\begin{remark} \label{torsion}
Note that  $\mathbb{H}^+(\Gamma)$ has no $\Z$-torsion since it is just a direct sum of zeroth cohomology groups. Therefore, $\HFp(-Y_\Gamma)$ has no $\Z$-torsion for almost-rational plumbings $\Gamma$, by Theorem \ref{thm:7.1}.
\end{remark}

We next address the relationship between our version of lattice homology and the version of  lattice cohomology defined by N{\'e}methi as above. First, note that while we defined $\Hl(\Gamma)$ over $\C$ in order to relate it to framed instanton  homology, we could just as easily have defined it over $\Z$. Let $\Hl(\Gamma;\Z)$ denote the combinatorial theory defined as in Definition \ref{def:3.1} but over the integers.

\begin{lemma}
\label{lem:isom}  There is an isomorphism of abelian groups, \[\Hl(\Gamma;\Z)\cong \ker U \subset \mathbb{H}^+(\Gamma).\]
\end{lemma}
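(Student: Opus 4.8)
The plan is to compute both sides explicitly over $\Z$ and match natural bases. First I would invoke the sign-convention isomorphisms of Section~\ref{sec:7.1} — each of which is given by a formula of the form $1\otimes k\mapsto \pm\,1\otimes k'$ and hence makes sense integrally — to replace $\Hl(\Gamma;\Z)$ by $\Hlb(\overline{\Gamma};\Z)$, the model built from the $+1$ pairing convention with the factor $(-1)^{v^2}$ removed from the Type~II relation; this is the convention in which N\'emethi's weight function naturally lives. Since neither Type~I nor Type~II moves a characteristic vector out of its $H_2(W_\Gamma)$-orbit, and $\mathbb{H}^+(\Gamma)=\bigoplus_{[k_0]}\mathbb{H}^+(\Gamma,[k_0])$ with $U$ respecting this splitting, it suffices to fix an orbit $[k_0]$ and prove $\Hlb(\overline{\Gamma};\Z)_{[k_0]}\cong\ker\!\big(U\colon\mathbb{H}^+(\Gamma,[k_0])\to\mathbb{H}^+(\Gamma,[k_0])\big)$.

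Fix $k_0\in\Char(\Gamma)$ and identify its orbit with $L=H_2(W_\Gamma)\cong\Z^{|\Gamma|}$ via $k=k_0+2x^*\leftrightarrow x$; then the generator $1\otimes k$ of $\Hlb(\overline{\Gamma};\Z)_{[k_0]}$ corresponds to the lattice point $x$, which carries the weight $w(x)=-\tfrac12\big((x,x)+\ev{k_0}{x}\big)$. The computational input is the pair of identities
\[
w(x+v)-w(x)=-\tfrac12\big(\ev{k}{v}+v^2\big),\qquad w(x-v)-w(x)=\tfrac12\big(\ev{k}{v}-v^2\big)
\]
for each vertex $v$ (with $k=k_0+2x^*$). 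From these I would read off the dictionary: the Type~I relation applies to $(k,v)$ exactly when the neighbour $x+v$ or $x-v$ has strictly smaller weight than $x$; and when $\ev{k}{v}=\pm v^2$ one of $x+v$, $x-v$ has the same weight as $x$, and the Type~II relation identifies $1\otimes k$ with the generator corresponding to that neighbour. Thus, in $\Hlb(\overline{\Gamma};\Z)_{[k_0]}$, the class of $x$ is set to zero when $x$ has a strictly lower neighbour, and $x$ is identified with each of its equal-weight neighbours.

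Call a lattice point $x$ \emph{deep} if the connected component of the sublevel complex $S_{w(x)}$ containing $x$ is disjoint from $S_{w(x)-1}$; equivalently, $x$ is joined inside $S_{w(x)}$ only to points of weight $w(x)$. A short chase of the two relations then shows: (i) if $x$ is not deep, a path of equal-weight edges leads from $x$ to a point possessing a strictly lower neighbour, so repeated Type~II moves followed by a Type~I move kill the class of $x$; (ii) if $x$ is deep, its component in $S_{w(x)}$ consists entirely of cells of weight $w(x)$, so no relation annihilates the class of $x$, and Type~II identifies it exactly with the other lattice points of that component; and (iii) conversely, deep points in a common component of a level set are linked by equal-weight edge paths. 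Hence $\Hlb(\overline{\Gamma};\Z)_{[k_0]}$ is free abelian on the set of connected components of the $S_n$ that are disjoint from $S_{n-1}$, summed over $n$. On the other side, $\ker\big(U\colon\mathbb{H}^+(\Gamma,[k_0])\to\mathbb{H}^+(\Gamma,[k_0])\big)=\bigoplus_n\ker\big(i^*\colon H^0(S_n)\to H^0(S_{n-1})\big)$, and since $i^*$ is restriction of locally constant functions, $\ker i^*$ is spanned by precisely those components of $S_n$ that are disjoint from $S_{n-1}$. Matching these bases, and then summing over orbits $[k_0]$ using $\Hl(\Gamma;\Z)\cong\Hlb(\overline{\Gamma};\Z)$, yields the claimed isomorphism; in particular both groups are free abelian, so no torsion issue arises.

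I expect the main obstacle to be the combinatorial heart of the third step: verifying that a characteristic vector is nonzero in $\Hlb(\overline{\Gamma};\Z)_{[k_0]}$ exactly when the associated lattice point is \emph{deep}, rather than merely a coordinate-wise local minimum of $w$. The subtlety is that a point which is a local minimum in every coordinate direction can nonetheless be killed, because a sequence of Type~II moves may transport it along a flat ridge to a point with a strictly lower neighbour — this is precisely the cancellation already visible in Example~\ref{ex:3.4}. Once this is pinned down, the weight-difference computation and the identification of $\ker i^*$ with the span of the "new" components of the sublevel sets are routine.
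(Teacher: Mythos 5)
Your proof is correct and follows the same overall strategy as the paper's: reduce to a fixed orbit $[k_0]$, pass to the $\overline{\Gamma}$ and signless conventions of Section~\ref{sec:7.1} (noting these isomorphisms are defined integrally), identify the orbit with the lattice via $k=k_0+2x^*$, compute the weight differences $w(x\pm v)-w(x) = \mp\tfrac12(\ev{k}{v}\pm v^2)$ to translate Type~I and II into the local geometry of $w$, and finally match with $\ker i^* \subset H^0(S_n)$. Where you genuinely improve on the paper's exposition is in the combinatorial heart: the paper states that $\ker U$ is ``generated by local minima of $w$, with the one relation that adjacent equal-weight lattice points are identified.'' Read literally, this presentation is ambiguous when a local minimum is joined by an equal-weight edge to a \emph{non}-minimum that itself has a strictly lower neighbour; as you correctly point out (and as Example~\ref{ex:3.4} already illustrates), such a local minimum is killed by a chain of Type~II moves followed by a Type~I move. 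Your replacement of ``local minimum'' by ``deep'' --- the component of $S_{w(x)}$ containing $x$ is disjoint from $S_{w(x)-1}$ --- is precisely the correct closure of the paper's relations, it matches cleanly with the description of $\ker i^*$ as spanned by components of $S_n$ disjoint from $S_{n-1}$, and it makes the freeness of both sides manifest (so the absence of torsion is immediate rather than tacit). In short: same route, but with the one tacit step made precise.
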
 

\begin{proof}
First note that  we can write \[\Hl(\Gamma;\Z)=\bigoplus_{\textrm{distinct orbits } [k_0]} \Hl(\Gamma,[k_0];\Z),\] where  $\Hl(\Gamma,[k_0];\Z)$ is the quotient of the free abelian group generated by  $k\in[k_0]$ by the relations in Definition \ref{def:3.1}. Now fix an arbitrary $k_0\in\Char(\Gamma)$. It suffices to show that \[\Hl(\Gamma,[k_0];\Z)\cong \ker U \subset \mathbb{H}^+(\Gamma,[k_0]).\] Let us first understand the right-hand side.

We say that a lattice point $x\in H_2(W_\Gamma)$ is a \textit{local minimum of $w$} if $w(x) \leq w(x')$ for all lattice points $x'$ adjacent to $x$. Then it is not hard to see that $\ker U\subset \mathbb{H}^+(\Gamma,[k_0])$ is generated by local minima of $w$ (really, by elements of the form $1\otimes x$ where $x$ is a local minimum, but we will not worry about this distinction), with the one relation that if $x$ and $x'$ are two adjacent lattice points with the same weight, then $x \sim x'$. 

We will show that the identification of $H_2(W_\Gamma)$ with $[k_0] \subset \Char(\Gamma)$ which sends $x$ to $k_0+2x^*$ induces an isomorphism \begin{equation}\label{keriso}\ker U\subset \mathbb{H}^+(\Gamma,[k_0]) \to \Hl(\Gamma,[k_0];\Z).\end{equation} Let $x\in H_2(W_\Gamma)$ and suppose $x'=x+v$ is an adjacent element in the lattice. Then $x$  and $x'$ are identified with the elements $k=k_0 + 2x^*$ and $k+2v^*$ in the orbit $[k_0]$. Moreover, we have 
\begin{align*}
w(x + v) - w(x) &= -((x + v, x + v) + \ev{k_0}{x + v})/2 + ((x, x) + \ev{k_0}{x})/2 \\
&= -(2(x, v) + (v, v) + \ev{k_0}{v})/2 \\
&= -(\ev{k}{v} + v^2)/2.
\end{align*}In particular,  $\ev{k}{v} > - v^2$ iff $w(x + v) < w(x)$, which implies that $x$ is not a local minimum.  A similar computation shows that $\ev{k}{v} <  v^2$ iff $w(x - v) < w(x)$. This shows, by the Type I relation in Definition \ref{def:3.1}, that the local minima of $w$ generate $\Hl(\Gamma,[k_0];\Z)$ under the identification above. To prove that \eqref{keriso} is an isomorphism, it  only remains to show that the relation $x\sim x'$ is identified with the Type II relation of Definition \ref{def:3.1}. But this follows from the computation above, which shows that $w(x+v)=w(x)$ if and only if $\ev{k}{v} = - v^2$ and $w(x-v)=w(x)$ if and only if $\ev{k}{v} =  v^2$.

Note that we are using  the alternative conventions for $\Hl(\Gamma)$ discussed in Section \ref{sec:7.1}, in which adjacent vertices of $\Gamma$ have pairing $+1$, and there is no factor of $\smash{(-1)^{v^2}}$ in the Type II relation.
\end{proof}

\begin{remark}
As observed in the proof of Lemma \ref{lem:isom}, there is a natural decomposition of $\Hl(\Gamma)$ along $\textrm{spin}^c$ structures on $Y_\Gamma$. This gives rise to a $\textrm{spin}^c$ decomposition of $\Iseven(Y_\Gamma)$ via the isomorphism in Theorem \ref{thm:1.1}. We do not prove here that this decomposition is well-defined (i.e. independent of $\Gamma$), though we expect that it is. If so, then it is notable for the fact that there is no such $\textrm{spin}^c$ decomposition of framed instanton homology groups in general.
\end{remark}


\subsection{The instanton isomorphism for almost-rational plumbings}

We now complete the proof of Theorem~\ref{thm:1.1}, for all almost-rational plumbings. The first step is the following analogue of Lemmas~\ref{lem:6.2} and \ref{lem:6.3} for rational plumbings.

\begin{lemma}\label{lem:7.2}
Let $\Gamma$ be a rational plumbing forest. Then $Y_{\Gamma}$ is an instanton L-space, and the map \[\Ts: \Hl(\Gamma)\to I^\#_\text{even}(Y_\Gamma)=\Is(Y_\Gamma)\] is an isomorphism. 
\end{lemma}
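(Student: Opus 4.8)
The plan is to reduce the rational case to the $0$- and $1$-bad-vertex cases already handled in Section~\ref{sec:6}, using an induction on the ``rationality defect'' measured along the procedure that witnesses rationality. Recall that a rational plumbing need not have at most one bad vertex (the $E_8$ example), so we cannot invoke Lemma~\ref{lem:1bad} directly; instead we exploit the characterization of rational plumbings in terms of N\'emethi's lattice cohomology (see \cite[Section~6.2]{NemethiOS} and the discussion after Theorem~\ref{thm:7.1}): $\Gamma$ is rational if and only if $\mathbb{H}^+(\Gamma)$ is ``minimal,'' i.e.\ $\mathbb{H}^+(\Gamma,[k_0])\cong \Z[U]$-module of the form $\Z[U]/U^{n_{[k_0]}}\cdot$(nothing)\,---\,more precisely, $\ker U\subset \mathbb{H}^+(\Gamma)$ has rank exactly $|H_1(Y_\Gamma)|$, with each $\spinc$ summand contributing a single generator. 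By Lemma~\ref{lem:isom}, this says $\dim_\C \Hl(\Gamma) = |H_1(Y_\Gamma)|$. On the other hand, by Theorem~\ref{thm:7.1} and \cite[Theorem~1.2]{OSplumbed}, rationality of $\Gamma$ forces $\HFhat(-Y_\Gamma)$ to have rank $|H_1(Y_\Gamma)|$, so $-Y_\Gamma$ (equivalently $Y_\Gamma$, since rank is orientation-insensitive) is a Heegaard Floer L-space.

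First I would prove that $Y_\Gamma$ is an instanton L-space. The key tool is Lemma~\ref{lem:6.2} together with a blow-down/handle-slide reduction: by a theorem of N\'emethi--Okuma (or directly from the defining inequality $-(\langle K,x\rangle + x^2)/2\geq 1$ for $x>0$), every rational plumbing can be obtained from a $0$-bad-vertex plumbing by a sequence of moves, each of which either blows down a $-1$-leaf or increases a single framing by one while staying negative-definite and rational. I would run an induction on the number of such moves. The base case is Lemma~\ref{lem:6.2}. For the inductive step, when the last move increases the framing of a vertex $v$ by one to produce $\Gamma$ from $\Gamma_{-1}$, I use the surgery exact triangle relating $Y_{\Gamma-v}$, $Y_{\Gamma_{-1}}$, $Y_\Gamma$ exactly as in Lemmas~\ref{lem:6.2} and \ref{lem:1bad}: the point is that $\Gamma - v$ is again rational (deleting a vertex from a rational graph yields a rational graph — this is a standard fact about rational singularities, and follows from the defining inequality restricted to the sublattice), hence $Y_{\Gamma-v}$ is an instanton L-space by induction, which kills the connecting map and forces $\Is_{\mathrm{odd}}(Y_\Gamma)=0$. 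The degree bookkeeping (which maps are even, which is odd) is identical to the proof of Lemma~\ref{lem:6.2}, using negative-definiteness of all the cobordisms involved.

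Next, granting that $Y_\Gamma$ is an instanton L-space, I would prove $\Ts$ is an isomorphism by the same induction. The base case (and the $-1$-leaf blow-down step, via Lemma~\ref{lem:A.2}) are handled exactly as in Lemma~\ref{lem:6.3}. For the framing-increase step, I invoke the commutative diagram of Lemma~\ref{lem:5.3}:
\[
\xymatrix{
& I^\#_\text{even}(Y_{\Gamma-v}, \lambda_{\Gamma-v}) \ar[r] & I^\#_\text{even}(Y_{\Gamma_{-1}}, \lambda_{\Gamma_{-1}})\ar[r] & I^\#_\text{even}(Y_{\Gamma}, \lambda_{\Gamma}) \ar[r] &0 \\
& \Hl(\Gamma-v) \ar[r]^{\mathbb{A}} \ar[u]^-{\Ts}&\Hl(\Gamma_{-1}) \ar[r]^{\mathbb{B}} \ar[u]^-{\Ts} & \Hl(\Gamma) \ar[r] \ar[u]^-{\Ts} & 0, \\}
\]
whose top row is exact because the connecting map out of $I^\#_{\text{even}}(Y_{\Gamma_{-1}})$ lands in $I^\#_{\text{odd}}(Y_{\Gamma-v})=0$ (as $Y_{\Gamma-v}$ is an instanton L-space, just established) and whose bottom row is exact by Lemma~\ref{lem:5.2}. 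The two leftmost vertical maps are isomorphisms by the inductive hypothesis, so the five-lemma gives that $\Ts\colon\Hl(\Gamma)\to I^\#_{\text{even}}(Y_\Gamma)$ is an isomorphism. Equality $I^\#_{\text{even}}(Y_\Gamma)=\Is(Y_\Gamma)$ then holds because $Y_\Gamma$ is an instanton L-space.

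The main obstacle I anticipate is the combinatorial/algebro-geometric input asserting that every rational plumbing is reachable from a $0$-bad-vertex plumbing by the allowed moves, and that rationality is preserved under deleting a vertex and under the relevant framing decreases. These facts are essentially contained in N\'emethi's work on rational graphs and the ``Laufer-type'' computation sequences, but they must be cited or reproved carefully; getting the bookkeeping right (in particular, verifying that each intermediate graph stays negative-definite and rational, so that all the L-space and degree arguments apply) is the delicate part. If a clean such reduction is unavailable, the fallback is to argue directly from the numerical characterization: use $\dim_\C\Hl(\Gamma)=|H_1(Y_\Gamma)|=\dim_\C\Is(Y_\Gamma)$ (the first equality from Lemma~\ref{lem:isom} plus minimality of $\mathbb{H}^+$ for rational graphs, the second from the instanton L-space property and \eqref{eq:1.1}) together with injectivity of $\Ts$, and prove injectivity by the surgery-triangle induction above without needing surjectivity separately.
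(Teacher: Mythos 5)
Your overall plan—reduce to a $0$-bad-vertex base by an upward induction on framings, using the commutative diagram of Lemma~\ref{lem:5.3} and the five-lemma, together with N\'emethi's identification $\dim_\C\Hl(\Gamma)=|H_1(Y_\Gamma)|$ for rational $\Gamma$—is exactly the paper's strategy. However, the order in which you establish the two conclusions introduces a genuine gap.

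In your second paragraph you try to prove the L-space property for $Y_\Gamma$ \emph{before} and \emph{independently of} the isomorphism statement, claiming that $Y_{\Gamma-v}$ being an instanton L-space ``kills the connecting map and forces $\Is_{\mathrm{odd}}(Y_\Gamma)=0$.'' This does not follow. In the exact triangle $\Is(Y_{\Gamma-v})\xrightarrow{a}\Is(Y_{\Gamma_{-1}})\xrightarrow{b}\Is(Y_\Gamma)\xrightarrow{c}\Is(Y_{\Gamma-v})$, with $a,b$ even and $c$ odd, the vanishing of $\Isodd(Y_{\Gamma-v})$ only shows $c|_{\mathrm{even}}=0$; combined with $\Isodd(Y_{\Gamma_{-1}})=0$ it shows $c|_{\mathrm{odd}}:\Isodd(Y_\Gamma)\to\Iseven(Y_{\Gamma-v})$ is \emph{injective}, but not zero. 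To conclude $\Isodd(Y_\Gamma)=0$ one also needs $a|_{\mathrm{even}}$ to be injective, which is not available at that stage without the lattice-homology framework. Your ``fallback'' paragraph has the same problem in a different guise: using ``$|H_1(Y_\Gamma)|=\dim_\C\Is(Y_\Gamma)$\ldots from the instanton L-space property'' is circular, and injectivity of $\Ts$ alone only gives $\dim\Iseven(Y_\Gamma)\geq|H_1(Y_\Gamma)|$, which is already automatic from \eqref{eq:1.1} and does not force $\Isodd(Y_\Gamma)=0$.

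The paper's proof resolves this by reversing the logical order. It first observes, via \cite[Theorem~6.3]{NemethiOS} and Lemma~\ref{lem:isom}, that $\dim_\C\Hl(\Gamma)=|H_1(Y_\Gamma)|$; hence, once $\Ts:\Hl(\Gamma)\to\Iseven(Y_\Gamma)$ is shown to be an isomorphism, the Euler characteristic identity $\chi(\Is(Y_\Gamma))=|H_1(Y_\Gamma)|$ automatically forces $\Isodd(Y_\Gamma)=0$. The five-lemma argument that proves $\Ts$ is an isomorphism needs only the inductive hypothesis that $Y_{\Gamma-v}$ is an L-space (so that $c|_{\mathrm{even}}=0$ makes the top row of the diagram exact). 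In short: the L-space property should be a \emph{consequence} of the isomorphism, not an independent input to it. Your third paragraph (the five-lemma step) is essentially correct; simply delete the separate L-space argument, run the five-lemma induction to get the isomorphism, and then conclude the L-space property from the dimension count.
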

\begin{proof}
We follow the proof  in \cite[Theorem 8.3]{NemethiOS}. If $\Gamma$ is a rational plumbing, then deleting a vertex from $\Gamma$ or decreasing the framing of any vertex also yields a rational graph. Moreover, every negative-definite 0-bad-vertex plumbing is rational (see \cite[Section 6.2]{NemethiOS}). Hence, we proceed by upwards induction on the framings of the vertices, with the class of negative-definite 0-bad-vertex plumbings as the base case. In contrast with the proof of Lemma~\ref{lem:6.3}, however, we will need the following   computation due to N\'emethi: by \cite[Theorem 6.3]{NemethiOS}, if $\Gamma$ is a rational graph, then \[\mathbb{H}^+(\Gamma,[k_0]) \cong \Z[U,U^{-1}]/\Z[U]\] for each orbit $[k_0]$. It follows that \[\Z\cong \ker U \subset \mathbb{H}^+(\Gamma, [k_0]) \] for each of the $|H_1(Y_\Gamma)|$ orbits. By Lemma \ref{lem:isom}, this implies that  \[\dim\Hl(\Gamma)=|H_1(Y_\Gamma)|.\] Hence, if $\Gamma$ is a rational graph, then showing that $\smash{\Ts: \Hl(\Gamma)\to I^\#_\text{even}(Y_\Gamma)}$ is an isomorphism in fact proves that $Y_\Gamma$ is an instanton L-space.

Now, let $\Gamma$ be a rational graph, and assume  that the lemma has been established for $\Gamma - v$ and $\smash{\Gamma_{-1}}$ (as defined in the proof of Lemma~\ref{lem:1bad}). Consider again the commutative diagram of Lemma \ref{lem:5.3}:
\[
\xymatrix{
& I^\#_\text{even}(Y_{\Gamma-v}, \lambda_{\Gamma-v}) \ar[r] & I^\#_\text{even}(Y_{\Gamma_{-1}}, \lambda_{\Gamma_{-1}})\ar[r] & I^\#_\text{even}(Y_{\Gamma}, \lambda_{\Gamma}) \ar[r] &0 \\ 
& \Hl(\Gamma-v) \ar[r]^{\mathbb{A}} \ar[u]^{\Ts}&\Hl(\Gamma_{-1}) \ar[r]^{\mathbb{B}} \ar[u]^{\Ts} & \Hl(\Gamma) \ar[r] \ar[u]^{\Ts} & 0 \\}
\]
The rightmost map in the top row is zero because  $Y_{\Gamma - v}$ is an instanton L-space (as explained above) and because the map has odd degree (because all three plumbings are negative-definite, as in the proof of Lemma \ref{lem:6.2}). The two leftmost vertical maps are isomorphisms, by assumption, so the rightmost vertical map is also an isomorphism, by the five-lemma, completing the proof by induction.
\end{proof}

We may now complete the proof of Theorem \ref{thm:1.1}.

\begin{proof}[Proof of Theorem~\ref{thm:1.1}] Recall that a plumbing $\Gamma$ is almost-rational if there exists a single vertex of $\Gamma$ such that decreasing the framing on this vertex yields a rational plumbing. The proof that \[\Ts: \Hl(\Gamma)\to I^\#_\text{even}(Y_\Gamma)\] is an isomorphism for  almost-rational plumbings thus follows from Lemma~\ref{lem:7.2} in exactly the same way that the proof for negative-definite 1-bad-vertex plumbings (Lemma~\ref{lem:1bad}) followed from the result for negative-definite 0-bad-vertex plumbings (Lemma~\ref{lem:6.3}).
\end{proof}

We end this section by proving Corollaries~\ref{cor:1.2}, \ref{cor:1.2a}, and \ref{cor:1.3}.

\begin{proof}[Proof of Corollary~\ref{cor:1.2}]
Let $\Gamma$ be an almost-rational plumbing. From Lemma \ref{lem:isom}, we have that  \[\Hl(\Gamma)\cong \ker U \subset \C\otimes \mathbb{H}^+(\Gamma).\] Moreover, Theorem \ref{thm:7.1} implies that \[\C\otimes \mathbb{H}^+(\Gamma)\cong \HFpe(-Y_\Gamma;\C) = \HFp(-Y_\Gamma;\C).\] In the long exact sequence \[\dots \to\HFhat(-Y_\Gamma;\C)\to \HFp(-Y_\Gamma;\C) \xrightarrow{\cdot U} \HFp(-Y_\Gamma;\C) \to   \dots,\] the  middle two maps have even  degree while the rest have odd degree. Since $\HFp(-Y_\Gamma;\C)$ is supported in even grading, it follows that \[\Hl(\Gamma)\cong \ker U \cong \HFhat_{\textrm{even}}(-Y_\Gamma;\C)\cong \HFhat_{\textrm{even}}(Y_\Gamma;\C).\] Therefore, \[\Iseven(Y_\Gamma)\cong\HFhat_{\textrm{even}}(Y_\Gamma;\C)\] by Theorem \ref{thm:1.1}. Finally, since $\Is(Y_\Gamma)$ and $\HFhat(Y_\Gamma)$ both have Euler characteristic $|H_1(Y_\Gamma)|$, it follows that  \[\Is(Y_\Gamma)\cong\HFhat(Y_\Gamma;\C)\] as $\Z/2\Z$-graded complex vector spaces, proving the corollary.
\end{proof}

\begin{proof}[Proof of Corollary~\ref{cor:1.2a}] Suppose  that $Y$ is a Seifert fibered rational homology sphere. Then $Y$ has base either $S^2$ or $\mathbb{RP}^2$. If the base is $S^2$, then (after possibly reversing  the orientation of $Y$)  $Y\cong Y_{\Gamma}$ where $\Gamma$ is some almost-rational (in fact, negative-definite 1-bad-vertex) plumbing. In this case, the $\Z/2\Z$-graded isomorphism \begin{equation}\label{eqn:isosf}\Is(Y)\cong\HFhat(Y;\C)\end{equation} follows from Theorem \ref{thm:1.1}. 

If instead the base orbifold is $\mathbb{RP}^2(\alpha_1,\dots,\alpha_k)$ for some $k\geq 1$,  one can show that $Y$ is an instanton L-space by mimicking Boyer--Gordon--Watson's inductive proof in \cite[Proposition~18]{BGW} that $Y$ is a Heegaard Floer L-space over $\Z/2\Z$ (and therefore over $\Z$ and $\C$). Indeed, in the base case $k=1$, they observe that  $Y$ is either $\mathbb{RP}^3 \# \mathbb{RP}^3$, which is both an instanton and Heegaard Floer L-space, or that $Y$ is an elliptic manifold, having a Seifert fibration with base $S^2(2,2,n)$ for some $n$. In the latter case, $Y$ is a Heegaard Floer L-space and, by the  isomorphism \eqref{eqn:isosf} when the base is $S^2$, we conclude that $Y$ is also an instanton L-space. Boyer--Gordon--Watson then induct on $k$, using nothing from Heegaard Floer homology beyond the surgery exact triangle and the Euler characteristic formula for $\HFhat$ of rational homology spheres. Their inductive argument thus translates verbatim to the framed instanton setting.
\end{proof}

\begin{proof}[Proof of Corollary \ref{cor:1.3}] Let $Y$ be a Seifert fibered rational homology sphere. If the base is $\mathbb{RP}^2$ then $Y$ is an instanton L-space, as in the proof of Corollary \ref{cor:1.2a}, and $\pi_1(Y)$ is not left-orderable, by work of Boyer--Rolfsen--Wiest \cite[Theorem~1.3]{BRW}.
If the base is $S^2$ instead, then $Y\cong Y_\Gamma$ for some almost-rational plumbing $\Gamma$,  and so we have a chain of equivalences:
\begin{align*}
Y\text{ is an instanton L-space} &\Longleftrightarrow Y\text{ is a Heegaard Floer L-space over }\C \\
&\Longleftrightarrow Y\text{ is a Heegaard Floer L-space over }\Z/2\Z \\
&\Longleftrightarrow Y\text{ has no horizontal foliations} \\
&\Longleftrightarrow \pi_1(Y)\text{ is not left-orderable}.
\end{align*}
The first equivalence is Theorem \ref{thm:1.1}. The second equivalence comes from the fact that $\HFp(-Y_\Gamma)$ has no 2-torsion, per Remark \ref{torsion}. The third and fourth equivalences are due to Lisca--Stipsicz \cite[Theorem~1.1]{LiscaStipsicz3} and Boyer--Rolfsen--Wiest \cite[Theorem~1.3]{BRW}, respectively.
\end{proof}

\section{Examples and applications}\label{sec:8}
Let $Y$ be a rational homology sphere which is not an instanton L-space. One might then hope that there exists an irreducible homomorphism $\pi_1(Y)\to SU(2)$. 
 Unfortunately, as discussed in Section~\ref{sec:1.1}, this is not known to  hold in general.  Instead, we have the following result:
\begin{theorem}\cite[Theorem 4.6]{BSstein}
\label{thm:irrep}
Let $Y$ be a rational homology sphere which is not an instanton L-space. If $\pi_1(Y)$ is cyclically finite, then there exists an irreducible homomorphism $\pi_1(Y) \rightarrow SU(2)$.
\end{theorem}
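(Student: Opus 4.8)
The plan is to argue by contradiction. Suppose $Y$ is a rational homology sphere that is not an instanton L-space, and suppose toward a contradiction that every homomorphism $\rho\colon\pi_1(Y)\to SU(2)$ is reducible. A reducible $\rho$ has abelian image, hence factors through $H_1(Y;\Z)$, a finite abelian group of order $n:=|H_1(Y;\Z)|$. After conjugation its image lies in a fixed maximal torus $U(1)\subset SU(2)$, so the abelian representations are parametrized by $\mathrm{Hom}(H_1(Y;\Z),U(1))\cong H_1(Y;\Z)$, up to the residual Weyl involution $\phi\leftrightarrow\bar\phi$ and the conjugation action of $SU(2)$. Concretely, the representation variety $R(Y)=\{\rho\colon\pi_1(Y)\to SU(2)\}$ (not quotiented by conjugation) is a disjoint union of copies of $S^2\cong SU(2)/U(1)$, one for each Weyl pair $\{\phi,\bar\phi\}$ with noncentral image, together with one isolated point for each of the finitely many central characters $\phi\colon H_1(Y;\Z)\to\{\pm1\}$; summing Euler characteristics gives $\chi(R(Y))=n$.

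Next I would feed this description into the Chern--Simons picture underlying framed instanton homology. Recall that $\dim\Is(Y)=\tfrac12\dim I_*(Y\#T^3,\lambda\cup\gamma)$, and that the critical points of the unperturbed Chern--Simons functional for $(Y\#T^3,\lambda\cup\gamma)$ are, via the two-to-one identification noted in Section~\ref{sec:2.1}, precisely the elements of $R(Y)$. Under our assumption this critical set is exactly the Morse--Bott manifold just described. This is where the hypothesis that $\pi_1(Y)$ be cyclically finite does its work: it guarantees that a generic small holonomy perturbation of the Chern--Simons functional creates no critical points outside a neighborhood of $R(Y)$ and is nondegenerate along $R(Y)$ in the Morse--Bott sense (a perfect Morse function on each $S^2$, and nondegenerate isolated critical points at the central characters). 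The obstruction to this nondegeneracy is carried by the twisted first cohomology of the flat bundles determined by the characters $\phi$, and these groups are computed by the first Betti numbers of the cyclic covers of $Y$; cyclic finiteness is precisely the statement that they vanish. The net effect is that the perturbed complex computing $I_*(Y\#T^3,\lambda\cup\gamma)$ may be taken to have exactly $2n$ generators --- two for each $S^2$ and one for each central character, doubled by the two-to-one identification --- so that $\dim\Is(Y)\le n$.

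To close the argument I would invoke Scaduto's Euler characteristic formula \eqref{eq:1.1}: $\chi(\Is(Y))=\dim\Iseven(Y)-\dim\Isodd(Y)=n$. Since also $\dim\Iseven(Y)+\dim\Isodd(Y)=\dim\Is(Y)\le n$, subtracting forces $\dim\Isodd(Y)=0$, i.e.\ $Y$ is an instanton L-space. This contradicts our standing assumption, so $\pi_1(Y)$ must admit an irreducible $SU(2)$-representation. Note that this endgame needs no understanding of the Floer differential; the bound $\dim\Is(Y)\le n$ on the number of generators already suffices.

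The main obstacle is the perturbation analysis in the second step, and specifically pinning down the role of cyclic finiteness: one must show that the reducible flat connections can be made nondegenerate within the reducible stratum by a perturbation that does not spawn new (possibly irreducible) critical points, and that this nondegeneracy is governed by --- indeed equivalent to --- the vanishing of the relevant twisted $H^1$, equivalently the first Betti numbers of the cyclic covers of $Y$. (This is also why the condition holds automatically when the universal abelian cover is a rational homology sphere.) The bookkeeping around central representations, the $T^3$ and $\Z/2$ stabilizer factors, and the absolute $\Z/2$-grading is then routine once the perturbed critical set is under control.
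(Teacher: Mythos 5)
This theorem appears in the paper only as a citation to \cite[Theorem 4.6]{BSstein}; no proof is given here, so there is no internal proof to compare against. What follows is an assessment of your proposal on its own terms.

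The global skeleton is sound and the endgame is exactly right: with $n=|H_1(Y;\Z)|$, the enumeration of the $SU(2)$-abelian representation variety $R(Y)$ as $c$ central points and $m$ two-spheres with $c+2m=n$, the computation $\chi(R(Y))=n$, the two-to-one bookkeeping through the $T^3$ stabilization giving a chain complex with at most $2n$ generators, and then Scaduto's Euler characteristic formula \eqref{eq:1.1} together with $\dim\Is(Y)\leq n$ forcing $\Isodd(Y)=0$ --- all of this is correct and is exactly how one should close the argument.

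The genuine gap is the one you flag at the end: the perturbation analysis in the middle step is not a deferred technicality, it is the proof, and you have reduced the theorem to it without carrying it out. Two things must actually be established. First, that cyclic finiteness forces Morse--Bott nondegeneracy of every reducible flat connection on $Y\#T^3$. You should carry out the twisted cohomology computation rather than gesture at it: for a reducible $\rho$ with abelianization a character $\phi$ of $H_1(Y;\Z)$, the adjoint bundle splits as $\underline{\R}\oplus\C_{\phi^2}$ (with $U(1)$ acting trivially on $\R$ and by weight two on $\C$), so the obstruction space is $H^1(Y;\R)\oplus H^1(Y;\C_{\phi^2})$; the first summand vanishes because $Y$ is a rational homology sphere, and one must then relate $H^1(Y;\C_{\phi^2})$ to the first Betti numbers of the cyclic covers of $Y$ and check that this is precisely what the definition of ``cyclically finite'' in \cite{BSstein} controls, \emph{and} identify the tangent space to the $SU(2)/U(1)$ orbit inside $\ker(\text{Hessian})$ on $Y\#T^3$ to see that the complement vanishes. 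Second, that the Morse--Bott perturbation breaking each $S^2$ into two points can be realized by an admissible holonomy perturbation and does not spawn new, possibly irreducible, critical points near the reducible locus; smallness of the perturbation keeps new critical points near $R(Y)$, but ruling out branching off the reducible family inside that neighborhood is exactly what the nondegeneracy of the first item buys you, and the implication needs to be made. Without these two steps you have the correct strategy but not a proof; they constitute the substance of \cite[Theorem 4.6]{BSstein}.
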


We will not define  cyclical finiteness here; see the discussion surrounding \cite[Theorem 4.6]{BSstein} for details. However, we do note that if the universal abelian cover of $Y$ is a rational homology sphere, then $\pi_1(Y)$ is cyclically finite. Hence, we immediately obtain a proof of Corollary~\ref{cor:1.4}.
\begin{proof}[Proof of Corollary~\ref{cor:1.4}]
We are assuming that $\Gamma$ is almost-rational. Therefore, since $Y_\Gamma$ is not a Heegaard Floer L-space,  it is also not an instanton L-space, by Corollary~\ref{cor:1.2}. Since the universal abelian cover of $Y_\Gamma$ is  a rational homology sphere, $\pi_1(Y_\Gamma)$ is cyclically finite per the discussion above, and so there exists an irreducible homomorphism $\pi_1(Y_\Gamma)\to SU(2)$, by Theorem \ref{thm:irrep}.
\end{proof}

As mentioned in Section~\ref{sec:1.1}, Pedersen gives a complete combinatorial characterization of which graph manifolds have rational homology sphere universal abelian covers \cite[Theorem 1.3]{Pedersen}. This proceeds by first extracting an auxiliary graph (called the \textit{splice diagram}) from the plumbing diagram; see the discussion in \cite[Section 2.2]{Pedersensplice}. The desired characterization is then expressed in terms of various decorations on the splice diagram; see \cite[Definition 4.1]{Pedersen}. 

Below, we use Pedersen's characterization to give an example of a rational homology sphere satisfying the hypotheses of Corollary~\ref{cor:1.4}.

\begin{example}
Let $Y$ be the boundary of the plumbing given in Figure~\ref{fig:8.1}. An examination of the resulting splice diagram and \cite[Definition 4.1]{Pedersen} shows that the universal abelian cover of $Y$ is a rational homology sphere. (There are certainly other ways one can prove this fact, but we give this example here to demonstrate the method.)
We claim that $Y$ is the boundary of an almost-rational plumbing (this is not immediate from the plumbing diagram here) and is not an instanton L-space.  For this, we quote a  result of N\'emethi. According to \cite[Table 1]{Laufer} and the discussion preceding it, $Y$ is the link of an elliptic singularity.\footnote{The singularity  is given by  $z^2 = (y + x^3)(y^2 + x^7)$. See the entry for $A_{n, ****}$ in \cite[Table 1]{Laufer}.} By \cite[Proposition~4.2.2]{Nemethi}, $Y$ is then the boundary of an almost-rational plumbing for which \[ \Hl(\Gamma;\Z)\cong\ker U\subset \mathbb{H}^+(\Gamma)\] has rank  greater than $|H_1(Y)|$. Therefore, $Y$ is not an instanton L-space, by Theorem \ref{thm:1.1}, and thus it admits an irreducible homomorphism $\pi_1(Y)\to SU(2)$, by Theorem \ref{thm:irrep} and the discussion above.
\end{example}

\begin{figure}[t]
\center
\includegraphics[scale=0.9]{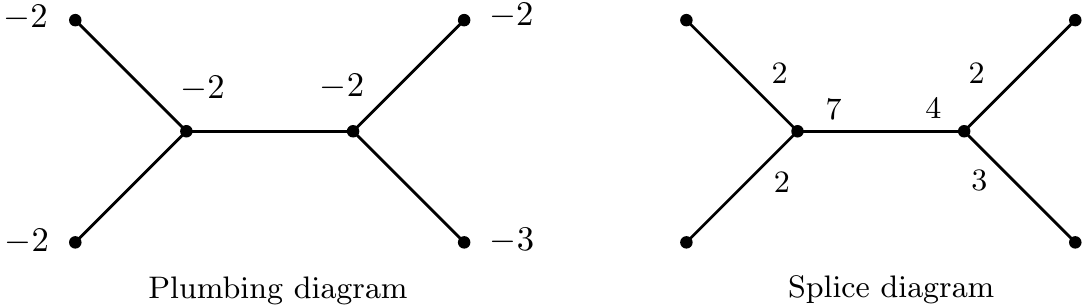}
\caption{Left: plumbing diagram for $Y$. Right: the resulting splice diagram, for use in applying \cite[Theorem 1.3]{Pedersen}.}\label{fig:8.1}
\end{figure}

Note that the manifold $Y$ described by Figure~\ref{fig:8.1} is not Seifert fibered, according to results of Neumann \cite{Neumann}.  To see this, we apply the algorithm of \cite[\S4]{Neumann} to the plumbing diagram of Figure~\ref{fig:8.1} to put it in ``normal form'':
\[ \begin{tikzpicture}[every path/.style={thick},endpoint/.style={circle,fill=black,minimum width=4pt,inner sep=0,outer sep=0}]
\begin{scope}
\node[endpoint,label={180:\scriptsize$-2$}] (a) at (0,0) {};
\node[endpoint,label={180:\scriptsize$-2$}] (c) at (135:1) {};
\node[endpoint,label={180:\scriptsize$-2$}] (d) at (225:1) {};
\node[endpoint,label={0:\scriptsize$-2$}] (b) at (1.5,0) {};
\node[endpoint,label={0:\scriptsize$-2$}] (e) at ($(b)+(45:1)$) {};
\node[endpoint,label={0:\scriptsize$-3$}] (f) at ($(b)+(315:1)$) {};
\draw (c) -- (a) -- (b) -- (e) (d) -- (a) (b) -- (f);
\end{scope}
\node at (3.5,0) {\Large$\leadsto$};
\begin{scope}[xshift=4.5cm]
\node[endpoint,label={90:\scriptsize$0$},label={270:\scriptsize$[-1]$}] (a) at (0,0) {};
\node[endpoint,label={90:\scriptsize$-1$}] (b) at (1,0) {};
\node[endpoint,label={0:\scriptsize$-2$}] (c) at (2,0) {};
\node[endpoint,label={0:\scriptsize$-2$}] (e) at ($(c)+(45:1)$) {};
\node[endpoint,label={0:\scriptsize$-3$}] (f) at ($(c)+(315:1)$) {};
\draw (a) -- (b) -- (c) -- (e) (c) -- (f);
\end{scope}
\node at (8.5,0) {\Large$\leadsto$};
\begin{scope}[xshift=9.5cm]
\node[endpoint,label={90:\scriptsize$1$},label={270:\scriptsize$[-1]$}] (a) at (0,0) {};
\node[endpoint,label={0:\scriptsize$-1$}] (c) at (1,0) {};
\node[endpoint,label={0:\scriptsize$-2$}] (e) at ($(c)+(45:1)$) {};
\node[endpoint,label={0:\scriptsize$-3$}] (f) at ($(c)+(315:1)$) {};
\draw (a) -- (c) -- (e) (c) -- (f);
\end{scope}
\end{tikzpicture} \]
Here we have applied the operation described in Step 3 of \cite[p.314]{Neumann}, followed by a blow-down, move R1 of \cite[p.304]{Neumann}.  (The ``$[-1]$'' indicates that the given vertex represents a disk bundle over $\RP^2$.)  Now $Y$ is not a lens space because it is not an instanton L-space, so by \cite[Corollary~5.7]{Neumann}, this is not the normal form plumbing graph of a Seifert fibered space.


\begin{example}
The one-cusped hyperbolic manifold $\texttt{m389}$ is identified by SnapPy \cite{SnapPy} as the complement of the knot $10_{139}$.  According to Dunfield's table of exceptional fillings \cite{Dunfield}, the Dehn filling
\[ \texttt{m389(0,1)} \cong \pm S^3_{13}(10_{139}) \cong \pm \Sigma_2(10_{154}) \]
is a graph manifold bounding the same plumbing tree as in Figure~\ref{fig:8.1}, except that one of the leaves on the left has weight $-3$ instead of $-2$.  This is also the link of an elliptic singularity, given in \cite[Table~2]{Laufer} by the dual graph $A_{2,****}$ with weights $A_*\cdot A_*=-2,-3,-3,-2$.\footnote{This singularity is given by the equation $z^2 = (y^2+x^3)^2+x^2y^3$.} Thus, another application of \cite[Proposition~4.2.2]{Nemethi} and Theorem \ref{thm:1.1} implies that $S^3_{13}(10_{139})$ is not an instanton L-space.  This clears up an ambiguity in \cite[Lemma~8.5]{BSconcordance}, which said that $\dim \Is(S^3_{13}(10_{139}))$ is either 13 or 15: the dimension must be 15, and $10_{139}$ is not an instanton L-space knot.
\end{example}


We end with a minor application of Corollary~\ref{cor:1.2} which may serve to illustrate how one can obtain further calculations. Let $L$ be a link in $S^3$. By work of Scaduto \cite[Theorem 1.1]{Scaduto}, there is a spectral sequence whose $E_2$ page is the reduced odd Khovanov homology of $L$ and which abuts to the framed instanton  homology of its branched double cover $\Sigma_2(L)$ (with reversed orientation). This collapses whenever the reduced odd Khovanov homology $\overline{\mathrm{Kh}}'(L)$ is \textit{thin} (for a definition and further discussion, see \cite[Section 1.2]{BSconcordance}). In \cite[Question 1.20]{BSconcordance}, Baldwin and Sivek consider the converse: is there  a link for which the reduced odd Khovanov homology is not thin, but the spectral sequence of \cite[Theorem 1.1]{Scaduto} still collapses at the $E_2$ page? In \cite[Theorem 1.21]{BSconcordance}, this was ruled out for all non-thin prime knots with at most ten crossings, with the possible exception of $10_{152}$. Here, we treat the final case $\Sigma_2(10_{152})$, which satisfies
\[ \dim \overline{\mathrm{Kh}}'(10_{152};\C) = 15, \]
and show that its spectral sequence does not collapse, as follows from:

\begin{proposition} \label{10_152}
 $\dim I^\#(\Sigma_2(10_{152})) = 13$.
\end{proposition}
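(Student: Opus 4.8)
The plan is to identify $\Sigma_2(10_{152})$, up to orientation, with the boundary of an almost-rational plumbing and then to transfer the dimension count to lattice homology. Following the template of the $10_{139}$ and $10_{154}$ examples above, the first step is to recognize $\Sigma_2(10_{152})$ as the link of a \emph{minimally elliptic} normal surface singularity appearing in Laufer's classification \cite{Laufer} (equivalently, $10_{152}$ is a Montesinos knot, so that $\Sigma_2(10_{152})$ is a Seifert fibered rational homology sphere with base $S^2$). By \cite[Proposition~4.2.2]{Nemethi}, such a link bounds an almost-rational plumbing forest $\Gamma$, so $\Sigma_2(10_{152}) \cong \pm Y_\Gamma$ and Theorem~\ref{thm:1.1} applies.

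The second step is to compute $\dim_\C \Hl(\Gamma)$. By Lemma~\ref{lem:isom}, $\dim_\C \Hl(\Gamma) = \operatorname{rank}_\Z(\ker U \subset \mathbb{H}^+(\Gamma))$, which counts the leaves of the graded roots of $\Gamma$ summed over the $|H_1(\Sigma_2(10_{152}))| = \det(10_{152}) = 11$ orbits of $\Char(\Gamma)$. Since the singularity is minimally elliptic ($p_g = 1$), N\'emethi's analysis of lattice cohomology in this case shows that all but one of these graded roots is a single ray and the remaining one has exactly two leaves; equivalently, $HF^+_{\mathrm{red}}(-Y_\Gamma; \C)$ is one-dimensional, which via Theorem~\ref{thm:7.1} and Lemma~\ref{lem:isom} again gives $\operatorname{rank}(\ker U) = |H_1| + 1$. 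Either way, $\dim_\C \Hl(\Gamma) = 12$.

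The final step is bookkeeping. By Theorem~\ref{thm:1.1}, $\dim_\C \Iseven(Y_\Gamma) = \dim_\C \Hl(\Gamma) = 12$, and Scaduto's Euler characteristic formula \eqref{eq:1.1} gives $\dim \Iseven(Y_\Gamma) - \dim \Isodd(Y_\Gamma) = |H_1| = 11$, whence $\dim \Isodd(Y_\Gamma) = 1$ and $\dim \Is(Y_\Gamma) = 13$. As reversing orientation does not change the total dimension of framed instanton homology, this yields $\dim \Is(\Sigma_2(10_{152})) = 13$. (Alternatively, one may invoke Corollary~\ref{cor:1.2} together with $\dim_\C \HFhat(\Sigma_2(10_{152}); \C) = |H_1| + 2\dim_\C HF^+_{\mathrm{red}} = 13$.)

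I expect the main obstacle to be the second step: establishing that $\dim_\C \Hl(\Gamma)$ is \emph{exactly} $12$, not merely greater than $11$. In the $10_{139}$ example an a priori constraint from \cite{BSconcordance} had already narrowed $\dim \Is$ to $\{13, 15\}$, so the inequality ``${}>|H_1|$'' used for $10_{154}$ sufficed; here the Khovanov bound only gives $\dim \Is(\Sigma_2(10_{152})) \in \{11, 13, 15\}$, and ruling out $15$ requires knowing the reduced lattice cohomology (equivalently $HF^+_{\mathrm{red}}$) is one-dimensional. This rests on correctly identifying the Seifert/singularity data of $\Sigma_2(10_{152})$ and on the structure theory of minimally elliptic singularities; the remaining steps are routine.
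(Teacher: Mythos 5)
Your proposal hinges on the claim in Step 1 that $10_{152}$ is a Montesinos knot, so that $\Sigma_2(10_{152})$ is a Seifert fibered rational homology sphere with base $S^2$ (equivalently, bounds a star-shaped, hence almost-rational, plumbing). This claim is incorrect. The paper identifies $\Sigma_2(10_{152})$, via SnapPy and Regina, as the graph manifold
\[
\texttt{SFS [D: (2,1) (3,1)] U/m SFS [D: (2,1) (3,1)], m = [ 0,1 | 1,0 ]},
\]
i.e.\ two Seifert fibered pieces over disks glued by a matrix that exchanges fiber and base directions; this is a non-Seifert-fibered graph manifold. (Relatedly, $\Sigma_2(10_{152})$ does not appear in Laufer's table as the link of a minimally elliptic singularity, unlike $\Sigma_2(10_{154})$ and the $10_{139}$-surgery manifold in the preceding examples.) Since you have not exhibited an almost-rational plumbing bounded by $\Sigma_2(10_{152})$, you cannot apply Theorem~\ref{thm:1.1} or Corollary~\ref{cor:1.2} directly, and the computation of $\dim\Hl(\Gamma)$ in Step 2---which rests on the minimally elliptic / $p_g=1$ structure---is unjustified. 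Everything downstream of Step 1 therefore collapses.

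The paper's actual argument sidesteps this by identifying $\Sigma_2(10_{152})$ (up to orientation) as the Dehn filling $\texttt{m038(-3,1)}$ of a one-cusped hyperbolic manifold, and applying the almost-rational theory not to $\Sigma_2(10_{152})$ itself but to two \emph{other} fillings, $\texttt{m038(-1,1)}$ and $\texttt{m038(-2,1)}$, which \emph{are} Seifert fibered over $S^2$. With $\dim \Is$ computed for those ($7$ and $10$) via Corollary~\ref{cor:1.2}, and $\Is(L(3,1))\cong\C^3$, a rank count in the surgery triangle for $(\texttt{m038(1,0)},\texttt{m038(-2,1)},\texttt{m038(-1,1)})$ forces $\Is(W_a,\Sigma_a)$ to be injective; then a standard positive-scalar-curvature/embedded-sphere vanishing argument (as in \cite[Lemma~3.1]{BSconcordance}) shows $\Is(W_a,\Sigma_a)\circ\Is(W_c,\Sigma_c)=0$, hence $\Is(W_c,\Sigma_c)=0$, and the triangle for $\texttt{m038(-3,1)}$ splits to give $\C^3\oplus\C^{10}\cong\C^{13}$. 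Your approach, were the input correct, would indeed be simpler; but the input is not correct, and a substitute for Step 1 is precisely what the paper's more involved surgery-triangle argument supplies.
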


\begin{proof}
The key observation is that $\Sigma_2(10_{152})$ is a Dehn filling of the one-cusped hyperbolic manifold \texttt{m038}: as verified using SnapPy and Regina \cite{Regina}, we have
\[ \Sigma_2(10_{152}) \cong \texttt{m038(-3,1)} \]
up to orientation, because both are recognized in Regina as the graph manifold
\[ \texttt{SFS [D: (2,1) (3,1)] U/m SFS [D: (2,1) (3,1)], m = [ 0,1 | 1,0 ]}. \]
Dunfield \cite{Dunfield} determined the other exceptional fillings of \texttt{m038}, which include
\begin{align*}
\texttt{m038(1,0)} &\cong L(3,1) \\
\texttt{m038(-1,1)} &\cong \texttt{SFS [S2:\ (2,1) (5,1) (5,-4)]} \\
\texttt{m038(-2,1)} &\cong \texttt{SFS [S2:\ (3,1) (4,1) (4,-3)]},
\end{align*}
and he computed that
\[
\dim_{\Z/2\Z} \HFhat(\texttt{m038(-1,1)}) = 7 \,\,\textrm{ and }\,\,
\dim_{\Z/2\Z} \HFhat(\texttt{m038(-2,1)}) = 10.
\]
As the latter two fillings are Seifert fibered rational homology spheres with base $S^2$, they bound almost-rational plumbings. Then, by Remark~\ref{torsion}, the  Heegaard Floer invariants over $\C$ have the same dimensions. By Corollary~\ref{cor:1.2}, we therefore have that   \[
\dim_{\C} \Is(\texttt{m038(-1,1)}) = 7 \,\,\textrm{ and }\,\,
\dim_{\C} \Is(\texttt{m038(-2,1)}) = 10
\] as well.
We have a surgery exact triangle of the form
\[ \dots \to I^\#(\texttt{m038(1,0)}) \xrightarrow{I^\#(W_a,\Sigma_a)} I^\#(\texttt{m038(-2,1)}) \to I^\#(\texttt{m038(-1,1)}) \to \dots, \]
with entries $\C^3$, $\C^{10}$, and $\C^7$, respectively, so that $I^\#(W_a,\Sigma_a)$ must be injective.  Likewise, we have a surgery exact triangle
\[ \dots \to I^\#(\texttt{m038(1,0)}) \to I^\#(\texttt{m038(-3,1)}) \to I^\#(\texttt{m038(-2,1)}) \xrightarrow{I^\#(W_c,\Sigma_c)} \dots, \]
and the composition
\[ I^\#(W_a,\Sigma_a) \circ I^\#(W_c,\Sigma_c): I^\#(\texttt{m038(-2,1)}) \to I^\#(\texttt{m038(-2,1)}) \]
is zero, because the cobordism $W_c \cup_{\texttt{m038(1,0)}} W_a$ contains a smoothly embedded 2-sphere of self-intersection zero which is not in the kernel of the intersection form, exactly as in \cite[Lemma~3.1]{BSconcordance}.  Since $I^\#(W_a,\Sigma_a)$ is injective, it follows that $I^\#(W_c,\Sigma_c)$ must be identically zero and so we have a splitting
\begin{align*}
I^\#(\texttt{m038(-3,1)}) &\cong I^\#(\texttt{m038(1,0)}) \oplus I^\#(\texttt{m038(-2,1)}) \\
&\cong \C^3 \oplus \C^{10} \cong \C^{13}.
\end{align*}
Thus $I^\#(\Sigma_2(10_{152})) \cong \C^{13}$, as claimed.
\end{proof}


\appendix
\section{Invariance of the group $\Hl(\Gamma)$} \label{sec:appendix}
In this section, we show that the isomorphism class of $\Hl(\Gamma)$ is invariant under blowing down a leaf with framing $-1$. More importantly, we prove that this invariance is compatible (in an appropriate sense) with the map $\Ts$. Let $\Gamma$ be a negative-definite plumbing forest, and let $v$ be a fixed vertex in $\Gamma$ with decoration $-p<0$. Consider the three related plumbings:
\begin{enumerate}
\item Define $\Gamma_{+1}$ by increasing the framing of $v$ by one, so that it has framing $-p + 1$.
\item Define $\Gamma_{+1} \cup \{x\}$ by taking the disjoint union of $\Gamma_{+1}$ with a single isolated vertex (denoted by $x$) of framing $-1$.
\item Define $\Gamma'$ by adding a new vertex (which we also denote by $x$) to $\Gamma$ with framing $-1$. This new vertex is adjacent to $v$ and no other vertices.
\end{enumerate}
See Figure~\ref{fig:A.1}. We furthermore assume that all of these new plumbings are still negative-definite. Clearly, $W_{\Gamma'}$ and $W_{\Gamma_{+1} \cup \{x\}}$ are diffeomorphic, and are both just the blow-up of $W_{\Gamma_{+1}}$. Thus, all three boundaries are the same. It is easy to see, moreover, that the multicurves $\lambda_{\Gamma'}$, $\lambda_{\Gamma_{+1} \cup \{x\}}$, and $\lambda_{\Gamma_{+1}}$ are homologous. Hence, we have canonical identifications
\[
\Is(Y_{\Gamma'}, \lambda_{\Gamma'}) \cong \Is(Y_{\Gamma_{+1} \cup \{x\}}, \lambda_{\Gamma_{+1} \cup \{x\}}) \cong \Is(Y_{\Gamma_{+1}}, \lambda_{\Gamma_{+1}}),
\]
as described in Remark~\ref{rem:tautological}.

\begin{figure}[t]
\center
\includegraphics[scale=0.8]{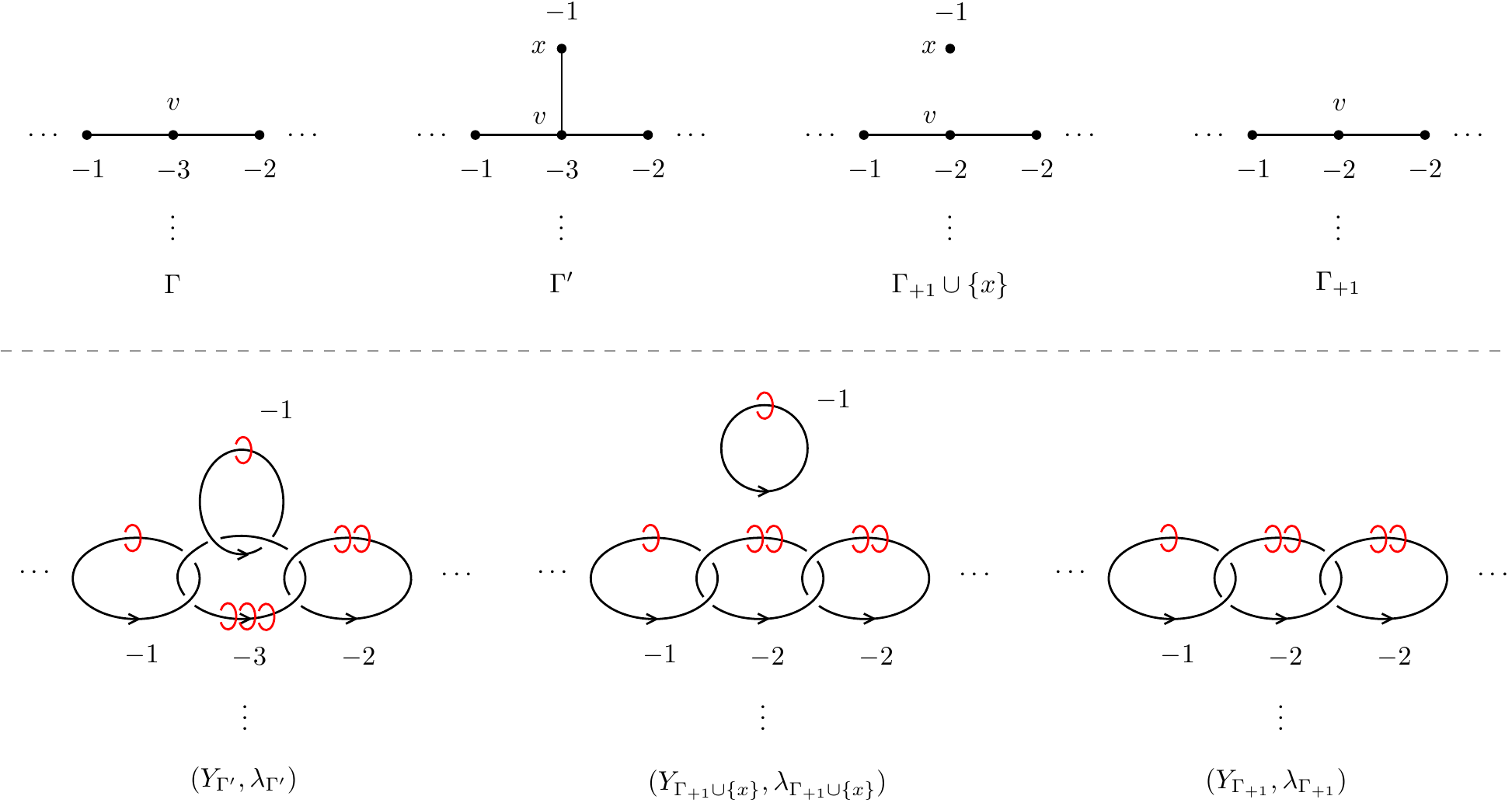}
\caption{Top: plumbing diagrams corresponding to $\Gamma'$, $\Gamma_{+1} \cup \{x\}$, and $\Gamma_{+1}$. Bottom: the multicurves. Each meridian is positively-oriented with respect to the corresponding unknot.}\label{fig:A.1}
\end{figure}

We now define isomorphisms 
\[
\mathbb{F}: \Hl(\Gamma') \rightarrow \Hl(\Gamma_{+1} \cup \{x\}) \text{ and }\ \mathbb{G}:  \Hl(\Gamma_{+1} \cup \{x\})  \rightarrow \Hl(\Gamma_{+1})
\]
between the corresponding lattice homologies. First, let \[\bar{\mathbb{F}}:\Char(\Gamma')\to\Char(\Gamma_{+1} \cup \{x\}) \]  be the map which sends an element $k\in\Char(\Gamma')$  to the characteristic element $k'$ on $\Gamma_{+1} \cup \{x\}$ given by
\[
\ev{k'}{w} =
\begin{cases}
\ev{k}{w} & \text{for } w \neq v \\
\ev{k}{w} - \ev{k}{x} & \text{for } w = v.
\end{cases}
\]
Note that this is just the map induced by the basis change $v \mapsto v - x$, which corresponds to the handleslide inducing the diffeomorphism $W_{\Gamma'} \cong W_{\Gamma_{+1} \cup \{x\}}$. We likewise define a map \[\bar{\mathbb{G}}:\Char(\Gamma_{+1} \cup \{x\})\to \Char(\Gamma_{+1})\] as follows. Let $k$ be an element of $\Char(\Gamma_{+1} \cup \{x\})$. Set
\[
\bar{\mathbb{G}}(k) = 
\begin{cases}
1 \otimes k|_{W_{\Gamma_{+1}}} & \text{if } \ev{k}{x} = 1 \\
(-1) \otimes k|_{W_{\Gamma_{+1}}} & \text{if } \ev{k}{x} = - 1 \\
0 & \text{otherwise},
\end{cases}
\]
where $\smash{\Gamma_{+1}}$ is viewed as a subgraph of $\smash{\Gamma_{+1} \cup \{x\}}$ in the obvious way, and $\smash{k|_{W_{\Gamma_{+1}}}}$ is the restriction of $k$ to the span of vertices in this subgraph.

\begin{lemma}\label{lem:A.1}
The maps $\bar{\mathbb{F}}$ and $\bar{\mathbb{G}}$ descend to well-defined maps
\[
\mathbb{F}: \Hl(\Gamma') \rightarrow \Hl(\Gamma_{+1} \cup \{x\}) \text{ and }\ \mathbb{G}:  \Hl(\Gamma_{+1} \cup \{x\})  \rightarrow \Hl(\Gamma_{+1}).
\]
\end{lemma}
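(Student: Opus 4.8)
The plan is to verify directly that the combinatorial maps $\bar{\mathbb{F}}$ and $\bar{\mathbb{G}}$ carry each generating relation of Definition~\ref{def:3.1} on the source to a relation (or a $\mathbb{C}$-linear combination of relations) on the target, so that each descends to the quotient. It helps to observe at the outset that $\bar{\mathbb{F}}$ is nothing but the vector-space isomorphism $V_{\Gamma'}\to V_{\Gamma_{+1}\cup\{x\}}$ induced by the lattice isometry $H_2(W_{\Gamma'})\to H_2(W_{\Gamma_{+1}\cup\{x\}})$ coming from the handleslide $v\mapsto v-x$ (equivalently, from the diffeomorphism $W_{\Gamma'}\cong W_{\Gamma_{+1}\cup\{x\}}$); in particular it sends characteristic vectors to characteristic vectors and intertwines the two $H_2$-actions. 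The map $\bar{\mathbb{G}}$ merely records the $W_{\Gamma_{+1}}$-component of a characteristic vector, with a sign read off from $\ev{k}{x}\in\{+1,-1\}$. The main obstacle will be the Type~II relation at the distinguished vertex $v$ in the analysis of $\bar{\mathbb{F}}$: since $v$ has framing $-p$ in $\Gamma'$ but $-p+1$ in $\Gamma_{+1}\cup\{x\}$, this relation does not transport to a single relation on the target, and must be reconstructed from several.

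For $\bar{\mathbb{F}}$, the checks at a vertex $w\neq v,x$ are immediate: $w$ has the same framing and the same neighbours in $\Gamma'$ and in $\Gamma_{+1}\cup\{x\}$, and $\bar{\mathbb{F}}$ fixes $\ev{\cdot}{w}$ and the Poincar\'e dual $w^*$, so the relations at $w$ are carried identically. At the new leaf $x$ (with $x^2=-1$ in both graphs) a short computation shows that the shift $\ev{\bar{\mathbb{F}}(k)}{v}=\ev{k}{v}-\ev{k}{x}$ precisely absorbs the discrepancy between $(x,v)_{\Gamma'}=-1$ and $(x,v)_{\Gamma_{+1}\cup\{x\}}=0$, so $\bar{\mathbb{F}}$ sends the relations at $x$ in $\Gamma'$ to the relations at the now-isolated vertex $x$ in $\Gamma_{+1}\cup\{x\}$, with the factor $(-1)^{x^2}$ unchanged. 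The remaining, substantive, case is the relation at $v$. Since $\ev{\cdot}{x}$ is preserved by $\bar{\mathbb{F}}$, if $|\ev{k}{x}|>1$ then $1\otimes k$, $\bar{\mathbb{F}}(k)$, and the relevant right-hand sides already vanish by the Type~I relation at $x$, so we may assume $\ev{k}{x}=\pm1$. For a Type~I relation at $v$ (so $|\ev{k}{v}|>p$, hence $|\ev{k}{v}|\ge p+2$ by parity), one checks $|\ev{\bar{\mathbb{F}}(k)}{v}|=|\ev{k}{v}\mp1|\ge p+1>p-1$, so $\bar{\mathbb{F}}(k)\sim 0$ by the Type~I relation at $v$ in $\Gamma_{+1}\cup\{x\}$. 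For a Type~II relation at $v$ (so $\ev{k}{v}=\pm p$): if the sign of $\ev{k}{x}$ is opposite to that of $\ev{k}{v}$, then $|\ev{\bar{\mathbb{F}}(k)}{v}|=p+1$, so $\bar{\mathbb{F}}(k)\sim 0$ by Type~I at $v$, while the right-hand side $k\mp 2v^{*}$ of the $\Gamma'$-relation has $x$-value $\pm 3$ and so $\bar{\mathbb{F}}(k\mp 2v^{*})\sim 0$ by Type~I at $x$; if the signs agree, then $\bar{\mathbb{F}}(k)$ is extremal at $v$ in $\Gamma_{+1}\cup\{x\}$, and applying the Type~II relation at $v$ there, followed by the Type~II relation at the isolated vertex $x$, produces exactly $\bar{\mathbb{F}}$ of the right-hand side of the $\Gamma'$-relation. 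The signs match because $(-1)^{v^2}$ computed in $\Gamma'$ and in $\Gamma_{+1}\cup\{x\}$ differ by $-1$, which is cancelled by the factor $(-1)^{x^2}=-1$ from the extra Type~II relation at $x$.

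For $\bar{\mathbb{G}}$, the point is that $x$ is isolated in $\Gamma_{+1}\cup\{x\}$: any relation of Definition~\ref{def:3.1} attached to a vertex $u\neq x$ leaves $\ev{\cdot}{x}$ unchanged and leaves $u^{*}$ restricted to $W_{\Gamma_{+1}}$ equal to the Poincar\'e dual of $u$ in $\Gamma_{+1}$; hence $\bar{\mathbb{G}}$ either annihilates both sides (when $\ev{k}{x}\neq\pm1$) or multiplies both by the same sign, and restricts the relation to the corresponding Type~I or Type~II relation at $u$ in $\Gamma_{+1}$. A Type~I relation at $x$ occurs only when $|\ev{k}{x}|>1$, so $\ev{k}{x}\neq\pm1$ and $\bar{\mathbb{G}}(k)=0$; a Type~II relation at $x$ simply interchanges $\ev{k}{x}=+1$ and $\ev{k}{x}=-1$ while leaving $k|_{W_{\Gamma_{+1}}}$ unchanged, which is exactly matched by the coefficient $(-1)^{x^2}=-1$. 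This establishes that $\bar{\mathbb{F}}$ and $\bar{\mathbb{G}}$ descend to $\mathbb{F}$ and $\mathbb{G}$. (These are moreover isomorphisms: for $\mathbb{F}$, the inverse handleslide $v\mapsto v+x$ on $\Gamma_{+1}\cup\{x\}$ descends by the same argument and provides a two-sided inverse; for $\mathbb{G}$, one checks that $1\otimes k\mapsto 1\otimes k^{+}$, where $k^{+}$ extends $k$ with $\ev{k^{+}}{x}=1$, is a two-sided inverse, or one invokes Lemma~\ref{lem:isom} and a dimension count.)
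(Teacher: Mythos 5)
Your proof is correct and takes essentially the same approach as the paper's: check that each elementary Type I and Type II relation is carried to a consequence of the relations on the target, reducing to the substantive case of the Type II relation at $v$ and using an extra Type II relation at the isolated vertex $x$ to repair the $x$-value, with the sign $(-1)^{x^2}=-1$ exactly cancelling the change from $(-1)^{v^2}$ in $\Gamma'$ to $(-1)^{v^2}$ in $\Gamma_{+1}\cup\{x\}$. Your case analysis for the $v$-relation (organizing by whether the signs of $\ev{k}{v}$ and $\ev{k}{x}$ agree) is slightly more systematic than the paper's, which instead dismisses the ``opposite sign'' possibility by first applying the $x$-relation in the source; and you spell out the $\bar{\mathbb{G}}$ verification that the paper calls immediate — both are fine.
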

\begin{proof}
We show that $\bar{\mathbb{F}}$ respects the relations of Definition~\ref{def:3.1}. Let $k$ be an element of $\Char(\Gamma')$ and let $u$ be a fixed vertex in $\Gamma'$ with framing $-q$. Suppose that $k \sim 0$ due to a relation of Type I associated to $u$. If $u \neq v$, then $\ev{k'}{u} = \ev{k}{u}$ and the framing of $u$ in $\Gamma_{+1} \cup \{x\}$ is still $-q$. Hence, we clearly have $k' \sim 0$. Thus, suppose $u = v$. If $\ev{k}{x} \neq \pm 1$ then we are in the previous case, so we may assume that $\ev{k'}{u} = \ev{k}{u} \mp 1$. Since the framing of $v$ changes by one from $\Gamma'$ to $\Gamma_{+1} \cup \{x\}$, this easily implies that once again $k' \sim 0$. Hence $\bar{\mathbb{F}}$ respects relations of Type I.

Now consider relations of Type II. Suppose $\ev{k}{u} = -q$, and write $
s = k - 2(u, -)_{\Gamma'}$. Then $k \sim (-1)^q \otimes s$, so we must show that 
\[
\bar{\mathbb{F}}(k) \sim (-1)^{q} \otimes \bar{\mathbb{F}}(s). 
\]
We subdivide into several cases. First suppose that $u \neq v$. Then the left-hand side of the desired relation is
\begin{equation}\label{eq:A.1}
k' \sim (-1)^q \otimes (k' - 2(u, -)_{\Gamma_{+1} \cup \{x\}})
\end{equation}
by a relation of Type II associated to $u$. We claim the characteristic elements $k' - 2(u, -)_{\Gamma_{+1} \cup \{x\}}$ and $s'$ are equal. Indeed, taking their evaluation on an arbitrary vertex $w$, we have
\begin{equation}\label{eq:A.2}
\ev{k'}{w} - 2(u, w)_{\Gamma_{+1} \cup \{x\}} =
\begin{cases}
\ev{k}{w} - 2(u, w)_{\Gamma_{+1} \cup \{x\}} & \text{for } w \neq v \\
\ev{k}{w} - \ev{k}{x} - 2(u, w)_{\Gamma_{+1} \cup \{x\}} & \text{for } w = v
\end{cases}
\end{equation}
and
\begin{align}\label{eq:A.3}
\ev{s'}{w} &= 
\begin{cases}
\ev{s}{w} & \text{for } w \neq v \\
\ev{s}{w} - \ev{s}{x} & \text{for } w = v
\end{cases} \nonumber \\
&=
\begin{cases}
\ev{k}{w} - 2(u, w)_{\Gamma'} & \text{for } w \neq v \\
(\ev{k}{w} - 2(u, w)_{\Gamma'}) - (\ev{k}{x} - 2(u, x)_{\Gamma'}) & \text{for } w = v.
\end{cases}
\end{align}
If $u \neq x$ (in addition to $u \neq v$), then $\smash{(u, -)_{\Gamma_{+1} \cup \{x\}} = (u, -)_{\Gamma'}}$ and $(u, x)_{\Gamma'} = 0$. Comparing \eqref{eq:A.2} and \eqref{eq:A.3} then gives the claim. If on the other hand $u = x$, then the identity $\smash{(u, w)_{\Gamma_{+1} \cup \{x\}} = (u, w)_{\Gamma'}}$ holds as long as $w \neq v$. In this case, it remains to explicitly check
\[
\ev{k'}{v} - 2(x, v)_{\Gamma_{+1} \cup \{x\}} = \ev{k}{v} - \ev{k}{x} - 0 = \ev{k}{v} - \ev{k}{x}
\]
and
\begin{align*}
\ev{s'}{v} &= (\ev{k}{v} - 2(x, v)_{\Gamma'}) - (\ev{k}{x} - 2(x,x)_{\Gamma'}) \\
&= \ev{k}{v} + 2 - \ev{k}{x} -2 \\
&= \ev{k}{v} - \ev{k}{x}.
\end{align*}
Hence in both situations, $\smash{k' - 2(u, -)_{\Gamma_{+1} \cup \{x\}}}$ and $s'$ are equal. Combining this with \eqref{eq:A.1} gives the desired relation between $\bar{\mathbb{F}}(k)$ and $\bar{\mathbb{F}}(s)$ in the case that $u \neq v$.

Now suppose $u = v$, so that $\ev{k}{v} = -p$. As in the first paragraph, we may assume $\ev{k}{x} = \pm 1$. Moreover, if $\ev{k}{x} = +1$, then we have $k \sim k + 2(x, -)_{\Gamma'}$ via a relation of Type II associated to $x$. But this new characteristic element is related to zero, since its evaluation on $v$ is $-p - 2$. Hence we may assume $\ev{k}{x} = -1$. Then $\ev{k'}{v} = \ev{k}{v} - \ev{k}{x} = -p + 1$, which is the decoration of $v$ in $\Gamma_{+1} \cup \{x\}$. By a relation of Type II, we thus have
\begin{equation}\label{eq:A.4}
k' \sim (-1)^{p-1} \otimes (k' - 2(v, -)_{\Gamma_{+1} \cup \{x\}}).
\end{equation}
As before, we compare $\smash{k' - 2(v, -)_{\Gamma_{+1} \cup \{x\}}}$ and $s'$. Examining \eqref{eq:A.2} and \eqref{eq:A.3}, it is again easy to see that these agree for all vertices $w$ other than $v$ or $x$. Evaluating on $v$, we check
\[
\ev{k'}{v} - 2(v, v)_{\Gamma_{+1} \cup \{x\}} = p - 1 
\]
and 
\[
\ev{s'}{v} = (\ev{k}{v} - 2(v, v)_{\Gamma'}) - (\ev{k}{x} - 2(v,x)_{\Gamma'}) = p - 1.
\]
However, evaluating on $x$, we have
\[
\ev{k'}{x} - 2(v,x)_{\Gamma_{+1} \cup \{x\}} = \ev{k}{x} - 0 = -1
\]
while
\[
\ev{s'}{x} = \ev{s}{x} = \ev{k}{x} - 2(u, x)_{\Gamma'} = -1 + 2 = 1.
\]
Hence in this case $k' - 2(v, -)_{\Gamma_{+1} \cup \{x\}}$ and $s'$ are \textit{not} equal, as they differ on $x$. Instead, we have
\begin{equation}\label{eq:A.5}
k' - 2(v,-)_{\Gamma_{+1} \cup \{x\}} \sim (-1) \otimes s'
\end{equation}
via a relation of Type II associated to $x$. (Here, we are using the fact that $x$ is an isolated vertex in $\Gamma_{+1} \cup \{x\}$.) Combining \eqref{eq:A.4} and \eqref{eq:A.5} gives the desired relation between $\bar{\mathbb{F}}(k)$ and $\bar{\mathbb{F}}(s)$. 

The case when $\ev{k}{u} = q$ is analogous. This shows that $\bar{\mathbb{F}}$ descends to a map between the relevant lattice homologies. The result for $\bar{\mathbb{G}}$ is immediate.
\end{proof}

\begin{lemma}\label{lem:A.2}
The maps $\mathbb{F}$ and $\mathbb{G}$ are isomorphisms, and the diagrams
\begin{center}
\begin{tikzpicture}[scale=1]
\node (A) at (0,1.75) {$\Is(Y_{\Gamma'}, \lambda_{\Gamma'}) \ \cong$};
\node (B) at (3.35,1.72) {$\Is(Y_{\Gamma_{+1} \cup \{x\}}, \lambda_{\Gamma_{+1} \cup \{x\}})$};
\node (F) at (0,0) {$\Hl(\Gamma')$};
\node (G) at (3.35,0) {$\Hl(\Gamma_{+1} \cup \{x\})$};
\path[->,font=\scriptsize,>=angle 90]

(F) edge node[left]{$\Ts$} (A)
(G) edge node[left]{$\Ts$} (B)
(F) edge node[above]{$\mathbb{F}$} (G);
\end{tikzpicture}
\hspace{-0.2cm}
\begin{tikzpicture}[scale=1]
\node (A) at (0,0) {};
\node (B) at (0,1) {\text{ and }};
\end{tikzpicture}
\hspace{-0.2cm}
\begin{tikzpicture}[scale=1]
\node (B) at (0,1.72) {$\Is(Y_{\Gamma_{+1} \cup \{x\}}, \lambda_{\Gamma_{+1} \cup \{x\}}) \ \cong$};
\node (C) at (3.6,1.74) {$\Is(Y_{\Gamma_{+1}}, \lambda_{\Gamma_{+1}})$};
\node (G) at (0,0) {$\Hl(\Gamma_{+1} \cup \{x\})$};
\node (H) at (3.6,0) {$\Hl(\Gamma_{+1})$};
\path[->,font=\scriptsize,>=angle 90]

(G) edge node[left]{$\Ts$} (B)
(H) edge node[left]{$\Ts$} (C)
(G) edge node[above]{$\mathbb{G}$} (H);
\end{tikzpicture}
\end{center}
commute.
\end{lemma}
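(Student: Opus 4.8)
The plan is to separate the two assertions. The claims that $\mathbb{F}$ and $\mathbb{G}$ are isomorphisms are purely combinatorial: their well-definedness is Lemma~\ref{lem:A.1}, so it only remains to produce two-sided inverses. The map $\bar{\mathbb{F}}$ is dual to the lattice isometry between $(H_2(W_{\Gamma'}),(-,-)_{\Gamma'})$ and $(H_2(W_{\Gamma_{+1}\cup\{x\}}),(-,-)_{\Gamma_{+1}\cup\{x\}})$ induced by the handleslide that blows down the $(-1)$-leaf $x$; one checks directly (using that $x$ becomes disjoint from $v$ after the slide, and keeping track of our $-1$ edge convention) that this is an isometry carrying preferred basis to preferred basis, hence carries the Type~I and Type~II relations of $\Gamma'$ bijectively onto those of $\Gamma_{+1}\cup\{x\}$, and the reverse handleslide induces a two-sided inverse. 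For $\mathbb{G}$: since $x$ is an isolated vertex of framing $-1$, each characteristic vector on $\Gamma_{+1}\cup\{x\}$ restricts to a characteristic vector on $\Gamma_{+1}$ together with the odd value $\ev{k}{x}$; the Type~I relation kills the generators with $|\ev{k}{x}|\geq 3$, and the Type~II relation at $x$ identifies $1\otimes k_{x=-1}$ with $-1\otimes k_{x=+1}$ whenever the two characteristic vectors agree off $x$. Hence the assignment sending $k_0$ to its extension with $\ev{\cdot}{x}=1$ descends to a map $\Hl(\Gamma_{+1})\to\Hl(\Gamma_{+1}\cup\{x\})$ which is a two-sided inverse of $\mathbb{G}$; in particular $\Hl(\Gamma_{+1}\cup\{x\})\cong\Hl(\Gamma_{+1})$.

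For the $\mathbb{F}$-square: the handleslide is realized by a diffeomorphism $\phi\colon W_{\Gamma'}\xrightarrow{\ \cong\ }W_{\Gamma_{+1}\cup\{x\}}$ whose boundary restriction carries $\lambda_{\Gamma'}$ to a multicurve homologous to $\lambda_{\Gamma_{+1}\cup\{x\}}$, which carries $D_{\Gamma'}$ to a properly embedded surface homologous rel boundary to $D_{\Gamma_{+1}\cup\{x\}}$, and which carries each characteristic vector $k$ to $\bar{\mathbb{F}}(k)$. Since framed instanton cobordism maps are natural under diffeomorphism and depend on the surface decoration only through its class in $H_2(W,\partial W;\Z)$ (Remark~\ref{rmk:dependence}), and since the horizontal isomorphisms in the square are the tautological ones of Remark~\ref{rem:tautological}, this square commutes; the point to watch is that $\phi_*D_{\Gamma'}$ and $D_{\Gamma_{+1}\cup\{x\}}$ represent the same \emph{integral} relative homology class (not merely the same mod-$2$ class), so that no spurious sign is introduced.

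For the $\mathbb{G}$-square: $W_{\Gamma_{+1}\cup\{x\}}\cong W_{\Gamma_{+1}}\#\overline{\CPr}^2$, with the $x$-handle attached along an unknot split from the rest, so that $W_{\Gamma_{+1}\cup\{x\}}=W_{\Gamma_{+1}}\cup_{Y_{\Gamma_{+1}}}W_1$, where $W_1$ is the split $(-1)$-framed $2$-handle cobordism (the $p=1$ one-vertex cobordism of Example~\ref{ex:3.3}, inside a ball), and $D_{\Gamma_{+1}\cup\{x\}}$ restricts to $D_{\Gamma_{+1}}$ and to the standard disk $D_1$. By the composition law of Theorem~\ref{thm:decomposition} --- the summands $H_2(W_{\Gamma_{+1}})$ and $H_2(W_1)$ being orthogonal, so that a characteristic vector on $W_{\Gamma_{+1}\cup\{x\}}$ is the unique gluing of its two restrictions --- together with naturality of cobordism maps under split cobordisms \cite[Section~7.7]{Scaduto}, one obtains
\[
\Ts(1\otimes k)=\Is(W_{\Gamma_{+1}},D_{\Gamma_{+1}};k|_{W_{\Gamma_{+1}}})\bigl(\Is(W_1,D_1;k|_{W_1})(x_0)\bigr),
\]
so everything reduces to evaluating $\Is(W_1,D_1;k_{\pm1})(x_0)$. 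This is exactly the $p=1$ base case treated in the proof of Lemma~\ref{lem:4.3}: via the blow-up formula and the sign relation of Theorem~\ref{thm:decomposition}, this cobordism map vanishes when $|\ev{k}{x}|\neq 1$ and, when $\ev{k}{x}=\pm1$, is a fixed scalar multiple of the tautological image of $x_0$ with sign governed by the choice of $\pm$. Feeding this into the displayed formula, composing with the tautological identification of the boundary, and comparing with $\Ts\circ\mathbb{G}$ gives commutativity of the $\mathbb{G}$-square.

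I expect the main obstacle to be the sign and normalization bookkeeping in the $\mathbb{G}$-square: one must verify that the scalar and sign produced by the blow-up and sign relations --- applied either to $W_1$ directly, or to $W_{\Gamma_{+1}}\#\overline{\CPr}^2$ with decoration $D_{\Gamma_{+1}}\cup(\text{meridian disk of }x)$, recalling that this meridian disk is homologous to $\pm$ the exceptional sphere rather than to zero --- match the coefficients appearing in the definition of $\bar{\mathbb{G}}$. This is the appendix analogue of the $\pm\tfrac12$ computation carried out in the proof of Lemma~\ref{lem:5.3}. A secondary point is the integral (not merely mod-$2$) identification of the surface decorations under the handleslide diffeomorphism needed for the $\mathbb{F}$-square.
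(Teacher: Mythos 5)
Your proposal is correct and follows essentially the same route as the paper: explicit inverses for $\mathbb{F}$ (the reverse handleslide) and $\mathbb{G}$ (your $k_0\mapsto k^+$ agrees with the paper's $\tfrac12 k^+ - \tfrac12 k^-$ modulo the Type~II relation at $x$), commutativity of the $\mathbb{F}$-square via naturality under the handleslide diffeomorphism and the integral rel-boundary homology identification of disk systems, and commutativity of the $\mathbb{G}$-square via the blow-up formula and sign relation of Theorem~\ref{thm:decomposition}. The one small inconsistency is cosmetic: you write the decomposition as $W_{\Gamma_{+1}}\cup_{Y_{\Gamma_{+1}}} W_1$ but then compose in the order $\Is(W_{\Gamma_{+1}},\cdot)\circ\Is(W_1,\cdot)$, which presupposes the other gluing $W_1\cup_{S^3}W_{\Gamma_{+1}}$; either ordering works, but pick one.
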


\begin{proof}
It is straightforward to construct an inverse to $\mathbb{F}$ on the level of characteristic elements; this sends $k \in \Char(\Gamma_{+1} \cup \{x\})$ to the element $k' \in \Char(\Gamma')$ defined by
\[
\ev{k'}{w} =
\begin{cases}
\ev{k}{w} & \text{for } w \neq v \\
\ev{k}{w} + \ev{k}{x} & \text{for } w = v.
\end{cases}
\]
The proof that this descends to a map on lattice homology is similar to that of Lemma~\ref{lem:A.1}. The fact that the left-hand square commutes is a consequence of the following two facts:
\begin{enumerate}
\item The basis change $v \mapsto v - x$ corresponds to the handleslide inducing the diffeomorphism $W_{\Gamma'} \cong W_{\Gamma_{+1} \cup \{x\}}$; and,
\item The disk systems $D_{\Gamma'}$ and $\smash{D_{\Gamma_{+1} \cup \{x\}}}$ coincide under the identification of $\Is(Y_{\Gamma'}, \lambda_{\Gamma'})$ with $\smash{\Is(Y_{\Gamma_{+1} \cup \{x\}}, \lambda_{\Gamma_{+1} \cup \{x\}})}$ afforded by Remark~\ref{rem:tautological}.
\end{enumerate}
It is likewise straightforward to verify that $\mathbb{G}$ is an isomorphism. On the level of characteristic elements, the inverse is given by sending $k \in \Char(\Gamma_{+1})$ to $(1/2) \otimes k^+ + (-1/2) \otimes k^-$, where $k^{\pm} \in \Char(\Gamma_{+1} \cup \{x\})$ are defined by
\[
\ev{k^\pm}{w} =
\begin{cases}
\ev{k}{w} & \text{for } w \neq x \\
\pm 1 & \text{for } w = x.
\end{cases}
\]
To see that the right-hand square commutes, observe that $W_{\Gamma_{+1} \cup \{x\}}$ is the blow-up of $W_{\Gamma_{+1}}$. Moreover, under the isomorphism
\[\smash{\Is(Y_{\Gamma_{+1} \cup \{x\}}, \lambda_{\Gamma_{+1} \cup \{x\}})} \cong \smash{\Is(Y_{\Gamma_{+1}}, \lambda_{\Gamma_{+1}})},\] the disk system $\smash{D_{\Gamma_{+1} \cup \{x\}}}$ is homologous to the standard disk system $D_{\Gamma_{+1}}$ coming from $W_{\Gamma_{+1}}$, union a single copy of the exceptional divisor. Applying the sign relation of Theorem \ref{thm:decomposition} and the blow-up formula then gives the claim.
\end{proof}


\bibliographystyle{amsalpha}
\bibliography{bib}

\end{document}